\date{\today}
\newtheorem{theorem}{Theorem}[section]
\newtheorem{proposition}[theorem]{Proposition}
\newtheorem{corollary}[theorem]{Corollary}
\theoremstyle{definition}
\newtheorem{example}[theorem]{Example}
\newtheorem{remark}[theorem]{Remark}
\newtheorem{definition}[theorem]{Definition}
\begin{document}

\title[On a topological simple Warne extension of a semigroup]{On
a topological simple Warne extension of a semigroup}

\author[Iryna~Fihel]{Iryna~Fihel}
\address{Department of Mechanics and Mathematics, Ivan Franko
National University of Lviv, Universytetska 1, Lviv, 79000, Ukraine}
\email{figel.iryna@gmail.com}

\author[Oleg~Gutik]{Oleg~Gutik}
\address{Department of Mechanics and Mathematics, Ivan Franko
National University of Lviv, Universytetska 1, Lviv, 79000, Ukraine}
\email{{o\_gutik}@franko.lviv.ua, {ovgutik}@yahoo.com}

\author[K. Pavlyk]{Kateryna~Pavlyk}
\address{Institute of Mathematics,
University of Tartu, J. Liivi 2, 50409, Tartu, Estonia}
\email{kateryna.pavlyk@ut.ee}

\keywords{Topological semigroup, semitopological semigroup,
topological inverse semigroup, bisimple semigroup.
 }
\subjclass[2010]{22A15, 54H15}

\begin{abstract}
In the paper we introduce topological $\mathbb{Z}$-Bruck-Reilly and
topological $\mathbb{Z}$-Bruck extensions of (semi)topological
monoids which are generalizations of topological Bruck-Reilly and
topological Bruck extensions of (semi)topological monoids and study
their topologizations. The sufficient conditions under which the
topological $\mathbb{Z}$-Bruck-Reilly ($\mathbb{Z}$-Bruck) extension
admits only the direct sum topology and conditions under which the
direct sum topology can be coarsened are given. Also, topological
characterizations of some classes of $I$-bisimple (semi)topo\-lo\-gi\-cal
semigroups are given.
\end{abstract}

\maketitle

\section{Introduction and preliminaries}

In this paper all topological spaces are assumed to be Hausdorff. We
shall follow the terminology of~\cite{CHK, CP, Engelking1989,
Ruppert1984}. If $Y$ is a subspace of a topological space $X$ and
$A\subseteq Y$, then by $\operatorname{cl}_Y(A)$ we shall denote the
topological closure of $A$ in $Y$. Later by $\mathbb{N}$ we denote
the set of positive integers. Also for a map $\theta\colon
X\rightarrow Y$ and positive integer $n$ we denote by
$\theta^{-1}(A)$ and $\theta^{n}(B)$ the full preimage of the set
$A\subseteq Y$ and the $n$-power image of the set $B\subseteq X$,
respectively, i.e., $\theta^{-1}(A)=\{x\in X\colon \theta(x)\in A\}$
and $\theta^{n}(B)=
\{(\underbrace{\theta\circ\ldots\circ\theta}_{n-\hbox{time}})(x)\colon
x\in B\}$.

A semigroup $S$ is \emph{regular} if $x\in xSx$ for every $x\in S$.
A semigroup $S$ is called {\it inverse} if for any element $x\in S$
there exists the unique $x^{-1}\in S$ such that $xx^{-1}x=x$ and
$x^{-1}xx^{-1}=x^{-1}$. The element $x^{-1}$ is called the {\it
inverse of} $x\in S$. If $S$ is an inverse semigroup, then the
function $\operatorname{inv}\colon S\to S$ which assigns to every
element $x$ of $S$ its inverse element $x^{-1}$ is called an {\it
inversion}. An inverse semigroup $S$ is said to be \emph{Clifford}
if $x\cdot x^{-1}=x^{-1}\cdot x$ for all $x\in S$.

If $S$ is a semigroup, then we shall denote the subset of
idempotents in $S$ by $E(S)$. If $S$ is an inverse semigroup, then
$E(S)$ is closed under multiplication and we shall refer to $E(S)$ as a
\emph{band} (or the \emph{band of} $S$). If the band $E(S)$ is a
non-empty subset of $S$, then the semigroup operation on $S$
determines the following partial order $\leqslant$ on $E(S)$:
$e\leqslant f$ if and only if $ef=fe=e$. This order is called the
{\em natural partial order} on $E(S)$. A \emph{semilattice} is a
commutative semigroup of idempotents. A semilattice $E$ is called
{\em linearly ordered} or a \emph{chain} if its natural order is a
linear order. If $E$ is a semilattice and $e\in E$ then we denote
${\downarrow} e=\{ f\in E\mid f\leqslant e\}$ and ${\uparrow} e=\{
f\in E\mid e\leqslant f\}$.

If $S$ is a semigroup, then by $\mathscr{R}$, $\mathscr{L}$, $\mathscr{J}$, $\mathscr{D}$ and $\mathscr{H}$ we shall denote the Green relations on
$S$ (see \cite[Section~2.1]{CP}). A semigroup $S$ is called \emph{simple} if $S$ does not contain any proper two-sided ideals and \emph{bisimple} if $S$ has only one
$\mathscr{D}$-class.

A {\it semitopological} (resp., \emph{topological}) {\it semigroup}
is a Hausdorff topological space together with a separately (resp.,
jointly) continuous semigroup operation \cite{CHK, Ruppert1984}. An
inverse topological semigroup with continuous inversion is
called a \emph{topological inverse semigroup}. A topology $\tau$ on
a (inverse) semigroup $S$ which turns $S$ into a topological
(inverse) semigroup is called a  \emph{semigroup} (\emph{inverse})
\emph{topology} on $S$. A {\it semitopological group} is a Hausdorff
topological space together with a separately continuous group
operation \cite{Ruppert1984} and a \emph{topological group} is a Hausdorff
topological space together with a jointly continuous group operation
and inversion \cite{CHK}.

The bicyclic semigroup ${\mathscr{C}}(p,q)$ is the semigroup with
the identity $1$ generated by elements $p$ and $q$ subjected only to
the condition $pq=1$. The bicyclic semigroup is bisimple and every
one of its congruences is either trivial or a group congruence.
Moreover, every non-annihilating homomorphism $h$ of the bicyclic
semigroup is either an isomorphism or the image of
${\mathscr{C}}(p,q)$ under $h$ is a cyclic group~(see
\cite[Corollary~1.32]{CP}). The bicyclic semigroup plays an
important role in algebraic theory of semigroups and in the theory
of topological semigroups. For example the well-known Andersen's
result~\cite{Andersen} states that a ($0$--)simple semigroup is
completely ($0$--)simple if and only if it does not contain the
bicyclic semigroup. The bicyclic semigroup admits only the discrete
semigroup topology and a topological semigroup $S$ can contain the
bicyclic semigroup ${\mathscr C}(p,q)$ as a dense subsemigroup only
as an open subset~\cite{EberhartSelden1969}. Also Bertman and West
in \cite{BertmanWest1976} proved that the bicyclic semigroup as a
Hausdorff semitopological semigroup admits only the discrete
topology. The problem of an embedding of the bicycle semigroup into
compact-like topological semigroups was solved in the papers \cite{AHK,
BanakhDimitrovaGutik2009, BanakhDimitrovaGutik2010, GutikRepovs2007,
HildebrantKoch1988} and the closure of the bicyclic semigroup in
topological semigroups studied in~\cite{EberhartSelden1969}.

The properties of the bicyclic semigroup were extended to the following two directions: bicyclic-like semigroups which are bisimple and bicyclic-like extensions of semigroups. In the first case such are inverse bisimple semigroups with well-ordered subset of idempotents: $\omega^n$-bisimple semigroups \cite{Hogan1972}, $\omega^\alpha$-bisimple semigroups \cite{Hogan1973} and an $\alpha$-bicyclic semigroup, and bisimple inverse semigroups with linearly ordered subsets of idempotents which are isomorphic to either $[0,\infty)$ or $(-\infty,\infty)$ as  subsets of the real line: $B_{[0,\infty)}^1$, $B_{[0,\infty)}^2$, $B_{(-\infty,\infty)}^1$ and $B_{(-\infty,\infty)}^2$. Ahre \cite{Ahre1981, Ahre1983, Ahre1984, Ahre1986, Ahre1989} and Korkmaz \cite{Korkmaz1997,Korkmaz2009} studied Hausdorff semigroup topologizations of the semigroups $B_{[0,\infty)}^1$, $B_{[0,\infty)}^2$, $B_{(-\infty,\infty)}^1$, and $B_{(-\infty,\infty)}^2$ and their closures in topological semigroups. Annie Selden \cite{Selden1985} and Hogan \cite{Hogan1984} proved that the only locally compact Hausdorff topology making an $\alpha$-bicyclic semigroup into a topological semigroup is the discrete topology. In \cite{Hogan1987} Hogan studied Hausdorff inverse semigroup topologies on an $\alpha$-bicyclic semigroup and there he constructed non-discrete Hausdorff inverse semigroup topology on an $\alpha$-bicyclic semigroup.

Let $\mathbb{Z}$ be the additive group of integers. On the Cartesian
product $\mathscr{C}_{\mathbb{Z}}=\mathbb{Z}\times\mathbb{Z}$ we
define the semigroup operation as follows:
\begin{equation}\label{f1}
    (a,b)\cdot(c,d)=
\left\{
  \begin{array}{ll}
    (a-b+c,d), & \hbox{if }~b<c; \\
    (a,d),     & \hbox{if }~b=c; \\
    (a,d-c+b), & \hbox{if }~b>c,\\
  \end{array}
\right.
\end{equation}
for $a,b,c,d\in\mathbb{Z}$. The set $\mathscr{C}_{\mathbb{Z}}$ with
such defined operation is called the \emph{extended bicyclic
semigroup} \cite{Warne1968}. It is obvious that the extended
bicyclic semigroup is an extension of the bicyclic semigroup. The
extended bicyclic semigroup admits only the discrete topology as a
semitopological semigroup \cite{FihelGutik2011}. Also, the problem
of a closure of $\mathscr{C}_{\mathbb{Z}}$ in topological semigroup was
studied in \cite{FihelGutik2011}.

The conception of Bruck-Reilly extensions started from the Bruck paper \cite{Bruck1958} where he proposed the construction of an embedding of semigroups into simple monoids. Reilly in \cite{Reilly1966} generalized the Bruck construction up to so called in our time Bruck-Reilly construction and using it described the structure of $\omega$-bisimple semigroups. Annie Selden in \cite{Selden1975, Selden1976, Selden1977} described the structure of locally compact topological inverse $\omega$-bisimple semigroups and their closure in topological semigroups.

The disquisition of topological Bruck-Reilly extensions of topological and semitopological semigroups was started in the papers \cite{Gutik1994, Gutik1997} and continued in \cite{Pavlyk2008, GutikPavlyk2009}. Using the ideas of the paper \cite{Gutik1994} Gutik in \cite{Gutik1996} proposed the construction of embedding of an arbitrary topological (inverse) semigroup into a simple path-connected  topological (inverse) monoid.

Let $G$ be a linearly ordered group and $S$ be any semigroup. Let
$\alpha\colon G^+\rightarrow \operatorname{End}(S^1)$ be a
homomorphism from the positive cone $G^+$ into the semigroup of all
endomorphisms of $S^1$. By $\mathscr{B}(S,G,\alpha)$ we denote the
set $G\times S^1\times G$ with the following binary operation
\begin{equation}\label{f2}
	\begin{array}{ll}
    (g_1,s_1,h_1)\cdot(g_2,s_2,h_2)=\\
    =(g_1(h_1{\wedge} g_2)^{-1}g_2,\,
    \alpha[e{\vee} h_1^{-1}g_2](s_1){\cdot}
    \alpha[e{\vee} g_2^{-1}h_1](s_2),\,
    h_2(h_1{\wedge} g_2)^{-1}h_1).
    \end{array}
\end{equation}
A binary operation so defined on the set $\mathscr{B}(S,G^+,\alpha)=G^+\times
S^1\times G^+$ with the semigroup operation induced from $\mathscr{B}(S,G,\alpha)$
 is a subsemigroup of $\mathscr{B}(S,G,\alpha)$ \cite{Fortunatov1977}.

Now we let $G=\mathbb{Z}$ be the additive group of integers with
usual order $\leq$ and $S$ be any semigroup. Let $\alpha\colon
\mathbb{Z}^+\rightarrow \operatorname{End}(S^1)$ be a homomorphisms
from the positive cone $\mathbb{Z}^+$ into the semigroup of all
endomorphisms of $S^1$. Then formula (\ref{f2}) determines the
following semigroup operation on $\mathscr{B}(S,\mathbb{Z},\alpha)$:
\begin{equation*}
   	\begin{array}{ll}
   (i,s,j)\cdot(m,t,n)=\\
   \!\!\!(i{+}m{-}{\min}\{j,m\},
   \alpha[m{-}\min\{j,m\}](s){\cdot}\alpha[j{-}{\min}\{j,m\}](t),
   j{+}n{-}{\min}\{j,m\}),
   	\end{array}
\end{equation*}
where $s,t\in S^1$ and $i,j,m,n\in\mathbb{Z}$.

Let $\theta\colon S^1\rightarrow H(1_S)$ be a homomorphism from the
monoid $S^1$ into the group of units $H(1_S)$ of $S^1$. Then we put
$\alpha[n](s)=\theta^n(s)$, for a positive integer $n$ and
$\theta^0\colon S^1\rightarrow S^1$ be an identity map of $S^1$.
Later the semigroup $\mathscr{B}(S,\mathbb{Z},\alpha)$ with such a
defined homomorphism $\alpha$ we shall denote by
$\mathscr{B}(S,\mathbb{Z},\theta)$, and in the case when the
homomorphism $\theta\colon S^1\rightarrow H(1_S)$ is defined by the
formula
\begin{equation*}
\theta^n(s)= \left\{
  \begin{array}{ll}
    1_S, & \hbox{if~} n>0;\\
    s, & \hbox{if~} n=0,
  \end{array}
\right.
\end{equation*}
we shall denote it by $\mathscr{B}(S,\mathbb{Z})$. We observe that
the semigroup operation on $\mathscr{B}(S,\mathbb{Z},\theta)$ is
defined by the formula
\begin{equation}\label{f3}
    (i,s,j)\cdot(m,t,n)=
\left\{
  \begin{array}{ll}
    (i-j+m,\theta^{m-j}(s)\cdot t,n), & \hbox{if }~j<m; \\
    (i,s\cdot t,n),     & \hbox{if }~j=m; \\
    (i,s\cdot\theta^{j-m}(t),n-m+j), & \hbox{if }~j>m,\\
  \end{array}
\right.
\end{equation}
for $i,j,m,n\in\mathbb{Z}$ and $s,t\in S^1$. Later we shall call the
semigroup $\mathscr{B}(S,\mathbb{Z},\theta)$ the
\emph{$\mathbb{Z}$-Bruck-Reilly extension of the semigroup} $S$ and
$\mathscr{B}(S,\mathbb{Z})$ the \emph{$\mathbb{Z}$-Bruck
extension of the semigroup} $S$, respectively. Also we observe that
if $S$ is a trivial semigroup then the semigroups
$\mathscr{B}(S,\mathbb{Z},\theta)$ and $\mathscr{B}(S,\mathbb{Z})$
are isomorphic to the extended bicyclic semigroup (see
\cite{Warne1968}).

\begin{proposition}\label{proposition-1.1}
Let $S^1$ be a monoid and $\theta\colon S^1\rightarrow H(1_S)$ be a
homomorphism from $S^1$ into the group of units $H(1_S)$ of $S^1$.
The the following statements holds:
\begin{itemize}
  \item[$(i)$] $\mathscr{B}(S,\mathbb{Z},\theta)$ and
   $\mathscr{B}(S,\mathbb{Z})$ are simple semigroups;

  \item[$(ii)$] $\mathscr{B}(S,\mathbb{Z},\theta)$
  ($\mathscr{B}(S,\mathbb{Z})$) is an inverse semigroup if and
   only if $S^1$ is an inverse semigroup;

  \item[$(iii)$] $\mathscr{B}(S,\mathbb{Z},\theta)$
  ($\mathscr{B}(S,\mathbb{Z})$) is a regular semigroup if and
   only if $S^1$ is a regular semigroup.
\end{itemize}
\end{proposition}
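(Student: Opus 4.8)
The plan is to prove the three statements by direct computation with the explicit multiplication formula \eqref{f3}, treating $\mathscr{B}(S,\mathbb{Z})$ as the special case of $\mathscr{B}(S,\mathbb{Z},\theta)$ in which $\theta$ sends everything of positive exponent to $1_S$, so I only need to argue for $\mathscr{B}(S,\mathbb{Z},\theta)$.

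\emph{Statement $(i)$.} First I would show that $\mathscr{B}(S,\mathbb{Z},\theta)$ is simple by verifying that for any two elements $(i,s,j)$ and $(m,t,n)$ there exist $(a,u,b)$ and $(c,v,d)$ with $(a,u,b)\cdot(i,s,j)\cdot(c,v,d)=(m,t,n)$. Here the idea is the same as for the extended bicyclic semigroup: one can shift the first and third coordinates freely using factors of the form $(k,1_S,l)$, and then adjust the middle coordinate using a factor with nontrivial $S^1$-entry. Concretely, left-multiplying by $(m, 1_S, i)$ moves the first coordinate to $m$ while the relevant $\theta$-powers applied to $1_S$ stay $1_S$; right-multiplying by $(j, 1_S, n)$ moves the third coordinate to $n$; and a suitably placed factor $(p, t, p)$ (an idempotent-like element) overwrites the middle coordinate with $t$ via the $j=m$ clause of \eqref{f3}, using $s\cdot 1_S\cdot t$ collapsing appropriately. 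I would write out one such explicit product and check it lands on $(m,t,n)$; this shows the only ideal is the whole semigroup.

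\emph{Statements $(ii)$ and $(iii)$.} For these I would use the standard fact (see \cite[Section~2.1]{CP}) that a semigroup is regular, respectively inverse, iff each element has at least one, respectively exactly one, inverse. For the ``if'' direction: assuming $S^1$ is regular, given $(i,s,j)$ pick $s'\in S^1$ with $ss's=s$ and $s'ss'=s'$, and verify that $(j, \theta^?(s'), i)$ — with the $\theta$-exponents chosen so that everything matches — is an inverse of $(i,s,j)$; when $S^1$ is inverse one takes $s'=s^{-1}$ and additionally checks uniqueness by showing any inverse of $(i,s,j)$ must have first coordinate $j$, third coordinate $i$, and middle coordinate forced to be $s^{-1}$. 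For the ``only if'' direction, I would embed $S^1$ into $\mathscr{B}(S,\mathbb{Z},\theta)$ as the subset $\{(0,s,0):s\in S^1\}$ (observe from \eqref{f3} that $(0,s,0)\cdot(0,t,0)=(0,st,0)$, so this is a subsemigroup isomorphic to $S^1$), note it is a regular (inverse) subsemigroup when the ambient semigroup is, and conclude that $S^1$ is regular (inverse). For the inverse case one must additionally be careful that this subsemigroup is closed under taking inverses in the ambient semigroup — which follows because the inverse of $(0,s,0)$ lies in $\{0\}\times S^1\times\{0\}$ by the coordinate analysis above.

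The main obstacle I anticipate is bookkeeping the $\theta$-powers correctly in statements $(ii)$ and $(iii)$: one has to track which branch of \eqref{f3} applies at each multiplication in the triple products defining inverses, and verify that the $\theta^k$ that appear cancel or simplify as needed (using that $\theta$ is a homomorphism into the group of units, so $\theta^k$ restricted to the group of units is invertible, and $\theta$ fixes idempotents appropriately). The uniqueness half of $(ii)$ is the most delicate point: I would argue that if $(m,u,n)$ is an inverse of $(i,s,j)$, then comparing first/third coordinates in the two defining equations $(i,s,j)(m,u,n)(i,s,j)=(i,s,j)$ and $(m,u,n)(i,s,j)(m,u,n)=(m,u,n)$ forces $m=j$ and $n=i$, after which the middle-coordinate equations reduce to the defining equations for an inverse of $s$ in $S^1$, whence $u=s^{-1}$ by uniqueness in $S^1$.
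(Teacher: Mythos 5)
Your argument for part $(i)$ breaks down at the key step. After the two multiplications by $(m,1_S,i)$ and $(j,1_S,n)$ you are left with $(m,s,n)$ and propose to ``overwrite'' the middle coordinate by a factor $(p,t,p)$ through the $j=m$ clause of (\ref{f3}); but that clause produces the product $s\cdot t$ (or $t\cdot s$), not $t$, and $s\cdot 1_S\cdot t$ does not ``collapse'' to $t$ in an arbitrary monoid. The middle entry $s$ cannot be erased at the same level: the whole point of the hypothesis that $\theta$ maps $S^1$ into the group of units $H(1_S)$ is that after shifting up one level the middle entry becomes the \emph{unit} $\theta(s)$, which can then be cancelled by its group inverse. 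For instance,
\begin{equation*}
\bigl(m,\,t\cdot(\theta(s))^{-1},\,i+1\bigr)\cdot(i,s,j)\cdot(j+1,1_S,n)=(m,t,n),
\end{equation*}
since the first multiplication falls under the third clause of (\ref{f3}) and gives $\bigl(m,\,t\cdot(\theta(s))^{-1}\cdot\theta(s),\,j+1\bigr)=(m,t,j+1)$. Without some such use of invertibility of $\theta(s)$ you have not shown that the ideal generated by $(i,s,j)$ contains elements with arbitrary middle coordinate, so simplicity is not established.

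Parts $(ii)$ and $(iii)$ are essentially right and follow the standard Bruck--Reilly pattern that the paper itself merely cites (Section~8.5 of \cite{CP}, Theorem~5.6.6 of \cite{Howie1995}): $(j,s',i)$ is an inverse of $(i,s,j)$ whenever $s'$ is an inverse of $s$ (no $\theta$-powers arise, as all three inner products fall under the $j=m$ clause), and the converse goes through the subsemigroup $\{0\}\times S^1\times\{0\}\cong S^1$. Two points need tightening. First, a subsemigroup of a regular semigroup need not be regular, so in $(iii)$ you must run the same coordinate analysis as in $(ii)$ to see that some inverse of $(0,s,0)$ lies in $\{0\}\times S^1\times\{0\}$; you say this for the inverse case but it is equally needed for the regular case. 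Second, Remark~\ref{remark-1.2} only yields $m-n=j-i$ for an inverse $(m,u,n)$ of $(i,s,j)$; to get $m=j$ and $n=i$ you also need the observation that the first coordinate of a product is at least the first coordinate of the left factor and the third coordinate is at least the third coordinate of the right factor, so that the equation $(i,s,j)(m,u,n)(i,s,j)=(i,s,j)$ forces $m\leqslant j$ while $(m,u,n)(i,s,j)(m,u,n)=(m,u,n)$ forces $m\geqslant j$. With these repairs the uniqueness argument, and hence $(ii)$ and $(iii)$, go through.
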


The proofs of the statements of Proposition~\ref{proposition-1.1}
are similar to corresponding theorems of Section 8.5 of \cite{CP}
and Theorem~5.6.6 of \cite{Howie1995}.

Also, we remark that the semigroups
$\mathscr{B}(S,\mathbb{Z},\theta)$ and $\mathscr{B}(S,\mathbb{Z})$
have similar descriptions of Green's relations to Bruck-Reilly and
Bruck extensions of $S^1$ (see Lemma~8.46 of \cite{CP} and
Theorem~5.6.6$(2)$ of \cite{Howie1995}), and hence the semigroup
$\mathscr{B}(S,\mathbb{Z},\theta)$ (resp., $\mathscr{B}(S,\mathbb{Z})$) is
bisimple if and only if $S^1$ is bisimple.

\begin{remark}\label{remark-1.2}
Formula (\ref{f3}) implies that if $(i,s,j)\cdot(m,t,n)=(k,d,l)$ in
the semigroup  $\mathscr{B}(S,\mathbb{Z},\theta)$ then
$k-l=i-j+m-n$.
\end{remark}

For every $m ,n\in\mathbb{Z}$ and $A\subseteq S$ we define
$S_{m,n}=\{(m,s,n)\colon s\in S\}$ and $A_{m,n}=\{(m,s,n)\colon s\in
A\}$.

In this paper we introduce topological the $\mathbb{Z}$-Bruck-Reilly and
topological $\mathbb{Z}$-Bruck extensions of (semi)to\-po\-lo\-gi\-cal
monoids which are generalizations of topological Bruck-Reilly and
topological Bruck extensions of (semi)to\-po\-lo\-gi\-cal monoids and study
their topologizations. The sufficient conditions under which the
topological $\mathbb{Z}$-Bruck-Reilly ($\mathbb{Z}$-Bruck) extension
admits only the direct sum topology and conditions under which the
direct sum topology can be coarsened are given. Also, topological
characterizations of some classes of $I$-bisimple (semi)topological
semigroups are given.

\section{On topological $\mathbb{Z}$-Bruck-Reilly extensions}

Let $S$ be a monoid with a group of units $H(1_S)$. Obviously if one of the following conditions holds:
\begin{itemize}
  \item[$1)$] $H(1_S)$ is a trivial group;
  \item[$2)$] $S$ is congruence-free and $S$ is not a group;
  \item[$3)$] $S$ has zero,
\end{itemize}
then every homomorphism $\theta\colon S^1\rightarrow H(1_S)$ is annihilating. Also, many topological properties of a (semi)topological semigroup $S$ guarantee the triviality of $\theta$. For example, such is the following: $H(1_S)$ is a discrete subgroup of $S$ and $S$ has a minimal ideal $K(S)$ which is a connected subgroup of $S$.

On the other side there exist many conditions on a (semitopological, topological) semigroup $S$ which ensure the existence of a non-annihilating (continuous) homomorphism  $\theta\colon S^1\rightarrow H(1_S)$ from $S$ into non-trivial group of units $H(1_S)$. For example, such conditions are the following:
\begin{itemize}
  \item[$1)$] the (semitopological, topological) semigroup $S$ has a minimal ideal $K(S)$ which is a non-trivial group and there exists a non-annihilating (continuous) homomorphism $h\colon K(S)\rightarrow H(1_S)$;
  \item[$2)$] $S$ is an inverse semigroup and there exists a non-annihilating homomorphism $h\colon S/\sigma\rightarrow H(1_S)$, where $\sigma$ is the least group congruence on $S$ (see \cite[Section~III.5]{Petrich1984}).
\end{itemize}

Let $(S,\tau)$ be a semitopological monoid and $1_S$ be a unit of
$S$. If $S$ does not contain a unit then without loss of generality
we can assume that $S$ is a semigroup with an isolated adjoined
unit. Also we shall assume that the homomorphism $\theta\colon
S^1\rightarrow H(1_S)$ is continuous.

Let $\mathcal{B}$ be a base of the topology $\tau$ on $S$. According
to \cite{Gutik1994} the topology $\tau_\textbf{BR}$ on
$\mathscr{B}(S,\mathbb{Z},\theta)$ generated by the base
\begin{equation*}
    \mathcal{B}_{BR}=\left\{(i,U,j)\colon
    U\in\mathcal{B},i,j\in\mathbb{Z}\right\}
\end{equation*}
is called a \emph{direct sum topology} on
$\mathscr{B}(S,\mathbb{Z},\theta)$ and we shall denote it by
$\tau_{\textbf{BR}}^\textbf{ds}$. We observe that the topology
$\tau_{\textbf{BR}}^\textbf{ds}$ is the product topology on
$\mathscr{B}(S,\mathbb{Z},\theta)= \mathbb{Z}\times
S\times\mathbb{Z}$.

\begin{proposition}\label{proposition-2.1}
Let $(S,\tau)$ be a semitopological (resp., topological, topological
inverse) semigroup and $\theta\colon S^1\rightarrow H(1_S)$ be a
continuous homomorphism from $S$ into the group of units $H(1_S)$
of $S$. Then $\left(\mathscr{B}(S,\mathbb{Z},\theta),
\tau_{\textbf{BR}}^\textbf{ds}\right)$ is a semitopological (resp.,
topological, topological inverse) semigroup.
\end{proposition}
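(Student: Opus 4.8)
The plan is to verify directly that the semigroup operation on $\mathscr{B}(S,\mathbb{Z},\theta)$ given by formula~(\ref{f3}) is separately (resp.\ jointly) continuous with respect to $\tau_{\textbf{BR}}^\textbf{ds}$, and, in the inverse case, that the inversion is continuous as well. Since $\tau_{\textbf{BR}}^\textbf{ds}$ is the product topology on $\mathbb{Z}\times S\times\mathbb{Z}$ with $\mathbb{Z}$ discrete, a basic open neighbourhood of a point $(k,d,l)$ has the form $(k,U,l)$ with $U$ an open neighbourhood of $d$ in $S$. The key structural observation, already recorded in Remark~\ref{remark-1.2}, is that the first and third coordinates of a product $(i,s,j)\cdot(m,t,n)$ depend \emph{only} on $i,j,m,n$ and not on $s,t$; hence on each ``slab'' $S_{i,j}\times S_{m,n}$ the product lands in a single slab $S_{k,l}$, and on that slab the operation reduces to one of the three maps $(s,t)\mapsto \theta^{m-j}(s)\cdot t$, $(s,t)\mapsto s\cdot t$, or $(s,t)\mapsto s\cdot\theta^{j-m}(t)$ on $S^1$, according to whether $j<m$, $j=m$, or $j>m$.

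First I would fix $(i,s_0,j)$ and $(m,t_0,n)$ in $\mathscr{B}(S,\mathbb{Z},\theta)$, compute the product $(k,d_0,l)$ via~(\ref{f3}), and take a basic neighbourhood $(k,W,l)$ of it. Because the integer coordinates of the product are locally constant, it suffices to find neighbourhoods $(i,U,j)$ and $(m,V,n)$ of the two factors such that the appropriate $S^1$-valued map sends $U\times V$ (resp.\ $\{s_0\}\times V$ and $U\times\{t_0\}$ in the separately continuous case) into $W$. In the case $j=m$ this is immediate from (separate/joint) continuity of multiplication in $S^1$, noting that multiplication on $S^1 = S\sqcup\{1_S\}$ with $1_S$ isolated is (separately/jointly) continuous whenever it is so on $S$. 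In the cases $j\ne m$ one first applies continuity of $\theta$ (hence of each iterate $\theta^{m-j}$ or $\theta^{j-m}$, and of $\theta^0=\mathrm{id}$) to pull $W$ back through the endomorphism applied to the relevant factor, and then applies continuity of multiplication in $S^1$; composing two continuous maps, or a continuous map in one variable with a fixed value in the other, preserves the required continuity.

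For the inverse case, by Proposition~\ref{proposition-1.1}$(ii)$ the semigroup $\mathscr{B}(S,\mathbb{Z},\theta)$ is inverse exactly when $S^1$ is, and one checks (as in the classical Bruck--Reilly setting) that $(i,s,j)^{-1}=(j,s^{-1},i)$, where $s^{-1}$ denotes the inverse of $s$ in $S^1$ — using that $\theta$ maps into the group of units, so $\theta(s^{-1})=(\theta(s))^{-1}$. Continuity of inversion on $\mathscr{B}(S,\mathbb{Z},\theta)$ then reduces to continuity of inversion on $S^1$; and inversion on $S^1$ is continuous whenever it is on $S$, because the adjoined unit $1_S$ is isolated and is its own inverse. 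Putting these together yields that $\left(\mathscr{B}(S,\mathbb{Z},\theta),\tau_{\textbf{BR}}^\textbf{ds}\right)$ is a topological inverse semigroup when $(S,\tau)$ is.

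The only mildly delicate points are bookkeeping ones rather than genuine obstacles: one must handle the artificially adjoined isolated identity $1_S$ uniformly in the three branches of~(\ref{f3}) (so that ``continuity on $S$'' really does give ``continuity on $S^1$''), and one must remember that $\theta^0$ is the identity map so the ``$n=0$'' cases of the endomorphisms are covered. I expect the case split in~(\ref{f3}) to be the main bookkeeping burden, but since Remark~\ref{remark-1.2} guarantees the integer coordinates are locally constant, no single case presents a real difficulty.
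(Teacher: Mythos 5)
Your proposal is correct and is essentially the argument the paper has in mind: the paper omits the proof, stating only that it is similar to Theorem~1 of \cite{Gutik1994}, and that argument is exactly the direct verification you give — since $\tau_{\textbf{BR}}^\textbf{ds}$ is the product topology with $\mathbb{Z}$ discrete, the integer coordinates of a product are locally constant, so continuity of the operation (and of inversion via $(i,s,j)^{-1}=(j,s^{-1},i)$) reduces case-by-case in formula~(\ref{f3}) to continuity of multiplication, inversion and $\theta$ on $S^1$. No gaps.
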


The proof of Proposition~\ref{proposition-2.1} is similar to
Theorem~1 from \cite{Gutik1994}.

\begin{definition}\label{definition-2.2}
Let $\mathfrak{S}$ be some class of semitopological semigroups and
$(S,\tau)\in\mathfrak{S}$. If $\tau_\textbf{BR}$ is a topology on
$\mathscr{B}(S,\mathbb{Z},\theta)$ such that the homomorphism
$\theta\colon S^1\rightarrow H(1_S)$ is a continuous map,
$\left(\mathscr{B}(S,\mathbb{Z},\theta),\tau_\textbf{BR}\right)
\in\mathfrak{S}$ and $\tau_\textbf{BR}|_{S_{m,m}}=\tau$ for some
$m\in\mathbb{Z}$, then the semigroup
$\left(\mathscr{B}(S,\mathbb{Z},\theta),\tau_\textbf{BR}\right)$ is
called a \emph{topological $\mathbb{Z}$-Bruck-Reilly extension} of
the semitopological semigroup $(S,\tau)$ in the class
$\mathfrak{S}$. In the case when $\theta(s)=1_S$ for all $s\in S^1$,
the semigroup $\left(\mathscr{B}(S,\mathbb{Z}),
\tau_\textbf{BR}\right)$ is called a \emph{topological
$\mathbb{Z}$-Bruck extension} of the semitopological semigroup
$(S,\tau)$ in the class $\mathfrak{S}$.
\end{definition}

Proposition~\ref{proposition-2.1} implies that for every
semitopological (resp., topological, topological inverse) semigroup
$(S,\tau)$ there exists a \emph{topological
$\mathbb{Z}$-Bruck-Reilly extension}
$\left(\mathscr{B}(S,\mathbb{Z},\theta),
\tau_{\textbf{BR}}^\textbf{ds}\right)$ of the semitopological
(resp., topological, topological inverse) semigroup $(S,\tau)$ in
the class of semitopological (resp., topological, topological
inverse) semigroups. It is natural to ask: \emph{when is
$\left(\mathscr{B}(S,\mathbb{Z},\theta),
\tau_{\textbf{BR}}^\textbf{ds}\right)$ unique for the semigroup
$(S,\tau)$?}

\begin{proposition}\label{proposition-2.3}
Let $\left(\mathscr{B}(S,\mathbb{Z},\theta),
\tau_{\textbf{BR}}\right)$ be a semitopological semigroup. Then the
following conditions hold:
\begin{itemize}
 \item[$(i)$] for every $i,j,k,l\in\mathbb{Z}$ the topological
  subspaces $S_{i,j}$ and $S_{k,l}$ are homeomorphic and moreover
  $S_{i,i}$ and $S_{k,k}$ are topologically isomorphic
  subsemigroups in $\left(\mathscr{B}(S,\mathbb{Z},\theta),
  \tau_{\textbf{BR}}\right)$;

 \item[$(ii)$] for every $(i,s,j) \in
  \mathscr{B}(S,\mathbb{Z},\theta)$ there exists an open neighbourhood
  $U_{(i,s,j)}$ of the point $(i,s,j)$ in
  $\left(\mathscr{B}(S,\mathbb{Z},\theta),
  \tau_{\textbf{BR}}\right)$ such that
  \begin{equation*} U_{(i,s,j)}\subseteq\bigcup\left\{S_{i-k,j-k}\colon
  k=0,1,2,3,\ldots\right\}. \end{equation*}
\end{itemize}
\end{proposition}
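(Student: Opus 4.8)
The plan is to exploit the algebra of $\mathscr{B}(S,\mathbb{Z},\theta)$ together with separate continuity of the multiplication. For part $(i)$, the key observation is that left and right translations by the distinguished idempotents supply the homeomorphisms. For fixed $k,l$ write $e=(k,1_S,k)$ and $f=(l,1_S,l)$; these are idempotents of $\mathscr{B}(S,\mathbb{Z},\theta)$ by formula~(\ref{f3}). First I would check by a direct computation with (\ref{f3}) that for any $i,j$ the maps $\lambda\colon S_{i,j}\to S_{k,j}$, $(i,s,j)\mapsto (k,1_S,i)\cdot(i,s,j)=(k,s,j)$ and $\rho\colon S_{k,j}\to S_{k,l}$, $(k,s,j)\mapsto (k,s,j)\cdot(j,1_S,l)=(k,s,l)$ are well-defined bijections; their inverses are translations by $(i,1_S,k)$ and $(l,1_S,j)$ respectively, as one sees again from (\ref{f3}). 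Since $\left(\mathscr{B}(S,\mathbb{Z},\theta),\tau_{\textbf{BR}}\right)$ is semitopological, all four translations are continuous, so $\rho\circ\lambda\colon S_{i,j}\to S_{k,l}$ is a homeomorphism. In the diagonal case $j=i$, $l=k$, the same map $(i,s,i)\mapsto(k,s,k)$ is moreover a semigroup isomorphism: using (\ref{f3}) with the middle (equal-index) branch, $(i,s,i)\cdot(i,t,i)=(i,st,i)$ and $(k,s,k)\cdot(k,t,k)=(k,st,k)$, so it is a topological isomorphism of subsemigroups.

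For part $(ii)$ I would use separate continuity at a product that ``lands on the diagonal''. Fix $(i,s,j)$ and consider the idempotent $\mathbf{e}=(j,1_S,j)$. Then, by the $j=m$ branch of (\ref{f3}), $(i,s,j)\cdot\mathbf{e}=(i,s,j)$, i.e.\ right translation $\rho_{\mathbf{e}}$ fixes $(i,s,j)$. Separate continuity gives an open neighbourhood $U_{(i,s,j)}$ of $(i,s,j)$ with $U_{(i,s,j)}\cdot\mathbf{e}=\rho_{\mathbf{e}}(U_{(i,s,j)})\subseteq \rho_{\mathbf{e}}\big(\mathscr{B}(S,\mathbb{Z},\theta)\big)$, but more usefully I would instead shrink $U_{(i,s,j)}$ so that $\rho_{\mathbf{e}}(U_{(i,s,j)})$ lies inside a basic neighbourhood of $(i,s,j)$; the real content is the computation of $\big(\mathscr{B}(S,\mathbb{Z},\theta)\big)\cdot\mathbf{e}$. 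From (\ref{f3}), multiplying an arbitrary $(p,u,q)$ on the right by $(j,1_S,j)$ gives: if $q<j$, the triple $(p,\,\theta^{j-q}(u),\,j)$; if $q=j$, $(p,u,j)$; if $q>j$, $(p-q+j,\,\dots,\,j)$. In every case the resulting first and second coordinates differ by exactly the same amount: writing the image as $(k,\cdot,l)$ we get $l=j$ and $k-l=k-j$, and by Remark~\ref{remark-1.2} (applied to $(p,u,q)\cdot\mathbf{e}$) we have $k-l=p-q+j-j=p-q$. Hence the image of $\rho_{\mathbf{e}}$ is exactly $\bigcup\{S_{r+j,\,j}\colon r\in\mathbb{Z}\}=\bigcup_{k\in\mathbb{Z}}S_{i-k,\,j-k}$ after reindexing with $r=k-j$ through the point $(i,s,j)$'s fibre — more precisely, since $\rho_{\mathbf{e}}$ fixes $(i,s,j)$ and is the identity on the $l=j$ slices appropriately, one gets $U_{(i,s,j)}\subseteq \rho_{\mathbf{e}}^{-1}\big(\mathrm{Im}\,\rho_{\mathbf{e}}\big)$; I would then combine this with the symmetric argument using the \emph{left} idempotent $\mathbf{f}=(i,1_S,i)$, for which $\mathbf{f}\cdot(i,s,j)=(i,s,j)$, and intersect the two neighbourhoods. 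The condition $j\le i$ versus $j>i$ does not matter here because the claimed union $\bigcup\{S_{i-k,j-k}\colon k=0,1,2,\dots\}$ only goes downward; to cut off the upward part I would use that $\mathbf{f}\cdot(p,u,q)$ has first coordinate $\ge i$ exactly when $p\ge i$ fails in a controlled way — actually the cleaner route is: $\mathbf{f}\cdot(p,u,q)\cdot\mathbf{e}$ equals $(i,s,j)$ only for $(p,u,q)$ in the ``band'' $\bigcup_{k\ge 0}S_{i-k,j-k}$, and separate continuity applied twice produces $U_{(i,s,j)}$ inside the full preimage of $\{(i,s,j)\}$ under $(p,u,q)\mapsto \mathbf{f}\cdot(p,u,q)\cdot\mathbf{e}$, which is contained in that band.

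The main obstacle I anticipate is getting the index bookkeeping in $(ii)$ exactly right — in particular, verifying that two-sided multiplication by $\mathbf{f}$ and $\mathbf{e}$ pins the neighbourhood into the \emph{downward} cone $\bigcup_{k\ge 0}S_{i-k,j-k}$ rather than some two-sided band, and doing so uniformly over all elements of a neighbourhood rather than pointwise. The algebraic identities are routine applications of (\ref{f3}) and Remark~\ref{remark-1.2}; the topological input is only the separate continuity of the two one-sided translations $\rho_{\mathbf{e}}$ and $\lambda_{\mathbf{f}}$ and the fact that both fix $(i,s,j)$. Once the set-theoretic computation $\{\,x\in\mathscr{B}(S,\mathbb{Z},\theta)\colon \mathbf{f}\cdot x\cdot\mathbf{e}=(i,s,j)\,\}\subseteq\bigcup_{k\ge 0}S_{i-k,j-k}$ is established, part $(ii)$ follows by taking $U_{(i,s,j)}$ to be the full preimage of a suitably small diagonal neighbourhood of $(i,s,j)$ and intersecting with the band; no shrinking beyond basic open sets is needed.
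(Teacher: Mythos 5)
Part $(i)$ of your proposal is correct and is essentially the paper's argument: conjugation by $(k,1_S,i)$ and $(j,1_S,l)$ gives mutually inverse continuous bijections $S_{i,j}\leftrightarrow S_{k,l}$, hence homeomorphisms, and in the diagonal case a topological isomorphism.

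Part $(ii)$ has a genuine gap in the topological step. Your set-theoretic computation is fine: with $\mathbf{f}=(i,1_S,i)$ and $\mathbf{e}=(j,1_S,j)$ one indeed has $\{x\colon \mathbf{f}\cdot x\cdot\mathbf{e}=(i,s,j)\}\subseteq\bigcup_{k\geqslant 0}S_{i-k,j-k}$. But continuity of $\psi\colon x\mapsto\mathbf{f}\cdot x\cdot\mathbf{e}$ only makes preimages of \emph{open} sets open; the preimage of the singleton $\{(i,s,j)\}$ is closed, not open, and your claim that ``separate continuity applied twice produces $U_{(i,s,j)}$ inside the full preimage of $\{(i,s,j)\}$'' is false unless $(i,s,j)$ happens to be isolated in the image of $\psi$ --- which is not known (and is essentially what one is trying to prove). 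Likewise, $\psi^{-1}(V)$ for a small open $V\ni(i,s,j)$ contains every $x$ with $\psi(x)\in V\setminus\{(i,s,j)\}$, and these need not lie in the band; intersecting with the band then destroys openness. The missing ingredient, which the paper supplies, is the \emph{closedness} of the one-sided ideals generated by idempotents: since $x\mapsto(i+1,1_S,i+1)\cdot x$ and $x\mapsto x\cdot(j+1,1_S,j+1)$ are continuous retractions of a Hausdorff space, their images $(i+1,1_S,i+1)\mathscr{B}(S,\mathbb{Z},\theta)=\{(m,t,n)\colon m\geqslant i+1\}$ and $\mathscr{B}(S,\mathbb{Z},\theta)(j+1,1_S,j+1)=\{(m,t,n)\colon n\geqslant j+1\}$ are closed, so their complement is an open neighbourhood $W_{(i,s,j)}$ contained in $\{(m,t,n)\colon m\leqslant i,\ n\leqslant j\}$. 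One then uses separate continuity to shrink $W_{(i,s,j)}$ to $U_{(i,s,j)}$ with $(i,1_S,i)\cdot U_{(i,s,j)}\subseteq W_{(i,s,j)}$ and $U_{(i,s,j)}\cdot(j,1_S,j)\subseteq W_{(i,s,j)}$, and checks that any $(m,a,n)\in U_{(i,s,j)}$ with $m-n\neq i-j$ is pushed out of $W_{(i,s,j)}$ by one of these two translations. Your argument cannot be repaired without some such closedness statement; as written, no open neighbourhood is ever produced inside the band.
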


\begin{proof}
$(i)$ For every $i,j,k,l\in\mathbb{Z}$ the map
$\phi^{k,l}_{i,j}\colon \mathscr{B}(S,\mathbb{Z},\theta)\rightarrow
\mathscr{B}(S,\mathbb{Z},\theta)$ defined by the formula
$\phi^{k,l}_{i,j}(x)=(k,1_S,i)\cdot x\cdot(j,1_S,l)$ is continuous
as a composition of left and right translations in the
semitopological semigroup $\left(\mathscr{B}(S,\mathbb{Z},\theta),
\tau_{\textbf{BR}}\right)$. Since
$\phi^{i,j}_{k,l}(\phi^{k,l}_{i,j}(s))=s$ and
$\phi^{k,l}_{i,j}(\phi^{i,j}_{k,l}(t))=t$ for all $s\in S_{i,j}$ and
$t\in S_{k,l}$ we conclude that the restriction
$\phi^{k,l}_{i,j}|_{S_{i,j}}$ is the inverse map of the restriction
$\phi^{i,j}_{k,l}|_{S_{k,l}}$. Then the continuity of the map
$\phi^{k,l}_{i,j}$ implies that the restriction
$\phi^{k,l}_{i,j}|_{S_{i,j}}$ is a homeomorphism which maps elements
of the subspace $S_{i,j}$ onto elements of the subspace $S_{k,l}$ in
$\mathscr{B}(S,\mathbb{Z},\theta)$. Now the definition of the map
$\phi^{k,l}_{i,j}$ implies that the restriction
$\phi^{k,k}_{i,i}|_{S_{i,i}}\colon S_{i,i}\rightarrow S_{k,k}$ is a
topological isomorphism of semitopological subsemigroups $S_{i,i}$
and $S_{k,k}$.

$(ii)$ Since the left and right translations in a semitopological
semigroup are continuous maps and left and right translations by an
idempotent are retractions, Exercise~1.5.C from \cite{Engelking1989}
implies that $(i+1,1_S,i+1)\mathscr{B}(S,\mathbb{Z},\theta)$ and
$\mathscr{B}(S,\mathbb{Z},\theta)(j+1,1_S,j+1)$ are closed subsets
in $\left(\mathscr{B}(S,\mathbb{Z},\theta),
\tau_{\textbf{BR}}\right)$. Hence there exists an open neighbourhood
$W_{(i,s,j)}$ of the point $(i,s,j)$ in
$\left(\mathscr{B}(S,\mathbb{Z},\theta), \tau_{\textbf{BR}}\right)$
such that
\begin{equation*}
W_{(i,s,j)}{\subseteq}\mathscr{B}(S,\mathbb{Z},\theta)\setminus
\left((i{+}1,1_S,i{+}1)\mathscr{B}(S,\mathbb{Z},\theta)\cup
\mathscr{B}(S,\mathbb{Z},\theta)(j{+}1,1_S,j{+}1)\right).
\end{equation*}
Since the semigroup operation in
$\left(\mathscr{B}(S,\mathbb{Z},\theta), \tau_{\textbf{BR}}\right)$
is separately continuous we conclude that there exists an open
neighbourhood $U_{(i,s,j)}$ of the point $(i,s,j)$ in
$\left(\mathscr{B}(S,\mathbb{Z},\theta), \tau_{\textbf{BR}}\right)$
such that
\begin{equation*}
    U_{(i,s,j)}\subseteq W_{(i,s,j)}, \;
    (i,1_S,i)\cdot U_{(i,s,j)}\subseteq W_{(i,s,j)} \;
    \hbox{and} \;
    U_{(i,s,j)}\cdot(j,1_S,j)\subseteq W_{(i,s,j)}.
\end{equation*}
Next we shall show that $U_{(i,s,j)}\subseteq
\bigcup\left\{S_{i-k,j-k}\colon k=0,1,2,3,\ldots\right\}$. Suppose
the contrary: there exists $(m,a,n)\in U_{(i,s,j)}$ such that
$(m,a,n)\notin\bigcup\left\{S_{i-k,j-k}\colon
k=0,1,2,3,\ldots\right\}$. Then we have that $m\leqslant i$,
$n\leqslant j$ and $m-n\neq i-j$. If $m-n>i-j$ then we have that
\begin{equation*}
(m{,}a{,}n){\cdot} (j,1_S,j){=}
(m{-}n{+}j,\theta^{j{-}n}(a),j)
{\notin}\mathscr{B}(S,\mathbb{Z},\theta){\setminus}
(i{+}1,1_S,i{+}1)\mathscr{B}(S,\mathbb{Z},\theta),
\end{equation*}
because $m-n+j>i-j+j=i$, and hence $(m,a,n)\cdot (j,1_S,j)\notin
W_{(i,s,j)}$. Similarly if $m-n<i-j$ then we have that
\begin{equation*}
(i{,}{1_S}{,}i){\cdot}(m,a,n){=}
(i,\theta^{i{-}m}(a),n{-}m{+}i)
{\notin}\mathscr{B}(S,\mathbb{Z},\theta)\setminus
\mathscr{B}(S,\mathbb{Z},\theta)(j{+}1,{1_S},j{+}1),
\end{equation*}
because $n-m+i>j-i+i=j$, and hence $(i,1_S,i)\cdot(m,a,n)\notin
W_{(i,s,j)}$. This completes the proof of our statement.
\end{proof}

\begin{theorem}\label{theorem-2.4}
Let $\left(\mathscr{B}(S,\mathbb{Z},\theta),
\tau_{\textbf{BR}}\right)$ be a topological
$\mathbb{Z}$-Bruck-Reilly extension of the semitopological semigroup
$(S,\tau)$. If $S$ contains a left (right or two-sided) compact
ideal, then $\tau_{\textbf{BR}}$ is the direct sum topology on
$\mathscr{B}(S,\mathbb{Z},\theta)$.
\end{theorem}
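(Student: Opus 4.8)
The plan is to show that every basic open set of $\tau_{\textbf{BR}}$ is contained in a single slice $S_{m,n}$, which together with Proposition~\ref{proposition-2.1} (guaranteeing $\tau_{\textbf{BR}}^{\textbf{ds}}$ is a semigroup topology) and the requirement $\tau_{\textbf{BR}}|_{S_{m,m}}=\tau$ forces $\tau_{\textbf{BR}}=\tau_{\textbf{BR}}^{\textbf{ds}}$. By Proposition~\ref{proposition-2.3}$(i)$ all slices $S_{i,j}$ are homeomorphic, and the translation maps $\phi^{k,l}_{i,j}$ used there move any slice onto any other, so it suffices to prove the following single localization fact: for every point $(i,s,j)$ there is a $\tau_{\textbf{BR}}$-neighbourhood contained in $S_{i,j}$ itself (not merely in the union $\bigcup_{k\geq 0} S_{i-k,j-k}$ furnished by Proposition~\ref{proposition-2.3}$(ii)$). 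Once that is established, translating by $\phi^{k,l}_{i,j}$ shows every point of every slice has a neighbourhood inside its own slice, hence each $S_{i,j}$ is open; restricting to $S_{m,m}$ and using $\tau_{\textbf{BR}}|_{S_{m,m}}=\tau$ identifies the subspace topology on each open slice with a homeomorphic copy of $\tau$, which is exactly the direct sum topology.

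First I would fix $(i,s,j)$ and apply Proposition~\ref{proposition-2.3}$(ii)$ to get an open neighbourhood $U_{(i,s,j)}$ with $U_{(i,s,j)}\subseteq\bigcup\{S_{i-k,j-k}\colon k\geq 0\}$. It remains to cut off all the lower slices, i.e. to exclude $S_{i-k,j-k}$ for $k\geq 1$. Here is where the compact ideal hypothesis enters. Suppose $K$ is a compact left ideal of $S$ (the right and two-sided cases being symmetric). Consider the element $(i+1,1_S,i+1)\cdot(i,s,j)$; using \eqref{f3} one computes this lies in $S_{i+1,\,j+1}$. The key point: if $(m,a,n)\in S_{i-k,j-k}$ with $k\geq 1$, then left-multiplying by $(i+1,1_S,i+1)$ lands in a slice with first coordinate $i+1$, and the product's $S$-component involves $\theta$-images and, crucially for $a\in K$, stays in the ideal. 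I would instead work with the retraction onto the slice through the idempotent $(i,1_S,i)$ and $(j,1_S,j)$: for a point in a strictly lower slice, suitable one-sided translates by these idempotents detect the defect in the difference of coordinates, much as in the proof of Proposition~\ref{proposition-2.3}$(ii)$. Combining such a translate with membership in the compact ideal $K$ and the Hausdorff (hence, for compacta, normal) separation available, I can find a $\tau_{\textbf{BR}}$-neighbourhood of $(i,s,j)$ meeting the compact set $K_{i-k,j-k}$ in $\emptyset$ for the relevant $k$, and since only finitely many values of $k$ can interfere near a point (again by Proposition~\ref{proposition-2.3}$(ii)$ together with compactness pinning down the "tail"), intersecting finitely many such neighbourhoods yields a neighbourhood of $(i,s,j)$ inside $S_{i,j}$.

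The main obstacle, and the step requiring genuine care, is making precise how the \emph{compactness} of the ideal $K$ rules out the lower slices uniformly. The subtlety is that Proposition~\ref{proposition-2.3}$(ii)$ only gives a \emph{union} of infinitely many slices, so a priori a net from infinitely many distinct lower slices could converge to $(i,s,j)$; the compact ideal is exactly what prevents this. Concretely, I expect to argue that if $K$ is a compact (say two-sided) ideal, then for any $(m,a,n)$ the translate $(i,1_S,i)\cdot(m,a,n)\cdot(j,1_S,j)$ or a similar doubly-translated element lies in $K_{i,j}$ whenever $(m,a,n)$ lies in a slice below $S_{i,j}$, because passing through the idempotents forces the $S$-coordinate into the ideal; then compactness of $K_{i,j}$, separated from the point $(i,s,j)$ (which need not lie in $K_{i,j}$ — or if it does, one separates within the slice using that $\tau_{\textbf{BR}}|_{S_{i,j}}$ is Hausdorff and $K$ is closed), gives, via continuity of the translations, a neighbourhood of $(i,s,j)$ whose image under the translation misses $K_{i,j}$, hence a neighbourhood meeting no slice strictly below $S_{i,j}$. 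That neighbourhood, intersected with $U_{(i,s,j)}$ from Proposition~\ref{proposition-2.3}$(ii)$, is contained in $S_{i,j}$, completing the argument.
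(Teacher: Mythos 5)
Your overall strategy is the paper's: use Proposition~\ref{proposition-2.3}$(ii)$ to confine a neighbourhood of $(i,s,j)$ to $\bigcup_{k\geq 0}S_{i-k,j-k}$ and then use the compact ideal to excise the slices with $k\geq 1$. But the mechanism you propose for that excision does not work. The claim that ``passing through the idempotents forces the $S$-coordinate into the ideal'' is false: if $(m,a,n)\in S_{i-k,j-k}$ with $k\geq 1$, then by formula~(\ref{f3}) the translate $(i+1,1_S,i+1)\cdot(m,a,n)$ has middle coordinate $\theta^{i+1-m}(a)\in H(1_S)$, and likewise $(m,a,n)\cdot(j,1_S,j)$ has middle coordinate $\theta^{j-n}(a)\in H(1_S)$; a proper ideal contains no unit, so these translates can never land in $K_{i,j}$. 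Your fallback --- separating $(i,s,j)$ from each compact set $K_{i-k,j-k}$ --- also fails on two counts: you must exclude the \emph{entire} slice $S_{i-k,j-k}$, not merely its ideal part $K_{i-k,j-k}$, and there are infinitely many relevant $k$ with no justification offered for ``only finitely many values of $k$ can interfere'' (Proposition~\ref{proposition-2.3}$(ii)$ gives a union over all $k\geq 0$). So the central step is missing.

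The correct device is to translate by an element \emph{of the ideal} rather than by an idempotent. Fix $t\in L$ ($L$ the compact left ideal) and set $h(x)=x\cdot(j-1,t,j-1)$. For any $(m,a,n)\in S_{i-k,j-k}$ with $k\geq 1$ one has $n\leq j-1$, so by (\ref{f3}) the middle coordinate of $h\bigl((m,a,n)\bigr)$ is $a\cdot t$ or $\theta^{j-1-n}(a)\cdot t$, which lies in $L$ because $L$ is a \emph{left} ideal; together with Remark~\ref{remark-1.2} this gives $h^{-1}(L_{i-1,j-1})=\bigcup\{S_{i-k,j-k}\colon k\geq 1\}$ exactly. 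Since $L_{i-1,j-1}$ is compact (by Proposition~\ref{proposition-2.3}$(i)$) and hence closed in the Hausdorff space $\mathscr{B}(S,\mathbb{Z},\theta)$, and $h$ is a continuous right translation, this union of all lower slices is closed in one stroke, and $h\bigl((i,s,j)\bigr)=(i,s\cdot\theta(t),j)\notin L_{i-1,j-1}$, so $U_{(i,s,j)}\setminus h^{-1}(L_{i-1,j-1})$ is the desired neighbourhood inside $S_{i,j}$. This single translation by an ideal element is the idea your proposal lacks.
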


\begin{proof}
We consider the case when the semitopological semigroup $S$ has a
left compact ideal. In other cases the proof is similar. Let $L$ be
a left compact ideal in $S$. Then by Definition~\ref{definition-2.2}
there exists an integer $n$ such that the subsemigroup $S_{n,n}$ in
$\left(\mathscr{B}(S,\mathbb{Z},\theta), \tau_{\textbf{BR}}\right)$
is topologically isomorphic to the semitopological semigroup
$(S,\tau)$. Hence Proposition~\ref{proposition-2.3} implies that
$L_{i,j}$ is a compact subset of
$\left(\mathscr{B}(S,\mathbb{Z},\theta), \tau_{\textbf{BR}}\right)$
for all $i,j\in\mathbb{Z}$.

We fix an arbitrary element $(i,s,j)$ of the semigroup
$\mathscr{B}(S,\mathbb{Z},\theta)$, $i,j\in\mathbb{Z}$ and $s\in
S^1$. Now we fix an element $(i-1,t,j-1)$ from $L_{i-1,j-1}$ and
define a map $h\colon \mathscr{B}(S,\mathbb{Z},\theta)\rightarrow
\mathscr{B}(S,\mathbb{Z},\theta)$ by the formula
$h(x)=x\cdot(j-1,t,j-1)$. Then by
Proposition~\ref{proposition-2.3}$(ii)$ there exists an open
neighbourhood $U_{(i,s,j)}$ of the point $(i,s,j)$ in
$\left(\mathscr{B}(S,\mathbb{Z},\theta), \tau_{\textbf{BR}}\right)$
such that $U_{(i,s,j)}\subseteq\bigcup\left\{S_{i-k,j-k}\colon
k=0,1,2,3,\ldots\right\}$. Since left translations in
$\left(\mathscr{B}(S,\mathbb{Z},\theta), \tau_{\textbf{BR}}\right)$
are continuous we conclude that the full pre-image
$h^{-1}(L_{i-1,j-1})$ is a closed subset of the topological space
$\left(\mathscr{B}(S,\mathbb{Z},\theta), \tau_{\textbf{BR}}\right)$
and Remark~\ref{remark-1.2} implies that $h^{-1}(L_{i-1,j-1})=
\bigcup\left\{S_{i-k,j-k}\colon k=1,2,3,\ldots\right\}$. This
implies that an arbitrary element $(i,s,j)$ of the semigroup
$\mathscr{B}(S,\mathbb{Z},\theta)$, where $i,j\in\mathbb{Z}$ and
$s\in S^1$, has an open neighbourhood $U_{(i,s,j)}$ such that
$U_{(i,s,j)}\subseteq S_{i,j}$.
\end{proof}

Theorem~\ref{theorem-2.4} implies the following corollary:

\begin{corollary}[{\cite{FihelGutik2011}}]\label{corollary-2.5}
Let $\tau$ be a Hausdorff topology  on the extended bicyclic
semigroup $\mathscr{C}_{\mathbb{Z}}$. If
$(\mathscr{C}_{\mathbb{Z}},\tau)$ is a semitopological semigroup
then $(\mathscr{C}_{\mathbb{Z}},\tau)$ is the discrete  space.
\end{corollary}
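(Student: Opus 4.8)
The plan is to realize $(\mathscr{C}_{\mathbb{Z}},\tau)$ as a topological $\mathbb{Z}$-Bruck-Reilly extension of a trivial semigroup and then invoke Theorem~\ref{theorem-2.4}. Recall from the remark after the definition of $\mathscr{B}(S,\mathbb{Z},\theta)$ that if $S$ is a trivial (one-element) semigroup, then $\mathscr{B}(S,\mathbb{Z},\theta)\cong\mathscr{C}_{\mathbb{Z}}$ as semigroups. So first I would fix an identification $\mathscr{C}_{\mathbb{Z}}=\mathscr{B}(S,\mathbb{Z},\theta)$ with $S=\{1_S\}$ trivial, under which each subset $S_{m,n}=\{(m,1_S,n)\}$ is a singleton $\{(m,n)\}$.

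Next I would check that $\tau$ makes $(\mathscr{C}_{\mathbb{Z}},\tau)$ into a \emph{topological $\mathbb{Z}$-Bruck-Reilly extension of $(S,\tau_S)$} in the class of Hausdorff semitopological semigroups, in the sense of Definition~\ref{definition-2.2}, where $\tau_S$ is the (unique) topology on the singleton $S$. The homomorphism $\theta\colon S^1\to H(1_S)$ is forced to be the trivial map and is trivially continuous; $(\mathscr{C}_{\mathbb{Z}},\tau)\in\mathfrak{S}$ is the hypothesis; and $\tau|_{S_{m,m}}=\tau_S$ holds automatically since $S_{m,m}$ is a one-point subspace. Thus all the requirements of Definition~\ref{definition-2.2} are met.

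Now I would observe that the trivial semigroup $S$ is itself a compact (two-sided) ideal of $S$. Therefore Theorem~\ref{theorem-2.4} applies and tells us that $\tau=\tau_{\textbf{BR}}$ must be the direct sum topology $\tau_{\textbf{BR}}^{\textbf{ds}}$ on $\mathscr{B}(S,\mathbb{Z},\theta)$. But the direct sum topology is the product topology on $\mathbb{Z}\times S\times\mathbb{Z}$, and since $S$ is a single point with its unique topology, the product topology is simply the discrete topology on $\mathbb{Z}\times\mathbb{Z}=\mathscr{C}_{\mathbb{Z}}$. Hence $(\mathscr{C}_{\mathbb{Z}},\tau)$ is the discrete space.

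The proof is essentially an immediate specialization, so there is no serious obstacle; the only point requiring a moment's care is the bookkeeping that identifies the singleton subsets $S_{m,m}$ with points of $\mathscr{C}_{\mathbb{Z}}$ under the Warne-type coordinates in \eqref{f1}, so that "$\tau_{\textbf{BR}}$ restricted to $S_{m,m}$ equals $\tau_S$" is a vacuous condition and Definition~\ref{definition-2.2} is genuinely satisfied. Once that is in place, Theorem~\ref{theorem-2.4} does all the work.
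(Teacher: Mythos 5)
Your proof is correct and is exactly the route the paper takes: the paper derives Corollary~\ref{corollary-2.5} directly from Theorem~\ref{theorem-2.4} by viewing $\mathscr{C}_{\mathbb{Z}}$ as the $\mathbb{Z}$-Bruck-Reilly extension of the trivial semigroup, which is its own compact ideal, so the topology must be the direct sum (hence discrete) topology. Your added bookkeeping about Definition~\ref{definition-2.2} being vacuously satisfied on the singleton fibres $S_{m,m}$ is a correct and worthwhile explicit check.
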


\begin{theorem}\label{theorem-2.6}
Let $\left(\mathscr{B}(S,\mathbb{Z},\theta),
\tau_{\textbf{BR}}\right)$ be a topological
$\mathbb{Z}$-Bruck-Reilly extension of the topological inverse
semigroup $(S,\tau)$ in the class of topological inverse semigroups.
If the band $E(S)$ contains a minimal idempotent, then
$\tau_{\textbf{BR}}$ is the direct sum topology on
$\mathscr{B}(S,\mathbb{Z},\theta)$.
\end{theorem}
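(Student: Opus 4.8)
The plan is to follow the scheme of the proof of Theorem~\ref{theorem-2.4}, with the principal left ideal generated by the minimal idempotent playing the role of the compact ideal there. As in Theorem~\ref{theorem-2.4} it suffices to show that every $(i,s,j)\in\mathscr{B}(S,\mathbb{Z},\theta)$ has an open neighbourhood contained in $S_{i,j}$. Indeed, then each $S_{i,j}$ is open, hence also closed (its complement is the union of the remaining $S_{k,l}$), so $\left(\mathscr{B}(S,\mathbb{Z},\theta),\tau_{\textbf{BR}}\right)$ is the topological direct sum of the subspaces $S_{i,j}$; since by Proposition~\ref{proposition-2.3}$(i)$ the coordinate bijections $(i,s,j)\mapsto s$ identify each $\left(S_{i,j},\tau_{\textbf{BR}}|_{S_{i,j}}\right)$ with $(S,\tau)$, this topological direct sum is exactly $\left(\mathbb{Z}\times S\times\mathbb{Z},\tau_{\textbf{BR}}^{\textbf{ds}}\right)$.

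The first step is purely algebraic. Since $(S,\tau)$ is a topological inverse semigroup, $E(S)$ is a semilattice, and a \emph{minimal} idempotent $f$ of $E(S)$ is automatically its least element: $ef\leqslant f$ for every $e\in E(S)$, so minimality forces $ef=f$, i.e. $f\leqslant e$. Using $f\leqslant s^{-1}s$ for all $s\in S$ one checks that $S^{1}f$ coincides with the $\mathscr{L}$-class of $f$, namely $S^{1}f=\{x\in S^{1}\colon x^{-1}x=f\}$ (if $x=sf$ then $x^{-1}x=fs^{-1}sf=f$, and conversely $x^{-1}x=f$ gives $x=x\cdot x^{-1}x=xf$). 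In particular, for each $c\in S^{1}f$ and each $p\in\mathbb{Z}$ one has $(p,c,p)^{-1}(p,c,p)=(p,f,p)$.

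Now fix $(i,s,j)$ and, by Proposition~\ref{proposition-2.3}$(ii)$, choose an open neighbourhood $U_{(i,s,j)}\subseteq\bigcup\{S_{i-k,j-k}\colon k=0,1,2,\ldots\}$; every point of this union has the form $(i-k,a,j-k)$ with $k\geqslant 0$ and $a\in S^{1}$. Consider the map
\[
\Phi\colon\mathscr{B}(S,\mathbb{Z},\theta)\to\mathscr{B}(S,\mathbb{Z},\theta),\qquad
\Phi(x)=\bigl(x\cdot(j-1,f,j-1)\bigr)^{-1}\cdot\bigl(x\cdot(j-1,f,j-1)\bigr),
\]
which is continuous because $\left(\mathscr{B}(S,\mathbb{Z},\theta),\tau_{\textbf{BR}}\right)$ is a topological \emph{inverse} semigroup. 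A routine computation with formula~(\ref{f3}) shows that for $k\geqslant 1$ we have $(i-k,a,j-k)\cdot(j-1,f,j-1)=(i-1,c,j-1)$ with $c\in S^{1}f$ (explicitly $c=af$ when $k=1$ and $c=\theta^{k-1}(a)f$ when $k\geqslant 2$), hence $\Phi\bigl((i-k,a,j-k)\bigr)=(j-1,f,j-1)$ by the previous paragraph; while $(i,a,j)\cdot(j-1,f,j-1)\in S_{i,j}$, so $\Phi\bigl((i,a,j)\bigr)\in S_{j,j}$, which differs from $(j-1,f,j-1)$. Therefore $F:=\Phi^{-1}\bigl(\{(j-1,f,j-1)\}\bigr)$ is closed (a point is closed in a Hausdorff space), $F$ contains $U_{(i,s,j)}\cap\bigcup\{S_{i-k,j-k}\colon k\geqslant 1\}$, and $F\cap S_{i,j}=\emptyset$. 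Consequently $U_{(i,s,j)}\setminus F$ is an open neighbourhood of $(i,s,j)$ with $U_{(i,s,j)}\setminus F\subseteq S_{i,j}$, as required.

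The main obstacle is the second step together with the choice of $\Phi$: one must notice that the minimality of $f$ collapses the principal left ideal $S^{1}f$ onto the single $\mathscr{L}$-class of $f$, and then design a single continuous self-map of $\mathscr{B}(S,\mathbb{Z},\theta)$ --- right translation by $(j-1,f,j-1)$ followed by $x\mapsto x^{-1}x$ --- that sends every ``lower row'' $S_{i-k,j-k}$ with $k\geqslant 1$ to the one idempotent $(j-1,f,j-1)$ while keeping $S_{i,j}$ away from it. Everything else is bookkeeping with~(\ref{f3}) in the style of Theorem~\ref{theorem-2.4}.
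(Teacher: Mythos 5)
Your proof is correct and follows essentially the same route as the paper's: both reduce to Proposition~\ref{proposition-2.3}$(ii)$ and then use the fact that a minimal idempotent of a semilattice is its least element, together with continuity of multiplication and inversion, to produce a closed set that contains every row $S_{i-k,j-k}$ with $k\geqslant 1$ but misses $S_{i,j}$. The only difference is in the choice of that closed set: the paper takes $\mathfrak{l}^{-1}\left({\uparrow}(i-1,e_0,i-1)\right)\cup \mathfrak{r}^{-1}\left({\uparrow}(j-1,e_0,j-1)\right)$, whereas you take the preimage of the single point $(j-1,f,j-1)$ under the continuous map $x\mapsto\bigl(x\cdot(j-1,f,j-1)\bigr)^{-1}\bigl(x\cdot(j-1,f,j-1)\bigr)$.
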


\begin{proof}
Let $e_0$ be a minimal element of the band $E(S)$. Then $(i,e_0,i)$
is a minimal idempotent in the band of the subsemigroup $S_{i,i}$
for every integer~$i$.

Since the semigroup operation in
$\left(\mathscr{B}(S,\mathbb{Z},\theta), \tau_{\textbf{BR}}\right)$
is continuous we conclude that for every idempotent $\iota$ from the
semigroup $\mathscr{B}(S,\mathbb{Z},\theta)$ the set
${\uparrow}\iota=\{\varepsilon\in
E(\mathscr{B}(S,\mathbb{Z},\theta))\colon
\varepsilon\cdot\iota=\iota\cdot\varepsilon=\iota\}$ is a closed
subset in $E(\mathscr{B}(S,\mathbb{Z},\theta))$ with the topology
induced from $\left(\mathscr{B}(S,\mathbb{Z},\theta),
\tau_{\textbf{BR}}\right)$. We define the maps $\mathfrak{l}\colon
\mathscr{B}(S,\mathbb{Z},\theta)\rightarrow
E(\mathscr{B}(S,\mathbb{Z},\theta))$ and $\mathfrak{r}\colon
\mathscr{B}(S,\mathbb{Z},\theta)\rightarrow
E(\mathscr{B}(S,\mathbb{Z},\theta))$ by the formulae
$\mathfrak{l}(x)=x\cdot x^{-1}$ and $\mathfrak{r}(x)=x^{-1}\cdot x$.
We fix any element $(i,s,j)\in\mathscr{B}(S,\mathbb{Z},\theta)$.
Since the semigroup operation and inversion are continuous in
$\left(\mathscr{B}(S,\mathbb{Z},\theta), \tau_{\textbf{BR}}\right)$
we conclude that the sets
$\mathfrak{l}^{-1}\left({\uparrow}(i-1,e_0,i-1)\right)$ and
$\mathfrak{r}^{-1}\left({\uparrow}(j-1,e_0,j-1)\right)$ are closed
in $\left(\mathscr{B}(S,\mathbb{Z},\theta),\tau_{\textbf{BR}}\right)$. Then by
Proposition~\ref{proposition-2.3}$(ii)$ there exists an open
neighbourhood $U_{(i,s,j)}$ of the point $(i,s,j)$ in
$\left(\mathscr{B}(S,\mathbb{Z},\theta),
\tau_{\textbf{BR}}\right)$ such that
$U_{(i,s,j)}\subseteq\bigcup\{S_{i-k,j-k}\colon k=0,1,2,3,\ldots\}$.
Now elementary calculations  show that
\begin{equation*}
W_{(i,s,j)}=U_{(i,s,j)}\setminus
\left(\mathfrak{l}^{-1}\left({\uparrow}(i-1,e_0,i-1)\right)\cup
\mathfrak{r}^{-1}\left({\uparrow}(j-1,e_0,j-1)\right)\right)
\subseteq S_{i,j}.
\end{equation*}
This completes the proof of our theorem.
\end{proof}

The following examples show that the arguments stated in
Theorems~\ref{theorem-2.4} and ~\ref{theorem-2.6} are important.

\begin{example}\label{example-2.7}
Let $N_+=\{0,1,2,3,\ldots\}$ be the discrete topological space with
the usual operation of addition of integers. We define a topology
$\tau_{\textbf{BR}}$ on $\mathscr{B}(N_+,\mathbb{Z})$ as follows:
\begin{itemize}
  \item[$(i)$] for every point $x\in N_+\setminus\{0\}$ the base
   of the topology $\tau_{\textbf{BR}}$ at $(i,x,j)$ coincides with the base of
   the direct sum topology $\tau_{\textbf{BR}}^{\textbf{ds}}$ at
   $(i,x,j)$ for all $i,j\in\mathbb{Z}$;
  \item[$(ii)$] for any $i,j\in\mathbb{Z}$ the family
  $\mathscr{B}_{(i,0,j)}=\big\{U^k_{i,j}\colon
  k=1,2,3,\ldots\big\}$, where
\begin{equation*}
 U^k_{i,j}=\{(i,0,j)\}\cup\{(i-1,s,j-1)\colon s=k, k+1,k+2,k+3,\ldots\},
\end{equation*}
   is the base of the topology $\tau_{\textbf{BR}}$ at the point
   $(i,0,j)$.
\end{itemize}
Simple verifications show that
$\left(\mathscr{B}(N_+,\mathbb{Z}),\tau_{\textbf{BR}}\right)$ is a
Hausdorff topological semigroup.
\end{example}

\begin{example}\label{example-2.8}
Let $N_\textbf{m}=\{0,1,2,3,\ldots\}$ be the discrete topological
space with the semigroup operation $x\cdot y=\max\{x,y\}$. Now we
identify the set $\mathscr{B}(N_\textbf{m},\mathbb{Z})$ with
$\mathscr{B}(N_+,\mathbb{Z})$. Let $\tau_{\textbf{BR}}$ be the
topology on $\mathscr{B}(N_+,\mathbb{Z})$ defined as in
Example~\ref{example-2.7}. Then simple verifications show that
$\left(\mathscr{B}(N_\textbf{m},\mathbb{Z}),\tau_{\textbf{BR}}\right)$
is a Hausdorff topological inverse semigroup.
\end{example}

\begin{definition}\label{definition-2.9}
We shall say that a semitopological semigroup $S$ has an \emph{open
ideal property} (or shortly, $S$ is an
\emph{\textsf{OIP}-semigroup}) if there exist a family
$\mathscr{I}=\{I_\alpha\}_{\alpha\in\mathcal{A}}$ of open ideals in
$S$ such that for every $x\in S$ there exist an open ideal
$I_\alpha\in\mathscr{I}$ and open neighbourhood $U(x)$ of the point
$x$ in $S$ such that $U(x)\cap I_\alpha=\varnothing$.
\end{definition}

We observe that Definition~\ref{definition-2.9} implies that the family
$\mathscr{I}=\{I_\alpha\}_{\alpha\in\mathcal{A}}$ of open ideals in
$S$ satisfies the finite intersection property and every
semitopological \textsf{OIP}-semigroup does not contain a minimal
ideal.

\begin{theorem}\label{theorem-2.10}
Let $(S,\tau)$ be a Hausdorff semitopological \textsf{OIP}-semigroup
and $\theta\colon S^1\rightarrow H(1_S)$ be a continuous
homomorphism. Then there exists a topological
$\mathbb{Z}$-Bruck-Reilly extension
$\left(\mathscr{B}(S,\mathbb{Z},\theta), \tau_{\textbf{BR}}\right)$
of $(S,\tau)$ in the class of semitopological semigroups such that
the topology $\tau_{\textbf{BR}}$ is strictly coarser than the direct
sum topology $\tau_{\textbf{BR}}^\textbf{ds}$ on
$\mathscr{B}(S,\mathbb{Z},\theta)$.
\end{theorem}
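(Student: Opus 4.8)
The plan is to build the coarsened topology $\tau_{\textbf{BR}}$ on $\mathscr{B}(S,\mathbb{Z},\theta)$ by imitating the construction of Examples~\ref{example-2.7} and~\ref{example-2.8}: keep the direct sum topology essentially everywhere, but at each point of the form $(i,1_S,j)$ (more generally, at points where a neighbourhood can ``leak down'' to the stratum $S_{i-1,j-1}$) adjoin to each usual basic neighbourhood a trace of one of the open ideals $I_\alpha$ sitting in the copy $S_{i-1,j-1}$. Concretely, fix the family $\mathscr{I}=\{I_\alpha\}_{\alpha\in\mathcal{A}}$ of open ideals witnessing that $(S,\tau)$ is an \textsf{OIP}-semigroup. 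For a point $(i,s,j)$ with $s\in S$ let the base at $(i,s,j)$ be $\{(i,U,j)\colon U\in\mathcal{B},\ s\in U\}$ as before; for a point $(i,1_S,j)$ let the base consist of the sets $\{(i,1_S,j)\}\cup (I_\alpha)_{i-1,j-1}$ as $\alpha$ ranges over $\mathcal{A}$ (one may also intersect with the usual basic neighbourhoods of $(i,1_S,j)$ inside $S_{i,j}$, but since $1_S$ is isolated in $S^1$ by our standing assumption, the singleton $\{(i,1_S,j)\}$ suffices on that stratum). Hausdorffness of this topology follows from the Hausdorffness of $\tau$ together with the finite intersection property of $\mathscr{I}$ (which, as remarked after Definition~\ref{definition-2.9}, guarantees that no $I_\alpha$ is empty, so the new neighbourhoods of distinct points $(i,1_S,j)$, $(i',1_S,j')$ can be separated using the \textsf{OIP}-condition applied at $1_S$ if they share a stratum, and trivially otherwise).

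Next I would check that $\tau_{\textbf{BR}}$ is strictly coarser than $\tau_{\textbf{BR}}^{\textbf{ds}}$ and restricts to $\tau$ on the strata $S_{m,m}$. Coarseness is clear since every new basic neighbourhood of a point $(i,s,j)$ with $s\neq 1_S$ is a $\tau_{\textbf{BR}}^{\textbf{ds}}$-neighbourhood, and every new basic neighbourhood $\{(i,1_S,j)\}\cup(I_\alpha)_{i-1,j-1}$ of $(i,1_S,j)$ contains $\{(i,1_S,j)\}$ which is open in $\tau_{\textbf{BR}}^{\textbf{ds}}$; it is \emph{strictly} coarser because the point $(i,1_S,j)$ has no $\tau_{\textbf{BR}}$-neighbourhood contained in $S_{i,j}$ — any basic neighbourhood meets $(I_\alpha)_{i-1,j-1}\neq\varnothing$. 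The restriction to $S_{m,m}$ agrees with $\tau$ on the points of $S_{m,m}\setminus\{(m,1_S,m)\}$ by construction, and on $(m,1_S,m)$ because $1_S$ is isolated in $(S^1,\tau)$ and the trace of each $\{(m,1_S,m)\}\cup(I_\alpha)_{m-1,m-1}$ on $S_{m,m}$ is exactly $\{(m,1_S,m)\}$; hence $\tau_{\textbf{BR}}|_{S_{m,m}}=\tau$. Continuity of $\theta\colon S^1\to H(1_S)$ is unaffected since we have not changed $\tau$ on $S$.

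The heart of the proof, and the step I expect to be the main obstacle, is verifying that the semigroup operation~(\ref{f3}) is separately continuous in $\tau_{\textbf{BR}}$. Away from the ``diagonal-unit'' points the argument is routine because the operation respects the stratification and locally looks like the separately continuous operation of $(S,\tau)$ composed with translations on the $\mathbb{Z}$-coordinates. The delicate cases are the left and right translations by a point $(m,t,n)$ evaluated at, or landing on, a point of the form $(i,1_S,j)$: one must show that the preimage of a basic neighbourhood $\{(k,1_S,l)\}\cup(I_\beta)_{k-1,l-1}$ under such a translation contains a basic neighbourhood of the form $\{(i,1_S,j)\}\cup(I_\alpha)_{i-1,j-1}$. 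This is where the ideal property is used decisively: multiplying an element of $(I_\alpha)_{i-1,j-1}$ by $(m,t,n)$ — using~(\ref{f3}) — produces, in the relevant coordinate case, an element whose $S^1$-component is of the form $\theta^{\ast}(t)\cdot a$ or $a\cdot\theta^{\ast}(t)$ with $a\in I_\alpha$, and since $I_\alpha$ is a two-sided ideal of $S$ (and $\theta^{\ast}(t)\in S^1$), this component again lies in $I_\alpha\subseteq I_\beta$ after choosing $I_\alpha$ inside $I_\beta$ (possible by a finite-intersection argument, or simply by taking $\alpha=\beta$ when the strata match up). One checks separately that the three branches of~(\ref{f3}) send the new basic neighbourhood of $(i,1_S,j)$ into the correct stratum $S_{k,l}$ and into the chosen ideal-neighbourhood there; the case analysis $j<m$, $j=m$, $j>m$ (and symmetrically on the other side) is the only place real bookkeeping is required, and the \textsf{OIP}-hypothesis is exactly what makes every branch go through. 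Once separate continuity is established, $\left(\mathscr{B}(S,\mathbb{Z},\theta),\tau_{\textbf{BR}}\right)$ is a semitopological semigroup, hence a topological $\mathbb{Z}$-Bruck-Reilly extension of $(S,\tau)$ in the class of semitopological semigroups, with $\tau_{\textbf{BR}}$ strictly coarser than $\tau_{\textbf{BR}}^{\textbf{ds}}$, as required.
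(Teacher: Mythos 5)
Your construction does not prove the theorem as stated: it only works for an annihilating $\theta$ (i.e.\ for the $\mathbb{Z}$-Bruck extension of Theorem~\ref{theorem-2.10a}), not for a general continuous homomorphism $\theta\colon S^1\rightarrow H(1_S)$. The problem is with the points you allow to ``leak'': you attach the whole set $(I_\alpha)_{i-1,j-1}$ to $(i,1_S,j)$, but whenever a leaked element $(i-1,a,j-1)$, $a\in I_\alpha$, is translated so that it climbs one stratum, formula~(\ref{f3}) replaces $a$ by $\theta(a)$. For instance $(i-1,a,j-1)\cdot(j,s,n)=(i,\theta(a)\cdot s,n)$, while $(i,1_S,j)\cdot(j,s,n)=(i,s,n)$; for separate continuity you need $\theta(a)\cdot s$ to land in a prescribed neighbourhood $W$ of $s$, and since $\theta(I_\alpha)$ is an uncontrolled subset of $H(1_S)$ this fails. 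The ideal property of $I_\alpha$, which you invoke at this step, is of no help here because the product $\theta(a)\cdot s$ has left the stratum where the ideal trace lives. The paper's construction repairs exactly this by taking the leaked part to be $\left(\theta^{-1}(U_a)\cap I_\alpha\right)_{i-1,j-1}$, so that after the forced application of $\theta$ the component lies in $U_a$; and, for the same reason, it must coarsen the topology at \emph{every} point $(i,h,j)$ with $h\in H(1_S)$ (translations by units move $(i,1_S,j)$ to arbitrary unit points, which therefore cannot keep their direct-sum neighbourhoods).

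A second, independent error: your claim that ``$1_S$ is isolated in $S^1$ by our standing assumption'' is only valid when $S$ has no identity and one is adjoined. If $S$ is already a monoid with a nondiscrete topology at $1_S$, then the trace of your basic sets $\{(m,1_S,m)\}\cup(I_\alpha)_{m-1,m-1}$ on $S_{m,m}$ is the singleton $\{(m,1_S,m)\}$, so $\tau_{\textbf{BR}}|_{S_{m,m}}\neq\tau$ and the resulting object is not a topological $\mathbb{Z}$-Bruck-Reilly extension in the sense of Definition~\ref{definition-2.2}. The remaining points (Hausdorffness via the finite intersection property of $\mathscr{I}$, and strict coarseness because every neighbourhood of a unit point meets the nonempty set $(I_\alpha)_{i-1,j-1}$) are fine and agree with the paper.
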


\begin{proof}
Let $\mathscr{I}=\{I_\alpha\}_{\alpha\in\mathcal{A}}$  be a family
of open ideals in $(S,\tau)$ such that for every $x\in S$ there
exists $I_\alpha\in\mathscr{I}$ and open neighbourhood $U(x)$ of the
point $x$ in $(S,\tau)$ such that $U(x)\cap I_\alpha=\varnothing$.

We shall define a base of the topology $\tau_{\textbf{BR}}$ on
$\mathscr{B}(S,\mathbb{Z},\theta)$ in the following way:
\begin{itemize}
 \item[$(1)$] for every $s\in S\setminus H(1_S)$ and
  $i,j\in\mathbb{Z}$ the base of the topology
  $\tau_{\textbf{BR}}$ at the point $(i,s,j)$ coincides with a
  base of the direct sum topology $\tau_{\textbf{BR}}^\textbf{ds}$
  at $(i,s,j)$; \; and

 \item[$(2)$] the family
 \begin{equation*}
 \mathscr{B}_{(i,a,j)}=\big\{
  (U_a)^\alpha_{i,j}=(U_a)_{i,j}\cup
  \left(\theta^{-1}(U_a)\cap I_\alpha\right)_{i-1,j-1}\colon U_a\in
  \mathscr{B}_{a},I_\alpha\in\mathscr{I}\big\},
 \end{equation*}
  where $\mathscr{B}_{a}$ is a base of the topology $\tau$ at the
  point $a$ in $S$, is a base of the topology $\tau_{\textbf{BR}}$
  at the point $(i,a,j)$, for every $a\in H(1_S)$ and all
  $i,j\in\mathbb{Z}$.
\end{itemize}

Since  $(S,\tau)$ is a Hausdorff semitopological
\textsf{OIP}-semigroup we conclude that $\tau_{\textbf{BR}}$ is a
Hausdorff topology on $\mathscr{B}(S,\mathbb{Z},\theta)$ and
moreover $\tau_{\textbf{BR}}$ is a proper subfamily of
$\tau_{\textbf{BR}}^\textbf{ds}$. Hence $\tau_{\textbf{BR}}$ is a
coarser topology on $\mathscr{B}(S,\mathbb{Z},\theta)$ than
$\tau_{\textbf{BR}}^\textbf{ds}$.

Now we shall show that the semigroup operation on
$(\mathscr{B}(S,\mathbb{Z},\theta),\tau_{\textbf{BR}})$ is
separately continuous. Since by Proposition~\ref{proposition-2.1}
the semigroup operation on
$(\mathscr{B}(S,\mathbb{Z},\theta),\tau_{\textbf{BR}}^\textbf{ds})$
is separately continuous we conclude that the  definition of the
topology $\tau_{\textbf{BR}}$ on $\mathscr{B}(S,\mathbb{Z},\theta)$
implies that it is sufficient to show that the semigroup operation
in $(\mathscr{B}(S,\mathbb{Z},\theta),\tau_{\textbf{BR}})$ is
separately continuous in the following three cases:
\begin{equation*}
1)~(i,h,j)\cdot(m,g,n); \quad ~2)~(i,h,j)\cdot(m,s,n); \quad \hbox{and } \quad
  ~3)~(m,s,n)\cdot(i,h,j),
\end{equation*}
where $s\in S\setminus H(1_S)$, $g,h\in H(1_S)$ and
$i,j,m,n\in\mathbb{Z}$.

Consider case $1)$. Then we have that
\begin{equation*}\label{f7}
 (i,h,j)\cdot(m,g,n)=
 \left\{
   \begin{array}{ll}
     (i-j+m,\theta^{m-j}(h)\cdot g,n), & \hbox{if~} j<m; \\
     (i,h\cdot g,n), & \hbox{if~} j=m; \\
     (i,h\cdot\theta^{j-m}(g),n-m+j), & \hbox{if~} j>m.
   \end{array}
 \right.
\end{equation*}

Suppose that $j<m$. Then the separate continuity of the semigroup
operation in $(S,\tau)$ and the continuity of the homomorphism
$\theta\colon S\rightarrow H(1_S)$ imply that for every open
neighbourhood $U_{\theta^{m-j}(h)\cdot g}$ of the point
$\theta^{m-j}(h)\cdot g$ in $(S,\tau)$ there exist open
neighbourhoods $V_h$ and $W_g$ of the points $h$ and $g$ in
$(S,\tau)$, respectively, such that
\begin{equation*}
    \theta^{m-j}(h)\cdot W_g\subseteq U_{\theta^{m-j}(h)\cdot g}
    \qquad \hbox{and} \qquad
    \theta^{m-j}(V_h)\cdot g\subseteq U_{\theta^{m-j}(h)\cdot g}.
\end{equation*}
Hence for every $I_\alpha\in\mathscr{I}$ we have that
\begin{equation*}
\begin{split}
 &(i,h,j)\, \cdot \left(W_g\right)^\alpha_{m,n}
   \subseteq (i,h,j)\cdot\big(\left(W_g\right)_{m,n}\cup
   \left(\theta^{-1}(W_g)\cap I_\alpha\right)_{m-1,n-1}\big)\subseteq\\
   \subseteq & \big((i,h,j)\cdot\left(W_g\right)_{m,n}\big)\cup
   \big((i,h,j)\cdot\left(\theta^{-1}(W_g)\cap
   I_\alpha\right)_{m-1,n-1}\big)
   \subseteq\\
   &\!\!\!\!\!
\left\{\!\!\!
  \begin{array}{ll}
    \!\left(\theta^{m-j}(h){\cdot} W_g\right)_{i{-}j{+}m,n}\!{\cup}\!
    \left(\theta^{m{-}1{-}j}(h){\cdot}\!
    \left(\theta^{{-}1}(W_{g}){\cap} I_\alpha\right)\right)_{i{-}j{+}m{-}1,n{-}1}{,}
     & \!\!\!\!\hbox{if~} j{<}m{-}1;\\
    \left(\theta(h)\cdot W_g\right)_{i-j+m,n}\cup
    \left(h\cdot
    \left(\theta^{-1}(W_g)\cap I_\alpha\right)\right)_{i,n-1},
     & \!\!\!\!\hbox{if~} j{=}m{-}1
  \end{array}
\right. \\
  \subseteq & \left(U_{\theta^{m-j}(h)\cdot
  g}\right)_{i-j+m,n}^\alpha,
\end{split}
\end{equation*}
because
$\theta\left(\theta^{m-1-j}(h)\cdot\left(\theta^{-1}(W_g)\cap
I_\alpha\right)\right)\subseteq\theta^{m-j}(h)\cdot W_g\subseteq
U_{\theta^{m-j}(h)\cdot g}$, and
\begin{equation*}
\begin{split}
  & \left(V_h\right)^\alpha_{i,j} \cdot (m,g,n)\subseteq
    \big(\left(V_h\right)_{i,j}\cup\left(\theta^{-1}(V_h)\cap
    I_\alpha\right)_{i-1,j-1}\big)\cdot (m,g,n)\subseteq \\
  \subseteq &
  \big(\left(V_h\right)_{i,j}\cdot (m,g,n)\big)\cup
  \big(\left(\theta^{-1}(V_h)\cap I_\alpha\right)_{i-1,j-1}\cdot
  (m,g,n)\big)\subseteq \\
    \subseteq &
  \left(\theta^{m-j}(V_h)\cdot g\right)_{i-j+m,n}\cup
  \left(\theta^{m-j+1}\left(\theta^{-1}(V_h)\cap I_\alpha\right)
  \cdot g\right)_{i-j+m,n}\subseteq \\
    \subseteq &
  \left(\theta^{m-j}(V_h)\cdot g\right)_{i-j+m,n}\cup
  \left(\theta^{m-j}(V_h)\cdot g\right)_{i-j+m,n}\subseteq \\
    \subseteq &
  \left(\theta^{m-j}(V_h)\cdot g\right)_{i-j+m,n}\subseteq
    \left(U_{\theta^{m-j}(h)\cdot
  g}\right)_{i-j+m,n}\subseteq
  \left(U_{\theta^{m-j}(h)\cdot
  g}\right)_{i-j+m,n}^\alpha.
\end{split}
\end{equation*}

Suppose that $j=m$. Then the separate continuity of the semigroup
operation in $(S,\tau)$ implies that for every open neighbourhood
$U_{h\cdot g}$ of the point $h\cdot g$ in $(S,\tau)$ there exist
open neighbourhoods $V_h$ and $W_g$ of the points $h$ and $g$ in
$(S,\tau)$, respectively, such that
\begin{equation*}
    V_h\cdot g\subseteq U_{h\cdot g} \qquad \hbox{and} \qquad
    h\cdot W_g\subseteq U_{h\cdot g}.
\end{equation*}
Then for every $I_\alpha\in\mathscr{I}$ we have that
\begin{equation*}
\begin{split}
  \left(V_h\right)^\alpha_{i,j}&{\cdot}(m,g,n)
  {\subseteq}
   \big(\left(V_h\right)_{i,j}{\cdot}(m,g,n)\big)\cup
   \big(\left(\theta^{-1}(V_h)\cap
   I_\alpha\right)_{i-1,j-1}{\cdot}(m,g,n)\big)\subseteq \\
  \subseteq &
   \left(V_h\cdot g\right)_{i,n}\cup
   \left(\theta\left(\theta^{-1}(V_h)\cap
   I_\alpha\right)\cdot g\right)_{i,n}\subseteq
   \left(V_h\cdot g\right)_{i,n}\cup
   \left(V_h\cdot g\right)_{i,n}= \\
  = &
   \left(V_h\cdot g\right)_{i,n} \subseteq
   \left(U_{h\cdot g}\right)^\alpha_{i,n},
\end{split}
\end{equation*}
and
\begin{equation*}
\begin{split}
  (i,h,j)&{\cdot}\left(W_g\right)^\alpha_{m,n}
  {\subseteq}
   \big((i,h,j){\cdot}\left(W_g\right)_{m,n}\big){\cup}
   \big((i,h,j){\cdot}\left(\theta^{-1}(W_g){\cap}
   I_\alpha\right)_{m-1,n-1}\big)\subseteq \\
  \subseteq &
   \left(h\cdot W_g\right)_{i,n}\cup
   \left(h\cdot\theta\left(\theta^{-1}(W_g)\cap
   I_\alpha\right)\right)_{i,n}\subseteq
   \left(h\cdot W_g\right)_{i,n}\cup
   \left(h\cdot W_g\right)_{i,n}=\\
   =&\left(h\cdot
   W_g\right)_{i,n} \subseteq
   \left(U_{h\cdot g}\right)_{i,n}^\alpha.
\end{split}
\end{equation*}

Suppose that $j>m$. Then the separate continuity of the semigroup
operation in $(S,\tau)$ and the continuity of the homomorphism
$\theta\colon S\rightarrow H(1_S)$ imply that for every open
neighbourhood $U_{h\cdot \theta^{j-m}(g)}$ of the point $h\cdot
\theta^{j-m}(g)$ in $(S,\tau)$ there exist open neighbourhoods $V_h$
and $W_g$ of the points $h$ and $g$ in $(S,\tau)$, respectively,
such that
\begin{equation*}
    h\cdot \theta^{j-m}(W_g)\subseteq U_{h\cdot\theta^{j-m}(g)}
    \qquad \hbox{and} \qquad
    V_h\cdot \theta^{j-m}(g)\subseteq U_{h\cdot\theta^{j-m}(g)}.
\end{equation*}
Hence for every $I_\alpha\in\mathscr{I}$ we have that
\begin{equation*}
\begin{split}
  (i,h,j) &{\cdot}\left(W_g\right)^\alpha_{m,n}
  {\subseteq}
   \big((i,h,j){\cdot}\left(W_g\right)_{m,n}\big){\cup}
   \big((i,h,j){\cdot}\left(\theta^{-1}(W_g){\cap}
   I_\alpha\right)_{m-1,n-1}\big){\subseteq} \\
  \subseteq &
   \left(h\cdot\theta^{j-m}(W_g)\right)_{i,n-m+j}\cup
   \left(h\cdot\theta^{j-m+1}\left(\theta^{-1}(W_g)\cap
   I_\alpha\right)\right)_{i,n-m+j}\subseteq \\
  \subseteq &
   \left(h\cdot\theta^{j-m}(W_g)\right)_{i,n-m+j}\cup
   \left(h\cdot\theta^{j-m}(W_g)\right)_{i,n-m+j}=\\
   =&\left(h\cdot\theta^{j-m}(W_g)\right)_{i,n-m+j}
  \subseteq
   \left(U_{h\cdot\theta^{j-m}(g)}\right)^\alpha_{i,n-m+j},
\end{split}
\end{equation*}
and
\begin{equation*}
\begin{split}
  \!\!&\left(V_h\right)^\alpha_{i,j} {\cdot}(m,g,n)
  {\subseteq}
   \big(\left(V_h\right)_{i,j}{\cdot}(m,g,n)\big){\cup}
   \big(\left(\theta^{-1}(V_h){\cap}
   I_\alpha\right)_{i{-}1,j{-}1}{\cdot}(m,g,n)\big){\subseteq} \\
  &\!\!\!\!\!\!
\left\{\!\!\!
  \begin{array}{ll}
    \left(V_h{\cdot}\theta^{j{-}m}(g)\right)_{i,n{-}m{+}j}\!{\cup}
    \left(\left(\theta^{-1}(V_h)\cap I_\alpha\right)
    {\cdot} g\right)_{i{-}1,n},
   & \!\!\hbox{if~} j{-}1{=}m;\\
    \left(V_h{\cdot}\theta^{j{-}m}(g)\right)_{i,n{-}m{+}j}\!{\cup}
    \left(\left(\theta^{-1}(V_h){\cap} I_\alpha\right)\!
    {\cdot} \theta^{j{-}1{-}m}(g)\right)_{i{-}1,n{-}m{+}j{-}1}{,}
   & \!\!\hbox{if~} j{-}1{>}m
  \end{array}
\right.\\
\subseteq &
   \left(U_{h\cdot\theta^{j-m}(g)}\right)^\alpha_{i,n-m+j},
\end{split}
\end{equation*}
because $\theta\left(\left(\theta^{-1}(V_h)\cap
I_\alpha\right)\cdot\theta^{j-1-m}(g)\right)=V_h\cdot\theta^{j-m}(g)
\subseteq U_{h\cdot\theta^{j-m}(g)}$.

We observe that if $g\in H(1_S)$ and $s\in S\setminus H(1_S)$ then
$g\cdot s, s\cdot g\in S\setminus H(1_S)$. Otherwise, if $g\cdot
s\in H(1_S)$ then we have that $g^{-1}\cdot g\cdot s=1_S\cdot s=s\in
H(1_S)$, which contradicts that every translation on an element of
the group of units of $S$ is a bijective map (see \cite[Vol.~1,
p.~18]{CHK}).

Consider case $2)$. Then we have that
\begin{equation*}\label{f8}
 (i,h,j)\cdot(m,s,n)=
 \left\{
   \begin{array}{ll}
     (i-j+m,\theta^{m-j}(h)\cdot s,n), & \hbox{if~} j<m; \\
     (i,h\cdot s,n), & \hbox{if~} j=m; \\
     (i,h\cdot\theta^{j-m}(s),n-m+j), & \hbox{if~} j>m.
   \end{array}
 \right.
\end{equation*}

Suppose that $j<m$. Then the separate continuity of the semigroup
operation in $(S,\tau)$ and the continuity of the homomorphism
$\theta\colon S\rightarrow H(1_S)$ imply that for every open
neighbourhood $U_{\theta^{m-j}(h)\cdot s}$ of the point
$\theta^{m-j}(h)\cdot s$ in $(S,\tau)$ there exist open
neighbourhoods $V_h$ and $W_s$ of the points $h$ and $g$ in
$(S,\tau)$, respectively, such that
\begin{equation*}
    \theta^{m-j}(h)\cdot W_s\subseteq U_{\theta^{m-j}(h)\cdot s}
    \qquad \hbox{and} \qquad
    \theta^{m-j}(V_h)\cdot s\subseteq U_{\theta^{m-j}(h)\cdot s}.
\end{equation*}
Hence for every $I_\alpha\in\mathscr{I}$ we have that
\begin{equation*}
(i,h,j)\cdot\left(W_s\right)_{m,n}\subseteq
\left(\theta^{m-j}(h)\cdot W_s\right)_{i-j+m,n}\subseteq \left(U_{\theta^{m-j}(h)\cdot
s}\right)_{i-j+m,n}
\end{equation*}
 and
\begin{equation*}
\begin{split}
  \left(V_h\right)_{i,j}^\alpha&{\cdot}(m,s,n){\subseteq}
   \big(\left(V_h\right)_{i,j}{\cdot}(m,s,n)\big){\cup}
   \big(\left(\theta^{-1}(V_h){\cap} I_\alpha\right)_{i-1,j-1}
   {\cdot}(m,s,n)\big)
   {\subseteq} \\
  \subseteq &
   \left(\theta^{m-j}(V_h)\cdot s\right)_{i-j+m,n}\cup
   \left(\theta^{m-j+1}\left(\theta^{-1}(V_h)\cap
   I_\alpha\right)\cdot s\right)_{i-j+m,n}\subseteq \\
  \subseteq &
   \left(\theta^{m-j}(V_h)\cdot s\right)_{i-j+m,n}\cup
   \left(\theta^{m-j}(V_h)\cdot s\right)_{i-j+m,n}\subseteq \\
  \subseteq &
   \left(\theta^{m-j}(V_h)\cdot s\right)_{i-j+m,n}\subseteq
   \left(U_{\theta^{m-j}(h)\cdot s}\right)_{i-j+m,n}.
\end{split}
\end{equation*}

Suppose that $j=m$. Then the separate continuity of the semigroup
operation in $(S,\tau)$ implies that for every open neighbourhood
$U_{h\cdot s}$ of the point $h\cdot s$ in $(S,\tau)$ there exist
open neighbourhoods $V_h$ and $W_s$ of the points $h$ and $s$ in
$(S,\tau)$, respectively, such that
\begin{equation*}
    V_h\cdot s\subseteq U_{h\cdot s} \qquad \hbox{and} \qquad
    h\cdot W_s\subseteq U_{h\cdot s}.
\end{equation*}
Then for every $I_\alpha\in\mathscr{I}$ we have that
$(i,h,j)\cdot\left(W_s\right)_{m,n}\subseteq \left(h\cdot
W_s\right)_{i,n}\subseteq\left(U_{h\cdot s}\right)_{i,n}$ and
\begin{equation*}
\begin{split}
  \left(V_h\right)^\alpha_{i,j}&{\cdot}(m,s,n) {\subseteq}
   \big(\left(V_h\right)_{i,j}{\cdot}(m,s,n)\big){\cup}
   \big(\left(\theta^{-1}(V_h){\cap}
   I_\alpha\right)_{i-1,j-1}{\cdot}(m,s,n)\big)\subseteq\\
  \subseteq &
   \left(V_h\cdot s\right)_{i,n}\cup
   \left(\theta\left(\theta^{-1}(V_h)\cap
   I_\alpha\right)\cdot s\right)_{i,n}
  \subseteq
   \left(V_h\cdot s\right)_{i,n}\cup
   \left(V_h\cdot s\right)_{i,n}=\\
  = &
   \left(V_h\cdot s\right)_{i,n}\subseteq
   \left(U_{h\cdot s}\right)_{i,n}.
\end{split}
\end{equation*}

If $j>m$  then the separate continuity of the semigroup operation in
$(S,\tau)$ and the continuity of the homomorphism $\theta\colon
S\rightarrow H(1_S)$ imply that for every open neighbourhood
$U_{h\cdot \theta^{j-m}(s)}$ of the point $h\cdot \theta^{j-m}(s)$
in $(S,\tau)$ there exist open neighbourhoods $V_h$ and $W_s$ of the
points $h$ and $s$ in $(S,\tau)$, respectively, such that
\begin{equation*}
    h\cdot \theta^{j-m}(W_s)\subseteq U_{h\cdot\theta^{j-m}(s)}
    \qquad \hbox{and} \qquad
    V_h\cdot \theta^{j-m}(s)\subseteq U_{h\cdot\theta^{j-m}(s)}.
\end{equation*}
Hence for every $I_\alpha\in\mathscr{I}$ we have that
\begin{equation*}
(i,h,j)\cdot\left(W_s\right)_{m,n}\subseteq \left(h\cdot
\theta^{j-m}(W_s)\right)_{i,n-m+j}\subseteq
\left(U_{h\cdot\theta^{j-m}(s)}\right)_{i,n-m+j}^\alpha
\end{equation*}
 and
\begin{equation*}
\begin{split}
  &\left(V_h\right)^\alpha_{i,j}{\cdot}(m,s,n){\subseteq}
   \big(\left(V_h\right)_{i,j}{\cdot} (m,s,n)\big){\cup}
   \big(\left(\theta^{-1}(V_h){\cap}
   I_\alpha\right)_{i-1,j-1}{\cdot} (m,s,n)\big){\subseteq}\\
  &
\left\{\!\!\!\!
  \begin{array}{ll}
    \left(V_h{\cdot} \theta^{j{-}m}(s)\right)_{i,n{-}m{+}j}{\cup}
   \left(\left(\theta^{-1}(V_h){\cap}
   I_\alpha\right)\!{\cdot} s\right)_{i{-}1,n}{,}
    & \!\!\!\hbox{if~} j{-}1{=}m;\\
    \left(V_h{\cdot} \theta^{j-m}(s)\right)_{i,n{-}m{+}j}{\cup}
   \left(\left(\theta^{-1}(V_h){\cap}
   I_\alpha\right)\!{\cdot} \theta^{j{-}1{-}m}(s)\right)_{i{-}1,n{-}m{+}j{-}1}{,}
    & \!\!\!\hbox{if~} j{-}1{>}m
  \end{array}
\right.
  \\
   &\subseteq
    \left(V_h\cdot \theta^{j-m}(s)\right)^\alpha_{i,n-m+j}\subseteq
    \left(U_{h\cdot\theta^{j-m}(s)}\right)^\alpha_{i,n-m+j},
\end{split}
\end{equation*}
because $\theta\left(\left(\theta^{-1}(V_h)\cap I_\alpha\right)\cdot
\theta^{j-1-m}(s)\right)\subseteq V_h\cdot \theta^{j-m}(s)\subseteq
U_{h\cdot\theta^{j-m}(s)}$.

In case $3)$ we have that
\begin{equation*}\label{f9}
 (m,s,n)\cdot(i,g,j)=
 \left\{
   \begin{array}{ll}
     (m-n+i,\theta^{i-n}(s)\cdot g,j), & \hbox{if~} n<i; \\
     (m,s\cdot g,j), & \hbox{if~} n=i; \\
     (m,s\cdot\theta^{n-i}(g),j-i+n), & \hbox{if~} n>i.
   \end{array}
 \right.
\end{equation*}
and in this case the proof of separate continuity of the semigroup
operation in $(\mathscr{B}(S,\mathbb{Z},\theta),\tau_{\textbf{BR}})$
is similar to case $2)$.
\end{proof}

We observe that in the case when $\theta(s)=1_S$ for all $s\in S^1$
then a base of the topology $\tau_{\textbf{BR}}$ on
$\mathscr{B}(S,\mathbb{Z})$ is determined in the following way:
\begin{itemize}
 \item[$(1)$] for every $s\in S^1\setminus\{1_S\}$ and
  $i,j\in\mathbb{Z}$ the base of the topology
  $\tau_{\textbf{BR}}$ at the point $(i,s,j)$ coincides with a
  base of the direct sum topology $\tau_{\textbf{BR}}^\textbf{ds}$
  at $(i,s,j)$; \; and

 \item[$(2)$] the family $\mathscr{B}_{(i,1_S,j)}=\big\{
  U^\alpha_{i,j}=U_{i,j}\cup(I_\alpha)_{i-1,j-1}\colon U\in
  \mathscr{B}_{1_S},I_\alpha\in\mathscr{I}\big\}$, where
  $\mathscr{B}_{1_S}$ is a base of the topology $\tau$ at the point
  $1_S$ in $S$, is a base of the topology $\tau_{\textbf{BR}}$ at
  the point $(i,1_S,j)$, for all $i,j\in\mathbb{Z}$.
\end{itemize}

Then Theorem~\ref{theorem-2.10} implies the following:

\begin{theorem}\label{theorem-2.10a}
Let $(S,\tau)$ be a Hausdorff semitopological
\textsf{OIP}-semigroup. Then there exists a topological
$\mathbb{Z}$-Bruck extension $\left(\mathscr{B}(S,\mathbb{Z}),
\tau_{\textbf{BR}}\right)$ of $(S,\tau)$ in the class of
semitopological semigroups such that the topology
$\tau_{\textbf{BR}}$ is strictly coarser than the direct sum topology
$\tau_{\textbf{BR}}^\textbf{ds}$ on $\mathscr{B}(S,\mathbb{Z})$.
\end{theorem}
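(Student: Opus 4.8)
The plan is to derive Theorem~\ref{theorem-2.10a} directly from Theorem~\ref{theorem-2.10} by specializing the homomorphism $\theta$ to be the trivial (annihilating) one, $\theta(s)=1_S$ for all $s\in S^1$. First I would observe that in this case the semigroup $\mathscr{B}(S,\mathbb{Z},\theta)$ is exactly $\mathscr{B}(S,\mathbb{Z})$ by definition, and the trivial homomorphism is automatically continuous (it is constant), so the hypotheses of Theorem~\ref{theorem-2.10} apply verbatim to any Hausdorff semitopological \textsf{OIP}-semigroup $(S,\tau)$. Thus Theorem~\ref{theorem-2.10} already hands us a topology $\tau_{\textbf{BR}}$ on $\mathscr{B}(S,\mathbb{Z})$ making it a semitopological semigroup, with $\tau_{\textbf{BR}}|_{S_{m,m}}=\tau$ for suitable $m$, and with $\tau_{\textbf{BR}}$ strictly coarser than $\tau_{\textbf{BR}}^\textbf{ds}$.

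Next I would record what the constructed base looks like once $\theta$ collapses to $1_S$: the sets $(U_a)^\alpha_{i,j}=(U_a)_{i,j}\cup\bigl(\theta^{-1}(U_a)\cap I_\alpha\bigr)_{i-1,j-1}$ from Theorem~\ref{theorem-2.10} simplify, because $\theta^{-1}(U_a)=S$ whenever $1_S\in U_a$ (which holds for a neighbourhood of $a=1_S$) and equals $\varnothing$ otherwise; also $H(1_S)=\{1_S\}$ is not forced but the only point at which a nonstandard neighbourhood is needed is $1_S$ itself, since for $s\in S^1\setminus\{1_S\}$ we use the direct-sum base. This is precisely the description already displayed in the excerpt immediately before the statement of Theorem~\ref{theorem-2.10a}: neighbourhoods of $(i,1_S,j)$ are of the form $U^\alpha_{i,j}=U_{i,j}\cup(I_\alpha)_{i-1,j-1}$ with $U\in\mathscr{B}_{1_S}$ and $I_\alpha\in\mathscr{I}$. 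So the bulk of the argument is just noting that the general construction degenerates to this one.

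Finally I would verify the three required properties of a topological $\mathbb{Z}$-Bruck extension (Definition~\ref{definition-2.2}) in this specialized setting: continuity of $\theta$ is trivial; $\bigl(\mathscr{B}(S,\mathbb{Z}),\tau_{\textbf{BR}}\bigr)$ is a Hausdorff semitopological semigroup by Theorem~\ref{theorem-2.10}; and $\tau_{\textbf{BR}}|_{S_{m,m}}=\tau$ for some $m$ since the direct-sum base is used at all points $(m,s,m)$ with $s\ne 1_S$ and the trace at $(m,1_S,m)$ of the modified base still induces $\tau$ at $1_S$ (the added piece $(I_\alpha)_{m-1,m-1}$ lies in a different $\mathscr{D}$-layer by Remark~\ref{remark-1.2}, hence does not affect the subspace topology on $S_{m,m}$). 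Strictness of the inclusion $\tau_{\textbf{BR}}\subsetneq\tau_{\textbf{BR}}^\textbf{ds}$ is inherited from Theorem~\ref{theorem-2.10}, or seen directly: the singleton-type neighbourhood $\{(i,1_S,j)\}$ of the direct sum topology is not a $\tau_{\textbf{BR}}$-open set because every basic $U^\alpha_{i,j}$ meets $S_{i-1,j-1}$ in the nonempty set $(I_\alpha)_{i-1,j-1}$. There is essentially no obstacle here; the only mild care needed is to confirm that the \textsf{OIP} hypothesis alone (without any compact or minimal-idempotent assumption, which by Theorems~\ref{theorem-2.4} and \ref{theorem-2.6} would force the direct sum topology) is what makes the coarsening possible, and this is exactly what Theorem~\ref{theorem-2.10} already provides.
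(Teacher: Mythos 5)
Your proposal is correct and is essentially the paper's own argument: the authors likewise deduce Theorem~\ref{theorem-2.10a} from Theorem~\ref{theorem-2.10} by taking $\theta$ to be the constant (hence continuous) homomorphism $\theta(s)=1_S$, under which the base of Theorem~\ref{theorem-2.10} degenerates to $U^\alpha_{i,j}=U_{i,j}\cup(I_\alpha)_{i-1,j-1}$ at the points $(i,1_S,j)$ and to the direct-sum base elsewhere. Your only inessential slip is calling $\{(i,1_S,j)\}$ a direct-sum basic neighbourhood (it need not be open unless $1_S$ is isolated in $S$); strictness is instead witnessed by the direct-sum open set $U_{i,j}$ itself, exactly as your parenthetical remark about the nonempty trace $(I_\alpha)_{i-1,j-1}$ already shows.
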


Later we need the following:

\begin{proposition}\label{proposition-2.11}
Let $(S,\tau)$ be a topological (inverse) \textsf{OIP}-semigroup.
Let $\tau_{\textbf{BR}}$ be a topology on the semigroup
$\mathscr{B}(S,\mathbb{Z},\theta)$ which is determined in the proof
of Theorem~\ref{theorem-2.10}. Then
$\left(\mathscr{B}(S,\mathbb{Z},\theta), \tau_{\textbf{BR}}\right)$
is a topological (inverse) semigroup.
\end{proposition}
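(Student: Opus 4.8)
The plan is to upgrade the separate-continuity argument of Theorem~\ref{theorem-2.10} to joint continuity, and then (in the inverse case) to verify that inversion is continuous. The topology $\tau_{\textbf{BR}}$ has two kinds of points: points $(i,s,j)$ with $s\in S\setminus H(1_S)$, whose local base is inherited from $\tau_{\textbf{BR}}^\textbf{ds}$, and points $(i,a,j)$ with $a\in H(1_S)$, whose local base consists of the sets $(U_a)^\alpha_{i,j}=(U_a)_{i,j}\cup(\theta^{-1}(U_a)\cap I_\alpha)_{i-1,j-1}$. Since by Proposition~\ref{proposition-2.1} the product $(\mathscr{B}(S,\mathbb{Z},\theta),\tau_{\textbf{BR}}^\textbf{ds})$ is a topological semigroup, and since the only place where $\tau_{\textbf{BR}}$ differs from $\tau_{\textbf{BR}}^\textbf{ds}$ is in the neighbourhoods of the ``group-of-units layer'', it suffices to check joint continuity of the multiplication $(x,y)\mapsto xy$ at a pair $(x_0,y_0)$ in the following cases: when $y_0=(m,g,n)$ with $g\in H(1_S)$ (the target may then lie in the group layer and one must use a basic neighbourhood of type $(2)$ of the product), symmetrically when $x_0=(i,h,j)$ with $h\in H(1_S)$, and the mixed case where both second coordinates are units. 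In all remaining cases the target point has second coordinate outside $H(1_S)$ (using the observation, already proved in Theorem~\ref{theorem-2.10}, that $g\cdot s,\ s\cdot g\notin H(1_S)$ whenever $g\in H(1_S)$, $s\notin H(1_S)$), so a $\tau_{\textbf{BR}}^\textbf{ds}$-basic neighbourhood works and joint continuity follows from Proposition~\ref{proposition-2.1}.

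The key computation is therefore: given a basic neighbourhood $(U_d)^\alpha_{k,l}$ of the product $x_0y_0=(k,d,l)$ with $d\in H(1_S)$, produce basic neighbourhoods $V$ of $x_0$ and $W$ of $y_0$ with $V\cdot W\subseteq(U_d)^\alpha_{k,l}$. One chooses, by joint continuity of multiplication in $(S,\tau)$ and continuity of $\theta$, neighbourhoods in $S$ whose products (suitably twisted by powers of $\theta$, exactly as in the displayed estimates of the proof of Theorem~\ref{theorem-2.10}) land inside $U_d$; then one forms the corresponding sets $V=(\cdots)^\alpha_{i,j}$, $W=(\cdots)^\alpha_{m,n}$, using the \emph{same} open ideal $I_\alpha$. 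The finite intersection property of $\mathscr{I}$ guaranteed by Definition~\ref{definition-2.9} lets one shrink to a common $I_\alpha$ when the two factors a priori require different ideals. The verification $V\cdot W\subseteq(U_d)^\alpha_{k,l}$ is then a case analysis on the relative order of the middle $\mathbb Z$-coordinates, essentially a two-sided version of the one-sided estimates already carried out in Theorem~\ref{theorem-2.10}; the crucial algebraic point in each branch is that multiplying an element of the $I_\alpha$-layer $S_{i-1,j-1}$ produces something whose image under $\theta$ again lies in $U_d$, so that it falls into $(\theta^{-1}(U_d)\cap I_\alpha)_{k-1,l-1}$ — here one uses that $I_\alpha$ is an ideal of $S$, so products with elements of $I_\alpha$ stay in $I_\alpha$.

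For the inverse case one must additionally show that $x\mapsto x^{-1}$ is continuous, where $(i,s,j)^{-1}=(j,s^{-1},i)$ (using that $S^1$ is an inverse semigroup, by Proposition~\ref{proposition-1.1}$(ii)$, together with the formula for inverses in a Bruck–Reilly extension). At a point $(i,a,j)$ with $a\in H(1_S)$ one has $a^{-1}\in H(1_S)$, a basic neighbourhood of $(i,a,j)^{-1}=(j,a^{-1},i)$ is $(U_{a^{-1}})^\alpha_{j,i}$, and continuity of inversion in $(S,\tau)$ gives a neighbourhood $V_a$ of $a$ with $V_a^{-1}\subseteq U_{a^{-1}}$; since $\theta$ is a homomorphism into a group, $\theta^{-1}(U_a)$ is carried by inversion into $\theta^{-1}(U_{a^{-1}})$, and one checks that the inverse of $(U_a)^\alpha_{i,j}$ (with a possibly smaller neighbourhood and the same $I_\alpha$, noting $I_\alpha$ is inversion-invariant when $S$ is inverse) lies inside $(U_{a^{-1}})^\alpha_{j,i}$; at points with second coordinate outside $H(1_S)$ the $\tau_{\textbf{BR}}^\textbf{ds}$-neighbourhoods suffice and continuity follows from Proposition~\ref{proposition-2.1}. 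The main obstacle is the bookkeeping in the joint-continuity case analysis: one must track which of the two factors contributes the ``shifted'' $I_\alpha$-layer and ensure the shifts in the $\mathbb Z$-coordinates are compatible with the single offset $-1$ built into the basic neighbourhoods of the product; this is routine but delicate, and is precisely the point where the ideal property and the finite intersection property of $\mathscr{I}$ are used.
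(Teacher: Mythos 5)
Your plan follows the paper's proof essentially verbatim: reduce joint continuity to the cases where at least one factor has its middle coordinate in $H(1_S)$, choose neighbourhoods in $(S,\tau)$ via joint continuity of multiplication and continuity of $\theta$, use the same $I_\alpha$ for all factors so that the ideal property forces the shifted layer of the product into $\left(\theta^{-1}(U_d)\cap I_\alpha\right)_{k-1,l-1}$, and in the inverse case use $I_\alpha^{-1}=I_\alpha$ together with $\theta(x^{-1})=\theta(x)^{-1}$ to get $\big((V_a)^\alpha_{i,j}\big)^{-1}\subseteq (U_{a^{-1}})^\alpha_{j,i}$. This is exactly the paper's argument (the finite intersection property of $\mathscr{I}$ is in fact not needed: one simply takes the factors' basic neighbourhoods with the same index $\alpha$ as the prescribed neighbourhood of the product).

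One justification in your reduction is incorrect, though the reduction itself survives. You dismiss the case where both factors have middle coordinate in $S\setminus H(1_S)$ on the grounds that the product then also has middle coordinate outside $H(1_S)$; the observation you cite only covers products of a unit with a non-unit, and in a general monoid a product of two non-units can perfectly well be a unit (e.g.\ $pq=1$ in the bicyclic monoid). The correct reason this case is harmless is different: at both factors the $\tau_{\textbf{BR}}$-neighbourhood filter coincides with the $\tau_{\textbf{BR}}^\textbf{ds}$-one, while every basic $\tau_{\textbf{BR}}$-neighbourhood of the product, of either type, contains a basic $\tau_{\textbf{BR}}^\textbf{ds}$-neighbourhood (the topology is coarser); hence Proposition~\ref{proposition-2.1} applies directly whatever the nature of the product. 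With that repair your argument is sound.
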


\begin{proof}
If $(S,\tau)$ is a topological semigroup then
Proposition~\ref{proposition-2.1} implies that the semigroup
operation is continuous in $\left(\mathscr{B}(S,\mathbb{Z},\theta),
\tau_{\textbf{BR}}^\textbf{ds}\right)$. Similarly, if inversion in
an inverse topological semigroup $(S,\tau)$ is continuous then
Proposition~\ref{proposition-2.1} implies that the inversion in
$\left(\mathscr{B}(S,\mathbb{Z},\theta),
\tau_{\textbf{BR}}^\textbf{ds}\right)$ is continuous too. Therefore
it is sufficient to show that the semigroup operation is jointly
continuous in $\left(\mathscr{B}(S,\mathbb{Z},\theta),
\tau_{\textbf{BR}}\right)$ in the following three cases:
\begin{equation*}
1)~(i,h,j)\cdot(m,g,n); \;
2)~(i,h,j)\cdot(m,s,n); \; \hbox{~and~} \;
3)~(m,s,n)\cdot(i,g,j),
\end{equation*}
and in the case when $(S,\tau)$ is an topological inverse semigroup
it is sufficient to show that inversion is continuous at the point
$(i,h,j)$, for all $h,g\in H(1_S)$, $s\in S\setminus H(1_S)$ and
$i,j,m,n\in\mathbb{Z}$.

Consider case $1)$. Then we have that
\begin{equation*}\label{f10}
 (i,h,j)\cdot(m,g,n)=
 \left\{
   \begin{array}{ll}
     (i-j+m,\theta^{m-j}(h)\cdot g,n), & \hbox{if~} j<m; \\
     (i,h\cdot g,n), & \hbox{if~} j=m; \\
     (i,h\cdot\theta^{j-m}(g),n-m+j), & \hbox{if~} j>m.
   \end{array}
 \right.
\end{equation*}

If $j<m$ then the continuity of the semigroup operation in
$(S,\tau)$ and the continuity of the homomorphism $\theta\colon
S\rightarrow H(1_S)$ imply that for every open neighbourhood
$U_{\theta^{m-j}(h)\cdot g}$ of the point $\theta^{m-j}(h)\cdot g$
in $(S,\tau)$ there exist open neighbourhoods $V_h$ and $W_g$ of the
points $h$ and $g$ in $(S,\tau)$, respectively, such that
$\theta^{m-j}(V_h)\cdot W_g\subseteq U_{\theta^{m-j}(h)\cdot g}$.
Hence for every $I_\alpha\in \mathscr{I}$ we have that
\begin{equation*}
\begin{split}
  &\left(V_h\right)^\alpha_{i,j}{\cdot}
  \left(W_g\right)^\alpha_{m,n}  {\subseteq}
  \big(\left(V_h\right)_{i,j}{\cdot}\left(W_g\right)_{m,n}\big){\cup}
  \big(\left(V_h\right)_{i,j}{\cdot}
  \left(\theta^{-1}(W_g){\cap} I_\alpha\right)_{m-1,n-1}\big){\cup}\\
     &
  \big(\!\left(\theta^{-1}(V_h){\cap} I_\alpha\right)_{i{-}1,j{-}1}
  \!\!{\cdot}\!\left(W_g\right)_{m,n}\!\big){\cup}
  \big(\!\left(\theta^{-1}(V_h){\cap} I_\alpha\right)_{i{-}1,j{-}1}\!\! {\cdot}\!
  \left(\theta^{-1}(W_g){\cap} I_\alpha\right)_{m{-}1,n{-}1}\!\big)\\
    & \subseteq
  \left(\theta^{m-j}(V_h)\cdot W_g\right)_{i-j+m,n}\cup A\cup
  \left(\theta^{m-j+1}\left(\theta^{-1}(V_h)\cap I_\alpha\right)
  \cdot W_g\right)_{i-j+m,n}\cup\\
     &{\cup}
  \left(\theta^{m-j}\left(\theta^{-1}(V_h){\cap} I_\alpha\right)
  {\cdot} \left(\theta^{-1}(W_g){\cap}
  I_\alpha\right)\right)_{i-j+m-1,n-1}\subseteq \left(U_{\theta^{m-j}(h)\cdot
     g}\right)_{i-j+m,n}^\alpha,
\end{split}
\end{equation*}
where
\begin{equation*}
A=
\left\{
  \begin{array}{ll}
    \left(V_h\cdot\left(\theta^{-1}(W_g)\cap
    I_\alpha\right)\right)_{i,n-1}, & \hbox{if~} j=m-1;\\
    \left(\theta^{m-1-j}(V_h)\cdot\left(\theta^{-1}(W_g)\cap
    I_\alpha\right)\right)_{i-j+m-1,n-1}, & \hbox{if~} j<m-1,
  \end{array}
\right.
\end{equation*}
because
\begin{equation*}
\begin{split}
& \theta^{m-j+1}\left(\theta^{-1}(V_h)\cap I_\alpha\right)
  \cdot W_g\subseteq \theta^{m-j}(V_h)\cdot W_g\subseteq
  U_{\theta^{m-j}(h)\cdot g},\\
& \theta\left(\theta^{m-j}\left(\theta^{-1}(V_h)\cap I_\alpha\right)
  \cdot \left(\theta^{-1}(W_g)\cap
  I_\alpha\right)\right)\subseteq \theta^{m-j}(V_h)\cdot W_g\subseteq
  U_{\theta^{m-j}(h)\cdot g}, \\
\end{split}
\end{equation*}
and
\begin{equation*}
\theta(A)= \left\{
  \begin{array}{ll}
    \theta(V_h)\cdot W_g, & \hbox{if~} j=m-1;\\
    \theta^{m-j}(V_h)\cdot W_g, & \hbox{if~} j<m-1,
  \end{array}
\right. \subseteq U_{\theta^{m-j}(h)\cdot g}.
\end{equation*}

The proof of the continuity of the semigroup operation in
$\left(\mathscr{B}(S,\mathbb{Z},\theta), \tau_{\textbf{BR}}\right)$
in the case when $j>m$ is similar to the previous case.

If $j=m$ then the continuity of the semigroup operation in
$(S,\tau)$ implies that for every open neighbourhood $U_{h\cdot g}$
of the point $h\cdot g$ in $(S,\tau)$ there exist open
neighbourhoods $V_h$ and $W_g$ of the points $h$ and $g$ in
$(S,\tau)$, respectively, such that $V_h\cdot W_g\subseteq U_{h\cdot
g}$. Then for every $I_\alpha\in \mathscr{I}$ we get that
\begin{equation*}
\left(V_h\right)^\alpha_{i,j}\cdot\left(W_g\right)^\alpha_{m,n}
\subseteq \left(V_h\cdot W_g\right)^\alpha_{i,n} \subseteq
\left(U_{h\cdot g}\right)^\alpha_{i,n}.
\end{equation*}

In case $2)$  we have that
\begin{equation*}\label{f11}
 (i,h,j)\cdot(m,s,n)=
 \left\{
   \begin{array}{ll}
     (i-j+m,\theta^{m-j}(h)\cdot s,n), & \hbox{if~} j<m; \\
     (i,h\cdot s,n), & \hbox{if~} j=m; \\
     (i,h\cdot\theta^{j-m}(s),n-m+j), & \hbox{if~} j>m,
   \end{array}
 \right.
\end{equation*}
where $\theta^{m-j}(h)\cdot s, h\cdot s\in S\setminus H(1_S)$ and
$h\cdot \theta^{j-m}(s)\in H(1_S)$.

If $j<m$ then the continuity of the semigroup operation in
$(S,\tau)$ and the continuity of the homomorphism $\theta\colon
S\rightarrow H(1_S)$ imply that for every open neighbourhood
$U_{\theta^{m-j}(h)\cdot s}$ of the point $\theta^{m-j}(h)\cdot s$
in $(S,\tau)$ there exist open neighbourhoods $V_h$ and $W_s$ of the
points $h$ and $s$ in $(S,\tau)$, respectively, such that
$\theta^{m-j}(V_h)\cdot W_s\subseteq U_{\theta^{m-j}(h)\cdot s}$.
Hence for every $I_\alpha\in \mathscr{I}$ we have that
\begin{equation*}
\begin{split}
  \left(V_h\right)^\alpha_{i,j}& {\cdot}
  \left(W_s\right)_{m,n}
 \subseteq
  \big(\left(V_h\right)_{i,j}{\cdot}\left(W_s\right)_{m,n}\big)\cup
  \big(\left(\theta^{-1}(V_h)\cap I_\alpha\right)_{i-1,j-1}
  {\cdot}\left(W_s\right)_{m,n}\big)\subseteq\\
 \subseteq &
  \left(\theta^{m-j}(V_h)\cdot W_s\right)_{i-j+m,n}\cup
  \left(\theta^{m-j+1}\left(\theta^{-1}(V_h)\cap I_\alpha\right)
  \cdot W_s\right)_{i-j+m,n}\subseteq\\
 \subseteq &
  \left(\theta^{m-j}(V_h)\cdot W_s\right)_{i-j+m,n}\subseteq
  \left(U_{\theta^{m-j}(h)\cdot s}\right)_{i-j+m,n}.
\end{split}
\end{equation*}

If $j=m$ then the continuity of the semigroup operation in
$(S,\tau)$ implies that for every open neighbourhood $U_{h\cdot s}$
of the point $h\cdot s$ in $(S,\tau)$ there exist open
neighbourhoods $V_h$ and $W_s$ of the points $h$ and $s$ in
$(S,\tau)$, respectively, such that $V_h\cdot W_s\subseteq U_{h\cdot
s}$. Then for every $I_\alpha\in \mathscr{I}$ we get that
\begin{equation*}
\begin{split}
\left(V_h\right)^\alpha_{i,j}&{\cdot}\left(W_s\right)_{m,n}{\subseteq}
  \big(\left(V_h\right)_{i,j}{\cdot}\left(W_s\right)_{m,n}\big){\cup}
  \big(\left(\theta^{-1}\left(V_h\right){\cap}
  I_\alpha\right)_{i-1,j-1}{\cdot}\left(W_s\right)_{m,n}\big)\subseteq\\
\subseteq&
  \left(V_h\cdot W_s\right)_{i,n}\cup
  \left(\theta\left(\theta^{-1}\left(V_h\right)\cap
  I_\alpha\right)\cdot W_s\right)_{i,n}\subseteq
  \left(V_h\cdot W_s\right)_{i,n}\subseteq\left(U_{h\cdot
  s}\right)_{i,n}.
\end{split}
\end{equation*}

If $j>m$ then the continuity of the semigroup operation in
$(S,\tau)$ and the continuity of the homomorphism $\theta\colon
S\rightarrow H(1_S)$ imply that for every open neighbourhood
$U_{h\cdot\theta^{j-m}(s)}$ of the point $h\cdot\theta^{j-m}(s)$ in
$(S,\tau)$ there exist open neighbourhoods $V_h$ and $W_s$ of the
points $h$ and $s$ in $(S,\tau)$, respectively, such that $V_h\cdot
\theta^{j-m}(W_s)\subseteq U_{h\cdot\theta^{j-m}(s)}$. Hence for
every $I_\alpha\in \mathscr{I}$ we have that
\begin{equation*}
\begin{split}
  \!&\left(V_h\right)^\alpha_{i,j}{\cdot}\left(W_s\right)_{m,n} {\subseteq}
  \big(\left(V_h\right)_{i,j}{\cdot}\left(W_s\right)_{m,n}\big){\cup}
  \big(\left(\theta^{-1}\left(V_h\right){\cap}
  I_\alpha\right)_{i-1,j-1}{\cdot}\left(W_s\right)_{m,n}\big)\subseteq\\
 & \!
 \left\{\!\!\!\!
  \begin{array}{ll}
    \left(V_h{\cdot} \theta^{j-m}(W_s)\!\right)_{i,n-m+j}\!{\cup}\!
  \left(\!\left(\theta^{-1}\!\left(V_h\right)\!{\cap}
  I_\alpha\right)\!{\cdot} W_s\right)_{i-1,n},
   & \!\!\!\!\hbox{if~} j{-}1{=}m;\\
    \left(V_h{\cdot} \theta^{j-m}(W_s)\!\right)_{i,n-m+j}\!{\cup}\!
  \left(\!\left(\theta^{-1}\!\left(V_h\right)\!{\cap}
  I_\alpha\right)\!{\cdot}\theta^{j-1-m}(W_s)\!\right)_{i-1,n-m+j-1}\!\!{,}
   & \!\!\!\!\hbox{if~} j{-}1{>}m
  \end{array}
\right. \\
 &\subseteq
  \left(V_h\cdot \theta^{j-m}(W_s)\right)_{i,n-m+j}\cup
  \left(\theta^{-1}\left(U_{h\cdot\theta^{j-m}(s)}\right)\cap
  I_\alpha\right)_{i-1,n-m+j-1}\subseteq\\
 &\subseteq
  \left(U_{h\cdot\theta^{j-m}(s)}\right)^\alpha_{i,n},
\end{split}
\end{equation*}
because
\begin{equation*}
\theta\left(\left(\theta^{-1}\left(V_h\right)\cap
I_\alpha\right)\cdot\theta^{j-1-m}(W_s)\right)\subseteq V_h\cdot
\theta^{j-m}(W_s)\subseteq U_{h\cdot\theta^{j-m}(s)}.
\end{equation*}

The proof of the continuity of the semigroup operation in
$\left(\mathscr{B}(S,\mathbb{Z},\theta), \tau_{\textbf{BR}}\right)$
in case $3)$ is similar to case $2)$.

If $(S,\tau)$ is a topological inverse semigroup then for every
ideal $I$ in $S$ we have that $I^{-1}=I$ and for every open
neighbourhoods $V_s$ and $U_{s^{-1}}$ of the points $s$ and $s^{-1}$
in $(S,\tau)$, respectively, such that
$\left(V_s\right)^{-1}\subseteq U_{s^{-1}}$ we have that
\begin{itemize}
  \item[] $\big(\left(V_s\right)_{i,j}\big)^{-1}\subseteq
   \big(U_{s^{-1}}\big)_{j,i}$, for $s\in S\setminus H(1_S)$ \;
    and
  \item[] $\big(\left(V_s\right)^\alpha_{i,j}\big)^{-1}\subseteq
   \left(U_{s^{-1}}\right)^\alpha_{j,i}$, for $s\in H(1_S)$,
\end{itemize}
for all $I_\alpha\in\mathscr{I}$, and hence
$\left(\mathscr{B}(S,\mathbb{Z},\theta), \tau_{\textbf{BR}}\right)$
is a topological inverse semigroup. This completes the proof of the
proposition.
\end{proof}

Theorem~\ref{theorem-2.10} and Proposition~\ref{proposition-2.11}
imply the following:

\begin{theorem}\label{theorem-2.12}
Let $(S,\tau)$ be a topological (inverse) \textsf{OIP}-semigroup.
Then there exists a topological $\mathbb{Z}$-Bruck-Reilly extension
$\left(\mathscr{B}(S,\mathbb{Z},\theta), \tau_{\textbf{BR}}\right)$
of $(S,\tau)$ in the class of topological (inverse) semigroups such
that the topology $\tau_{\textbf{BR}}$ is strictly coarser than the
direct sum topology $\tau_{\textbf{BR}}^\textbf{ds}$ on
$\mathscr{B}(S,\mathbb{Z},\theta)$.
\end{theorem}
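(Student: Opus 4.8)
The plan is to read off the statement directly from Theorem~\ref{theorem-2.10} together with Proposition~\ref{proposition-2.11}, the only genuine task being to check that the space produced this way satisfies every clause of Definition~\ref{definition-2.2}. So first I would fix the topology $\tau_{\textbf{BR}}$ on $\mathscr{B}(S,\mathbb{Z},\theta)$ constructed in the proof of Theorem~\ref{theorem-2.10}. Since a topological (inverse) \textsf{OIP}-semigroup is in particular a Hausdorff semitopological \textsf{OIP}-semigroup, and since $\theta\colon S^1\to H(1_S)$ is assumed continuous (adjoining an isolated unit if $S$ has none, as agreed before Definition~\ref{definition-2.2}), that theorem already gives that $\tau_{\textbf{BR}}$ is a Hausdorff topology which is a proper subfamily of $\tau_{\textbf{BR}}^{\textbf{ds}}$, hence strictly coarser, and with respect to which $\theta$ remains continuous.

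Next I would apply Proposition~\ref{proposition-2.11} to this \emph{same} topology $\tau_{\textbf{BR}}$. Since $(S,\tau)$ is now assumed to be a topological (inverse) semigroup rather than merely semitopological, Proposition~\ref{proposition-2.11} upgrades the separate continuity of the multiplication obtained in Theorem~\ref{theorem-2.10} to joint continuity, and in the inverse case additionally yields continuity of the inversion on $\left(\mathscr{B}(S,\mathbb{Z},\theta),\tau_{\textbf{BR}}\right)$. Thus $\left(\mathscr{B}(S,\mathbb{Z},\theta),\tau_{\textbf{BR}}\right)$ belongs to the class of topological (inverse) semigroups.

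It then remains to verify the last requirement of Definition~\ref{definition-2.2}, namely that $\tau_{\textbf{BR}}|_{S_{m,m}}=\tau$ for some (in fact every) $m\in\mathbb{Z}$, under the identification $(m,s,m)\leftrightarrow s$. For $s\in S\setminus H(1_S)$ clause~(1) of the construction makes the basic $\tau_{\textbf{BR}}$-neighbourhoods of $(m,s,m)$ the direct-sum ones, whose traces on $S_{m,m}$ are precisely the sets $U_{m,m}$ with $U$ a $\tau$-neighbourhood of $s$; for $a\in H(1_S)$ the basic neighbourhood $(U_a)^\alpha_{m,m}=(U_a)_{m,m}\cup\left(\theta^{-1}(U_a)\cap I_\alpha\right)_{m-1,m-1}$ meets $S_{m,m}$ exactly in $(U_a)_{m,m}$, because $S_{m-1,m-1}\cap S_{m,m}=\varnothing$. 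Hence $\tau_{\textbf{BR}}$ induces $\tau$ on $S_{m,m}$, so $\left(\mathscr{B}(S,\mathbb{Z},\theta),\tau_{\textbf{BR}}\right)$ is a topological $\mathbb{Z}$-Bruck-Reilly extension of $(S,\tau)$ in the class of topological (inverse) semigroups, and by construction its topology is strictly coarser than $\tau_{\textbf{BR}}^{\textbf{ds}}$.

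I do not expect a real obstacle here: the substantive work—joint continuity of multiplication and continuity of inversion—has already been done in Proposition~\ref{proposition-2.11}, and the present statement is essentially a corollary. The only point demanding a moment's care is bookkeeping: confirming that the topology supplied by Theorem~\ref{theorem-2.10} is literally the one to which Proposition~\ref{proposition-2.11} applies, and that passing to $S^1$ when $S$ has no unit does not disturb the \textsf{OIP} hypothesis or change the ambient class.
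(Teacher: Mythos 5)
Your proposal is correct and follows exactly the paper's route: the paper derives Theorem~\ref{theorem-2.12} directly from Theorem~\ref{theorem-2.10} (which supplies the strictly coarser Hausdorff topology) combined with Proposition~\ref{proposition-2.11} (which upgrades to joint continuity and continuity of inversion for that same topology). Your additional check that $\tau_{\textbf{BR}}|_{S_{m,m}}=\tau$, which the paper leaves implicit, is a welcome piece of bookkeeping and is carried out correctly.
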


Theorem~\ref{theorem-2.12} implies the following:

\begin{corollary}\label{corollary-2.12a}
Let $(S,\tau)$ be a topological (inverse) \textsf{OIP}-semigroup.
Then there exists a topological $\mathbb{Z}$-Bruck extension
$\left(\mathscr{B}(S,\mathbb{Z}), \tau_{\textbf{BR}}\right)$ of
$(S,\tau)$ in the class of topological (inverse) semigroups such
that the topology $\tau_{\textbf{BR}}$ is strictly coarser than the
direct sum topology $\tau_{\textbf{BR}}^\textbf{ds}$ on
$\mathscr{B}(S,\mathbb{Z})$.
\end{corollary}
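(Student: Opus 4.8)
The plan is to read off Corollary~\ref{corollary-2.12a} as the special case of Theorem~\ref{theorem-2.12} in which the homomorphism $\theta$ is trivial. Concretely, I would take $\theta\colon S^1\to H(1_S)$ to be the constant map $\theta(s)=1_S$ for every $s\in S^1$; this is a homomorphism into the group of units of $S^1$ and, being constant, is continuous, so $(S,\tau)$ together with this $\theta$ satisfies the hypotheses of Theorem~\ref{theorem-2.12}. For this choice of $\theta$ the semigroup $\mathscr{B}(S,\mathbb{Z},\theta)$ is, by Definition~\ref{definition-2.2}, exactly $\mathscr{B}(S,\mathbb{Z})$. Hence Theorem~\ref{theorem-2.12} already supplies a topology $\tau_{\textbf{BR}}$ on $\mathscr{B}(S,\mathbb{Z})$ such that $\left(\mathscr{B}(S,\mathbb{Z}),\tau_{\textbf{BR}}\right)$ is a topological (inverse) semigroup which is a topological $\mathbb{Z}$-Bruck extension of $(S,\tau)$ (it restricts to $\tau$ on some slice $S_{m,m}$), and such that $\tau_{\textbf{BR}}$ is strictly coarser than $\tau_{\textbf{BR}}^{\textbf{ds}}$; this is precisely the statement to be proved.

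The only point I would make explicit is how the base of $\tau_{\textbf{BR}}$ constructed in the proof of Theorem~\ref{theorem-2.10} (and re-used in Theorem~\ref{theorem-2.12} via Proposition~\ref{proposition-2.11}) degenerates when $\theta\equiv 1_S$. For $a\in H(1_S)$ and $U_a\in\mathscr{B}_a$ one has $\theta^{-1}(U_a)=S^1$ if $1_S\in U_a$ and $\theta^{-1}(U_a)=\varnothing$ otherwise; consequently the basic set $(U_a)^\alpha_{i,j}=(U_a)_{i,j}\cup\left(\theta^{-1}(U_a)\cap I_\alpha\right)_{i-1,j-1}$ collapses to the direct sum basic set at $(i,a,j)$ when $a\neq 1_S$ (taking $U_a$ with $1_S\notin U_a$, which is possible since $S$ is Hausdorff) and to $U_{i,j}\cup(I_\alpha)_{i-1,j-1}$ when $a=1_S$. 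This matches verbatim the description of the base of $\tau_{\textbf{BR}}$ on $\mathscr{B}(S,\mathbb{Z})$ given in the paragraph after Theorem~\ref{theorem-2.10}. Strictness of the refinement is then the same observation as in the proof of Theorem~\ref{theorem-2.10}: since each $I_\alpha$ is a non-empty open ideal, the point $(i,1_S,j)$ has no $\tau_{\textbf{BR}}$-neighbourhood inside the slice $S_{i,j}$, whereas $S_{i,j}$ is $\tau_{\textbf{BR}}^{\textbf{ds}}$-open, so $\tau_{\textbf{BR}}\subsetneq\tau_{\textbf{BR}}^{\textbf{ds}}$.

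I do not anticipate any genuine obstacle: the corollary is a pure specialisation, with all the substance already contained in Theorems~\ref{theorem-2.10}, \ref{theorem-2.12} and Proposition~\ref{proposition-2.11}. The mildest point of care is to confirm that the class of topological (inverse) semigroups is preserved under the passage to $\theta\equiv 1_S$ --- that is, that Proposition~\ref{proposition-2.11} really does apply to $\mathscr{B}(S,\mathbb{Z})$ --- but this is immediate because $\mathscr{B}(S,\mathbb{Z})=\mathscr{B}(S,\mathbb{Z},\theta)$ for the constant homomorphism, so the verification of joint continuity of multiplication and of inversion carried out in the proof of Proposition~\ref{proposition-2.11} transfers without change.
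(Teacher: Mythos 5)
Your proposal is correct and coincides with the paper's own (implicit) argument: the paper derives Corollary~\ref{corollary-2.12a} precisely by specialising Theorem~\ref{theorem-2.12} to the constant homomorphism $\theta(s)=1_S$, under which $\mathscr{B}(S,\mathbb{Z},\theta)=\mathscr{B}(S,\mathbb{Z})$ and the base of $\tau_{\textbf{BR}}$ reduces to the one described after Theorem~\ref{theorem-2.10}. Your extra remarks on how the basic neighbourhoods degenerate and on strictness are accurate but not needed beyond what the paper already records.
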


Recall \cite{CHK}, a topological semilattice $E$ is said to be a
\emph{$U$-semilattice} if for every $x\in E$ and every open
neighbourhood $U={\uparrow}U$ of $x$ in $E$, there exists $y\in U$
such that $x\in\operatorname{Int}_E({\uparrow}y)$.

\begin{remark}\label{remark-2.13}
Let $S$ be a Clifford inverse semigroup. We define a map
$\varphi\colon S\rightarrow E(S)$ by the formula $\varphi(x)=x\cdot
x^{-1}$. Theorem~4.11 from \cite{CP} implies that if $I$ is an ideal
of $E(S)$ then $\varphi^{-1}(I)$ is an ideal of $S$.
\end{remark}

The following theorem gives examples of topological
\textsf{OIP}-semigroups.

\begin{theorem}\label{theorem-2.14}
Let $(S,\tau)$ be a topological inverse Clifford semigroup. If the
band $E(S)$ of $S$ has no a smallest idempotent and satisfies one of
the following conditions:
\begin{itemize}
  \item[$(1)$] for every $x\in E(S)$ there exists $y\in{\downarrow}x$
   such that there is an open neighbourhood $U_y$ of $y$ with the compact
   closure $\operatorname{cl}_{E(S)}(U_y)$;
  \item[$(2)$] $E(S)$ is locally compact;
  \item[$(3)$] $E(S)$ is a $U$-semilattice,
\end{itemize}
then $(S,\tau)$ is an \textsf{OIP}-semigroup.
\end{theorem}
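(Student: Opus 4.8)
The plan is to produce, under each of the three hypotheses, an explicit family $\mathscr{I}$ of open ideals of $S$ witnessing the \textsf{OIP}-property. The unifying idea is Remark~\ref{remark-2.13}: since $(S,\tau)$ is a topological inverse Clifford semigroup, the map $\varphi\colon S\to E(S)$, $\varphi(x)=xx^{-1}$, is continuous (it is the composite of the continuous maps $x\mapsto x^{-1}$, the diagonal, and multiplication), and by Theorem~4.11 of \cite{CP} the preimage $\varphi^{-1}(I)$ of an ideal $I$ of $E(S)$ is an ideal of $S$. Moreover, if $I$ is open in $E(S)$ then $\varphi^{-1}(I)$ is open in $S$. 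So it suffices to manufacture, for each $x\in S$, an open ideal $J$ of $E(S)$ and an open neighbourhood $W$ of $\varphi(x)$ in $E(S)$ with $W\cap J=\varnothing$; pulling back along $\varphi$ (and using that $\varphi(x)\in\varphi^{-1}(W)$) gives the required open ideal $\varphi^{-1}(J)$ and open neighbourhood $\varphi^{-1}(W)$ of $x$ in $S$. Thus the whole theorem reduces to the statement: \emph{a topological semilattice $E=E(S)$ with no smallest element satisfying $(1)$, $(2)$, or $(3)$ has a family of open ideals witnessing the \textsf{OIP}-property for $E$ itself.}

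For the semilattice reduction I would first recall that in a topological semilattice the principal filter ${\uparrow}e$ is closed (it is $\{f : ef = e\}$, a fibre of a continuous map) and that the ideal ${\downarrow}e=\{f: ef=f\}$ of $E$ generated by $e$ need not be open, so one cannot simply take the ${\downarrow}e$. Instead, under hypothesis $(1)$, fix $x\in E$; pick $y\in{\downarrow}x$ with an open neighbourhood $U_y$ whose closure $K=\operatorname{cl}_E(U_y)$ is compact. Since $E$ has no smallest idempotent, there is $z$ with $z\not\geqslant f$ for some $f\in K$; running over $K$ and using compactness together with continuity of multiplication, one produces a single $z_0\in E$ with $z_0 f \ne z_0$, equivalently $z_0\notin{\uparrow}f$, for all $f\in K$ — more carefully, one finds $z_0$ below everything relevant so that the \emph{open} ideal $J=E\setminus{\uparrow}z_0$ misses $K\supseteq U_y\ni y$, while $x$ still has the open neighbourhood $E\setminus{\uparrow}z_0'$... — here the argument must be arranged so that $J$ avoids a neighbourhood of $x$, not of $y$; the standard fix is to translate: multiplication by a fixed element carries a neighbourhood of $y$ to a neighbourhood of $x$ when $y\in{\downarrow}x$, or one argues directly that $E\setminus{\uparrow}z_0$ is an open ideal (the complement of a closed filter is always an open ideal) containing no point of a suitably chosen neighbourhood of $x$. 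Hypothesis $(2)$ is the special case of $(1)$ in which every point — in particular every $y\in{\downarrow}x$ — has a compact neighbourhood, so it needs no separate treatment beyond citing the reduction. For hypothesis $(3)$, the $U$-semilattice condition directly hands us, for $x\in E$ and the open filter neighbourhood $U={\uparrow}U$ of $x$, a point $y\in U$ with $x\in\operatorname{Int}_E({\uparrow}y)$; since $E$ has no smallest element we may choose $y$ with ${\uparrow}y\ne E$, and then $E\setminus{\uparrow}y$ is an open ideal of $E$ disjoint from the open neighbourhood $\operatorname{Int}_E({\uparrow}y)$ of $x$.

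In all three cases the collection $\mathscr{I}=\{\,E\setminus{\uparrow}e : e\in E,\ {\uparrow}e\ne E\,\}$ (or its $\varphi$-pullback) is the desired family of open ideals, and the verification that each is an ideal is immediate: if $a\in E\setminus{\uparrow}e$ and $b\in E$ then $ab\leqslant a$, so $ab\in{\uparrow}e$ would force $e\leqslant ab\leqslant a$, contradicting $a\notin{\uparrow}e$; hence $ab\in E\setminus{\uparrow}e$. The finite-intersection property noted after Definition~\ref{definition-2.9} holds automatically since $E$ has no smallest idempotent. I expect the main obstacle to be hypothesis $(1)$: the compactness argument must be deployed to pass from "for each $f$ in the closure $K$ there is a filter not containing $f$" to "there is a single open ideal missing a whole neighbourhood of the given point $x$", and one has to be careful that the witness ideal is pinned to $x$ rather than to the auxiliary point $y\in{\downarrow}x$. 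This is handled by exploiting that $y\leqslant x$ implies ${\uparrow}x\subseteq{\uparrow}y$, so an open ideal avoiding a neighbourhood of $y$ inside ${\downarrow}x$ can be translated (via right multiplication by $x$, a continuous retraction onto ${\downarrow}x$) to avoid a neighbourhood of $x$; once that bookkeeping is in place, $(2)$ and $(3)$ are short corollaries.
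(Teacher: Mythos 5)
Your reduction to the semilattice $E(S)$ via $\varphi(x)=xx^{-1}$ and Remark~\ref{remark-2.13} is exactly what the paper does, and your treatment of condition $(3)$ is essentially the paper's argument (note that ${\uparrow}y\neq E$ is automatic once $E(S)$ has no smallest element, so no extra care in choosing $y$ is needed there). The problem is condition $(1)$, where your argument does not close and your two proposed repairs both fail. First, to make an ideal of the form $E\setminus{\uparrow}z_0$ \emph{miss} the compact set $K=\operatorname{cl}_{E(S)}(U_y)$ you need $K\subseteq{\uparrow}z_0$, i.e.\ a single \emph{lower bound} $z_0$ of $K$; the condition you actually derive ($z_0f\neq z_0$ for all $f\in K$, i.e.\ $z_0\not\leqslant f$) says the opposite, namely that $K$ is \emph{contained} in $E\setminus{\uparrow}z_0$, and in any case a compact subset of a non-compact semilattice need not have a lower bound, so "running over $K$ and using compactness" does not produce one. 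Second, the translation $\rho_x(f)=fx$ is a retraction onto ${\downarrow}x$ fixing $x$, but $\rho_x^{-1}(U_y)$ is a neighbourhood of $y$ (since $yx=y\in U_y$), not of $x$, because $x$ itself need not lie in $U_y$; so this does not convert a neighbourhood of $y$ into one of $x$.

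The missing ideas are two facts about topological semilattices from \cite{GHKLMS}: the partial order is closed, so ${\uparrow}K$ is closed for compact $K$ (hence $I_x=E(S)\setminus{\uparrow}K$ is an \emph{open} ideal, with no lower bound for $K$ required — one only checks nonemptiness, which follows because a minimal element of the compact set $K$ would otherwise be a least element of $E(S)$); and the upper set ${\uparrow}U$ of an \emph{open} set $U$ is open (Proposition VI-1.13$(iii)$). With these, the neighbourhood of $x$ disjoint from $I_x$ is simply $U_x={\uparrow}U_y$: it is open, it contains $x$ because $y\in U_y$ and $y\leqslant x$, and ${\uparrow}U_y\subseteq{\uparrow}K$ forces $U_x\cap I_x=\varnothing$. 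This is the step your proposal circles around without landing on, and without it case $(1)$ (and therefore case $(2)$, which you correctly reduce to it) is not proved.
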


\begin{proof}
Suppose condition $(1)$ holds. We fix an arbitrary $x\in E(S)$. By Proposition~VI-1.14 of \cite{GHKLMS} the partial order on the topological semilattice $E(S)$ is closed, and hence the compact set $K=\operatorname{cl}_{E(S)}(U_y)$ has a minimal element $e$, which must also be a minimal element of ${\uparrow}K$. If ${\uparrow}K=E(S)$, then $e$ is a minimal element of $E(S)$ and hence $e$ is a least element of $E(S)$, because $ef\leqslant e$ for any $f\in E(S)$ implies $e=ef$, i.e., $e\leqslant f$. This contradicts that $E(S)$ hasn't a least element.

Then the set $I_x=E(S)\setminus
{\uparrow}\left(\operatorname{cl}_{E(S)}(U_y)\right)$ is an open
ideal in $E(S)$ and by Proposition~VI-1.13$(iii)$ from \cite{GHKLMS}
the set $U_x={\uparrow}U_y$ is an open neighbourhood of the point
$x$ in $E(S)$ such that $I_x\cap U_x=\varnothing$. Therefore for
every $x\in E(S)$ we constructed an open neighbourhood $U_x$ of the
point $x$ in $E(S)$ and an open ideal $I_x$ in $E(S)$ such that
$I_x\cap U_x=\varnothing$, and hence the topological semilattice
$E(S)$ is an \textsf{OIP}-semigroup. Now we apply
Remark~\ref{remark-2.13} and get that $(S,\tau)$ is an
\textsf{OIP}-semigroup.

We observe that every locally compact semilattice satisfies
condition $(1)$.

Suppose condition $(3)$ holds. We fix an arbitrary $x\in E(S)$.
Since the semilattice $E(S)$ does not contain a minimal idempotent
we conclude that there exists an idempotent
$e\in{\downarrow}x\setminus\{x\}$. Then by Proposition~VI-1.13$(i)$
from \cite{GHKLMS} the set $U_x=E(S)\setminus{\downarrow}e$ is open
in $E(S)$ and it is obvious that $x\in U_x={\uparrow}U_x$. Let
$y_{[x,e]}\in U_x$ be such that
$x\in\operatorname{Int}_{E(S)}({\uparrow}y_{[x,e]})$. We put
$V_x=\operatorname{Int}_{E(S)}({\uparrow}y_{[x,e]})$ and
$I_{[x,e]}=E(S)\setminus{\uparrow}y_{[x,e]}$. Then $V_x$ is an open
neighbourhood of $x$ in $E(S)$ and $I_{[x,e]}$ is an open ideal in
$E(S)$. Hence similar arguments as in case $(1)$ show that
$(S,\tau)$ is an \textsf{OIP}-semigroup.
\end{proof}
%

\section{On $I$-bisimple topological inverse semigroups}

A bisimple semigroup $S$ is called an \emph{$I$-bisimple semigroup}
if and only if $E(S)$ is order isomorphic to $\mathbb{Z}$ under the
reverse of the usual order.

In \cite{Warne1968} Warne proved the following theorem:

\begin{theorem}[{\cite[Theorem~1.3]{Warne1968}}]\label{theorem-3.1}
A regular semigroup $S$ is $I$-bisimple if and only if $S$ is
isomorphic to $\mathscr{B}_W=\mathbb{Z}\times G\times\mathbb{Z}$,
where $G$ is a group, under the multiplication
\begin{equation}\label{eq-3.1}
    (a,g,b){\cdot}(c,h,d){=}
\left\{\!\!
  \begin{array}{ll}
  (a, g\cdot f^{-1}_{b-c,c}\cdot\theta^{b-c}(h)\cdot f_{b-c,d},d-c+b),
     & \hbox{if~}~b\geqslant c;\\
  (a-b+c, f^{-1}_{c-b,a}\cdot\theta^{c-b}(g)\cdot f_{c-b,b}\cdot h,d),
     & \hbox{if~}~b\leqslant c,
  \end{array}
\right.
\end{equation}
where $\theta$ is an endomorphism of $G$, $\theta^0$ denoting the
identity automorphism of $G$, and for $m\in\mathbb{N}$, $n\in
\mathbb{Z}$,
\begin{itemize}
  \item[$(1)$] $f_{0,n}=e$ is the identity of $G$; \qquad and
  \item[$(2)$] $f_{m,n}=\theta^{m-1}(u_{n+1})\cdot
   \theta^{m-2}(u_{n+2})\cdot\ldots\cdot
   \theta(u_{n+(m-1)})\cdot u_{n+m}$, where $\{u_n\colon
   n\in\mathbb{Z}\}$ is a collection of elements of $G$ with $u_n=e$
   if $n\in\mathbb{N}$.
\end{itemize}
\end{theorem}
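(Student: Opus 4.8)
Here is my plan for proving Theorem~\ref{theorem-3.1}.

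\textbf{Overall strategy.} The statement is a Rees-style structure theorem, so the natural approach is to establish a bijection between $S$ and $\mathbb{Z}\times G\times\mathbb{Z}$ and then transport the multiplication. The plan is to fix an idempotent $e_0$ in $E(S)$ and exploit that $E(S)\cong\mathbb{Z}$ under the reverse order: write $E(S)=\{e_n:n\in\mathbb{Z}\}$ with $e_m\geqslant e_n$ iff $m\leqslant n$, so that $e_0$ is neither the largest nor smallest idempotent. Since $S$ is bisimple, all the $\mathscr{H}$-classes are pairwise isomorphic; set $G=H_{e_0}$, the maximal subgroup at $e_0$, which is a group by regularity. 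For each $n$ one picks (via the $\mathscr{D}$-relation, which is universal) elements $r_n\in R_{e_0}\cap L_{e_n}$ and their inverses, normalized so that $r_0=e_0$; these act as ``coordinate translations'' identifying each $\mathscr{H}$-class $H_{e_m}\cap L_{e_n}$-type cell with $G$. The map $(a,g,b)\mapsto$ (the element of the $a$-th row, $b$-th column $\mathscr{H}$-class corresponding to $g$) should be the required bijection.

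\textbf{Key steps, in order.} First, I would set up the indexing of $E(S)$ and record the elementary consequences of $S$ being regular and bisimple: $\mathscr{D}=S\times S$, every $\mathscr{H}$-class containing an idempotent is a group, and Green's lemma gives canonical bijections between $\mathscr{H}$-classes along $\mathscr{R}$- and $\mathscr{L}$-classes. Second, I would choose the transition elements: pick $p\in R_{e_0}\cap L_{e_1}$ and $q\in L_{e_0}\cap R_{e_1}$ (these exist because $e_0\mathscr{D}e_1$) with $pq=e_0$, $qp=e_1$, mimicking the bicyclic generators; more generally obtain transition elements realizing any $\mathscr{H}$-class as a translate of $G$. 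Third, compute the products $r_m\cdot g\cdot r_n$-type expressions; the endomorphism $\theta$ of $G$ arises as the map $g\mapsto q\,g\,p$ (conjugation-like passage between maximal subgroups induced by the ``shift'' idempotent $e_1<e_0$), and the structure constants $u_n\in G$ record the defect by which the chosen transition elements fail to multiply coherently — exactly as in the classical Reilly/Warne analysis. Fourth, verify that with $f_{m,n}$ defined by the stated product formula, the transported multiplication is precisely \eqref{eq-3.1}; this is a bookkeeping computation splitting into the cases $b\geqslant c$ and $b\leqslant c$ according to whether one composes ``downward'' or ``upward'' in the idempotent chain. Finally, for the converse, I would check directly that $\mathscr{B}_W$ with operation \eqref{eq-3.1} is associative, is regular (exhibit an inverse of $(a,g,b)$, which should be $(b,\,\text{something involving }g^{-1}\text{ and }f\text{'s},\,a)$), and has idempotents exactly the triples of the form $(n,f_{0,n}^{-1}\cdot(\cdots),n)$ forming a chain order-isomorphic to $\mathbb{Z}$ reversed, and that it is bisimple.

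\textbf{Main obstacle.} The genuinely delicate part is the third step: pinning down \emph{why} the structure data must have the specific form $(\theta,\{u_n\})$ with $f_{m,n}$ given by that iterated $\theta$-twisted product, and in particular controlling how the transition elements chosen for non-adjacent idempotents $e_0,e_n$ decompose through the intermediate ones. One has freedom in choosing the $r_n$, and a poor choice obscures the formula; the right move is to build the $r_n$ recursively from a single ``unit shift'' and let the failure of strict multiplicativity be absorbed into the $u_n$'s, with the normalization $u_n=e$ for $n\in\mathbb{N}$ reflecting a one-sided coherence that comes for free. Verifying associativity of \eqref{eq-3.1} in the converse direction is also notably tedious because of the three-fold case split, but it is routine; I would organize it by checking the two generators-and-relations description (a bicyclic-type pair together with the copy of $G$) rather than brute-forcing the triple product.
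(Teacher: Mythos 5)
This theorem is not proved in the paper at all: it is quoted verbatim from Warne's 1968 paper (Theorem~1.3 of \cite{Warne1968}), so there is no in-paper argument to compare against. Your outline is essentially the standard Reilly--Warne structure argument (fix $e_0$, take $G=H_{e_0}$, build transition elements along the two-sided idempotent chain, extract $\theta$ and the defect elements $u_n$ with $u_n=e$ on the positive side, then verify associativity, regularity, and the idempotent chain for the converse), which is exactly the route of the cited source, so the plan is sound.
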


For arbitrary $i,j\in\mathbb{Z}$ we denote
$G_{i,j}=\{(i,g,j)\in\mathscr{B}_W\colon g\in G\}$.

\begin{theorem}\label{theorem-3.2}
Let $S$ be a regular $I$-bisimple semitopological semigroup. Then
there exist a group $G$ with the identity element $e$, an
endomorphism $\theta\colon G\rightarrow G$, a collection
$\{u_n\colon n\in\mathbb{Z}\}$ of elements of $G$ with the property
$u_n=e$ if $n\in\mathbb{N}$ and a topology on the semigroup
$\mathscr{B}_W$ such that the following assertions hold:
\begin{itemize}
  \item[$(i)$] $S$ is topologically isomorphic to a semitopological
   semigroup $\mathscr{B}_W$ (not necessarily with the product
   topology);

  \item[$(ii)$] $G_{i,j}$ and $G_{k,l}$ are homeomorphic subspaces
   of $\mathscr{B}_W$ for all $i,j,k,l\in\mathbb{Z}$;

  \item[$(iii)$] $G_{i,i}$ and $G_{k,k}$ are topologically
   isomorphic semitopological subgroups of $\mathscr{B}_W$ with the
   induced topology from $\mathscr{B}_W$ for all $i,k\in\mathbb{Z}$;

  \item[$(iv)$] $\theta$ is a continuous endomorphism of the
   semitopological group $G=G_{i,i}$ with the induced from
   $\mathscr{B}_W$ topology, for an arbitrary integer $i$;

  \item[$(v)$] for every element $(i,g,j)\in\mathscr{B}_W$ there
   exists an open neighbourhood $U_{(i,g,j)}$ of the point $(i,g,j)$
   in $\mathscr{B}_W$ such that $U_{(i,g,j)}\subseteq
   \bigcup\{G_{i-k,j-k}\colon k=0,1,2,3,\ldots\}$;

  \item[$(vi)$] $E(S)$ is a discrete subspace of $S$.
\end{itemize}
\end{theorem}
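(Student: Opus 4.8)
The plan is to apply Warne's structure theorem (Theorem~\ref{theorem-3.1}) to the algebraic reduct of $S$, and then transport the topology of $S$ along the resulting isomorphism, checking that each assertion $(i)$--$(vi)$ follows from the description of the multiplication \eqref{eq-3.1} together with separate continuity. First I would invoke Theorem~\ref{theorem-3.1}: since $S$ is a regular $I$-bisimple semigroup, there is a group $G$, an endomorphism $\theta$ and a family $\{u_n\colon n\in\mathbb{Z}\}$ with $u_n=e$ for $n\in\mathbb{N}$ such that $S$ is \emph{algebraically} isomorphic to $\mathscr{B}_W$. Fixing such an isomorphism $\psi\colon S\to\mathscr{B}_W$ and pushing the topology $\tau$ of $S$ forward, we get a topology on $\mathscr{B}_W$ making it a semitopological semigroup and making $\psi$ a topological isomorphism; this is $(i)$. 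All remaining items are then statements about $\mathscr{B}_W$ with this topology.

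For $(ii)$ and $(iii)$ I would mimic the proof of Proposition~\ref{proposition-2.3}$(i)$: for $i,j,k,l\in\mathbb{Z}$ the map $x\mapsto (k,e,i)\cdot x\cdot(j,e,l)$ is continuous as a composition of a left and a right translation in the semitopological semigroup $\mathscr{B}_W$, and a direct computation from \eqref{eq-3.1} shows that its restriction carries $G_{i,j}$ bijectively onto $G_{k,l}$, with inverse the analogous map with the roles of $(i,j)$ and $(k,l)$ swapped; hence these restrictions are mutually inverse homeomorphisms. When $i=j$ and $k=l$ the same map restricts to a group isomorphism $G_{i,i}\to G_{k,k}$ (one checks $(i,e,i)$ is the identity of the subsemigroup $G_{i,i}$ and that the translations respect the group operation on the diagonal blocks), so these are topologically isomorphic semitopological subgroups, giving $(iii)$; and identifying $G$ with $G_{i,i}$ this furnishes the semitopological group topology on $G$ referred to in $(iv)$.

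For $(iv)$ the point is that $\theta$ is realized on the diagonal by an internal algebraic operation of $\mathscr{B}_W$ built from multiplication. From \eqref{eq-3.1} one sees that for $(i,g,i)\in G_{i,i}$ one has, e.g., $(i,e,i+1)\cdot(i+1,e,i+1)\cdot\ldots$ — more precisely conjugation/multiplication by appropriate idempotents $(i,e,i+1)$ and $(i+1,e,i)$ sends $(i,g,i)$ to $(i,\theta(g)\cdot(\text{constant}),i)$ up to a fixed translate depending only on the $u_n$'s; composing with the translation by that constant element of the group (continuous, by separate continuity) yields $\theta$ as a composition of continuous maps, hence $\theta$ is continuous. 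For $(v)$ I would argue exactly as in Proposition~\ref{proposition-2.3}$(ii)$: left and right translations by the idempotents $(i+1,e,i+1)$ and $(j+1,e,j+1)$ are retractions with closed image, so one finds a neighbourhood $W_{(i,g,j)}$ of $(i,g,j)$ missing $(i+1,e,i+1)\mathscr{B}_W\cup\mathscr{B}_W(j+1,e,j+1)$, then shrinks it using separate continuity to a $U_{(i,g,j)}$ with $(i,e,i)\cdot U_{(i,g,j)}$ and $U_{(i,g,j)}\cdot(j,e,j)$ also inside $W_{(i,g,j)}$; the analogue of Remark~\ref{remark-1.2} (namely that in a product $k-l=i-j+m-n$, which is immediate from \eqref{eq-3.1}) forces every $(m,a,n)\in U_{(i,g,j)}$ to satisfy $m\le i$, $n\le j$ and $m-n=i-j$, i.e.\ $(m,a,n)\in G_{i-k,j-k}$ for some $k\ge0$.

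Finally, for $(vi)$: the idempotents of $\mathscr{B}_W$ are precisely the elements $(i,e,i)$, $i\in\mathbb{Z}$ (computable from \eqref{eq-3.1}), so $E(S)\cong\{(i,e,i)\colon i\in\mathbb{Z}\}$ sits on the diagonal; applying $(v)$ to the point $(i,e,i)$ gives a neighbourhood $U_{(i,e,i)}$ contained in $\bigcup_{k\ge0}G_{i-k,i-k}$, and intersecting with $E(S)$ leaves only idempotents $(i-k,e,i-k)$; to isolate $(i,e,i)$ one further removes $(i-1,e,i-1)\mathscr{B}_W(i-1,e,i-1)$ (closed, as the image of a continuous retraction) which contains every $(i-k,e,i-k)$ with $k\ge1$ but not $(i,e,i)$. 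Hence $\{(i,e,i)\}$ is relatively open in $E(S)$, so $E(S)$ is discrete. The main obstacle I anticipate is item $(iv)$: one must exhibit $\theta$ (and the constant group-translation compensating for the $f_{m,n}$ factors) purely in terms of the semitopological structure via \eqref{eq-3.1}, which requires care in tracking the $u_n$-dependence of $f_{m,n}$; everything else is a routine adaptation of the arguments already given for $\mathscr{B}(S,\mathbb{Z},\theta)$ in Section~2.
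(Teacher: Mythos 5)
Items $(i)$--$(v)$ of your proposal follow the paper's proof essentially verbatim: Warne's theorem plus transport of topology for $(i)$, the sandwich maps $x\mapsto(k,e,i)\cdot x\cdot(j,e,l)$ for $(ii)$ and $(iii)$, and the closed-retract argument with Remark~\ref{remark-1.2} for $(v)$. For $(iv)$ the anticipated difficulty with the $f_{m,n}$-factors does not actually arise: the paper uses $f(x)=x\cdot(1,e,1)$ and the constant vanishes because $f_{1,0}=u_1=e$ (as $1\in\mathbb{N}$), so $(0,g,0)\cdot(1,e,1)=(1,\theta(g),1)$ on the nose; your fallback of absorbing a constant by a further translation in the semitopological group $G_{i,i}$ would also be fine.

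The genuine gap is in $(vi)$. From \eqref{eq-3.1} one computes
$(i-1,e,i-1)\mathscr{B}_W(i-1,e,i-1)=\left\{(a,g,b)\in\mathscr{B}_W\colon a\geqslant i-1 \hbox{ and } b\geqslant i-1\right\}$,
so this set \emph{contains} $(i,e,i)$ --- removing it deletes the very point you are trying to isolate --- and it \emph{omits} every idempotent $(i-k,e,i-k)$ with $k\geqslant 2$, which are exactly the ones you still need to exclude after applying $(v)$. You have in effect confused the local submonoid $\iota\mathscr{B}_W\iota$, whose idempotents form ${\downarrow}\iota$ in the natural partial order, with ${\uparrow}\iota$ (recall that in $\mathscr{B}_W$ one has $(n,e,n)\leqslant(m,e,m)$ if and only if $n\geqslant m$). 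The repair is the one the paper uses: the equalizer $\{x\in S\colon x\cdot(i-1,e,i-1)=(i-1,e,i-1)\}$ is closed (Theorem~1.4.1 of \cite{Engelking1989}, since the translation is continuous and $S$ is Hausdorff) and meets $E(S)$ precisely in $\{(n,e,n)\colon n\leqslant i-1\}={\uparrow}(i-1,e,i-1)$; its complement, intersected with the complement of the closed set $(i+1,e,i+1)\mathscr{B}_W$ (or with the neighbourhood from $(v)$), is an open set meeting $E(S)$ only in $(i,e,i)$.
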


\begin{proof}
The first part of the theorem and assertion $(i)$ follow from
Theorem~\ref{theorem-3.1}.

$(ii)$ We fix arbitrary $i,j,k,l\in\mathbb{Z}$ and define the map
$\varphi_{i,j}^{k,l}\colon \mathscr{B}_W\rightarrow \mathscr{B}_W$
by the formula $\varphi_{i,j}^{k,l}(x)=(k,e,i)\cdot x\cdot(j,e,l)$.
Then formula (\ref{eq-3.1}) implies that the restriction
$\varphi_{i,j}^{k,l}\big|_{G_{i,j}}\colon G_{i,j}\rightarrow
G_{k,l}$ is a bijective map. Now the compositions
$\varphi_{i,j}^{k,l}\big|_{G_{i,j}}{\circ}\varphi_{k,l}^{i,j}\big|_{G_{k,l}}$ and
$\varphi_{k,l}^{i,j}\big|_{G_{k,l}}{\circ}
\varphi_{i,j}^{k,l}\big|_{G_{i,j}}$ are identity maps of the sets
$G_{i,j}$ and $G_{k,l}$, respectively, and hence we have that the
map $\varphi_{i,j}^{k,l}\big|_{G_{i,j}}\colon G_{i,j}\rightarrow
G_{k,l}$ is invertible to $\varphi_{k,l}^{i,j}\big|_{G_{k,l}}\colon
G_{k,l}\rightarrow G_{i,j}$. Since $\mathscr{B}_W$ is a
semitopological semigroup we conclude that
$\varphi_{i,j}^{k,l}\big|_{G_{i,j}}\colon G_{i,j}\rightarrow
G_{k,l}$ and $\varphi_{k,l}^{i,j}\big|_{G_{k,l}}\colon
G_{k,l}\rightarrow G_{i,j}$ are continuous maps and hence the map
$\varphi_{i,j}^{k,l}\big|_{G_{i,j}}\colon G_{i,j}\rightarrow
G_{k,l}$ is a homeomorphism.

$(iii)$ Formula (\ref{eq-3.1}) implies that $G_{i,i}$ and $G_{k,k}$
are semitopological subgroups of $\mathscr{B}_W$ with the induced
topology from $\mathscr{B}_W$ for all $i,k\in\mathbb{Z}$. Simple
verifications show that the map
$\varphi^{k,k}_{i,i}\big|_{G_{i,i}}\colon G_{i,i}\rightarrow
G_{k,k}$ is a topological isomorphism.

$(iv)$ Assertion $(iii)$ implies that for arbitrary
$i,k\in\mathbb{Z}$ the subspaces $G_{i,i}$ and $G_{k,k}$ with the
induced semigroup operation are topologically isomorphic subgroups
of $\mathscr{B}_W$ and hence the semitopological group $G$ is
correctly determined. Next we consider the map $f\colon
G=G_{0,0}\rightarrow G=G_{1,1}$ defined by the formula
$f(x)=x\cdot(1,e,1)$. Then by formula (\ref{eq-3.1}) we have that
\begin{equation*}
    (0,g,0)\cdot(1,e,1)=
    (1,f^{-1}_{1,0}\cdot\theta(g)\cdot f_{1,0}\cdot e,1)=
    (1,e^{-1}\cdot\theta(g)\cdot e\cdot e,1)=
    (1,\theta(g),1),
\end{equation*}
and since the translations in $\mathscr{B}_W$ are continuous we
conclude that $\theta$ is a continuous endomorphism of the
semitopological group $G$.

$(v)$ Since the left and right translations in a semitopological
semigroup are continuous maps and left and right translations by an
idempotent are retractions, Exercise~1.5.C from \cite{Engelking1989}
implies that $(i+1,e,i+1)\mathscr{B}_W$ and
$\mathscr{B}_W(j+1,e,j+1)$ are closed subsets in $\mathscr{B}_W$.
Hence there exists an open neighbourhood $W_{(i,g,j)}$ of the point
$(i,g,j)$ in $\mathscr{B}_W$ such that
\begin{equation*}
W_{(i,g,j)}\subseteq\mathscr{B}_W\setminus
\left((i+1,e,i+1)\mathscr{B}_W\cup \mathscr{B}_W(j+1,e,j+1)\right).
\end{equation*}
Since the semigroup operation in $\mathscr{B}_W$ is separately
continuous we conclude that there exists an open neighbourhood
$U_{(i,g,j)}$ of the point $(i,g,j)$ in $\mathscr{B}_W$ such that
\begin{equation*}
    U_{(i,g,j)}\subseteq W_{(i,g,j)}, \quad
    (i,e,i)\cdot U_{(i,g,j)}\subseteq W_{(i,g,j)}
    \quad \hbox{and} \quad
    U_{(i,g,j)}\cdot(j,e,j)\subseteq W_{(i,g,j)}.
\end{equation*}
Next we shall show that $U_{(i,g,j)}\subseteq
\bigcup\{G_{i-k,j-k}\colon k=0,1,2,3,\ldots\}$. Suppose the
contrary: there exists $(m,a,n)\in U_{(i,g,j)}$ such that
$(m,a,n)\notin\bigcup\{G_{i-k,j-k}\colon k=0,1,2,3,\ldots\}$. Then
we have that $m\leqslant i$, $n\leqslant j$ and $m-n\neq i-j$. If
$m-n>i-j$ then formula (\ref{eq-3.1}) implies that there exists
$u\in G$ such that
\begin{equation*}
(m,a,n)\cdot (j,e,j)=(m-n+j,u,j) \notin\mathscr{B}_W\setminus
(i+1,e,i+1)\mathscr{B}_W,
\end{equation*}
because $m-n+j>i-j+j=i$, and hence $(m,a,n)\cdot (j,e,j)\notin
W_{(i,s,j)}$. Similarly, if $m-n<i-j$ then formula (\ref{eq-3.1})
implies that there exists $v\in G$ such that
\begin{equation*}
(i,e,i)\cdot(m,a,n)=(i,v,n-m+i) \notin\mathscr{B}_W\setminus
\mathscr{B}_W(j+1,e,j+1),
\end{equation*}
because $n-m+i>j-i+i=j$, and hence $(i,e,i)\cdot(m,a,n)\notin
W_{(i,s,j)}$. This completes the proof of our assertion.

$(vi)$ The definition of an $I$-bisimple semigroup implies that
$E(S)$ is order isomorphic to $\mathbb{Z}$ under the reverse of the
usual order and hence $E(S)$ is a subsemigroup of $S$. Then we have
that $E(S)=\{(n,e,n)\colon n\in\mathbb{Z}\}$ (see \cite{Warne1968}).
We fix an arbitrary $(i,e,i)\in E(S)$. Since translations on
$(i,e,i)$ in $S$ are continuous retractions Theorem~1.4.1 of
\cite{Engelking1989} implies that the set $\{x\in S\colon
x\cdot(i-1,e,i-1)=(i-1,e,i-1)\}$ is closed in $S$, and
Exercise~1.5.C from \cite{Engelking1989} implies that $(i+1,e,i+1)S$
is a closed subset in $S$ too. This implies that $(i,e,i)$ is an
isolated point of $E(S)$ with the induced from $S$ topology. This
completes the proof of our assertion.
\end{proof}

\begin{theorem}\label{theorem-3.3}
Let $S$ be a regular $I$-bisimple semitopological semigroup. If $S$
has a maximal compact subgroup then the following statements hold:
\begin{itemize}
  \item[$(i)$] $S$ is topologically isomorphic to
   $\mathscr{B}_W=\mathbb{Z}\times G\times\mathbb{Z}$ with the
   product topology;
  \item[$(ii)$] $S$ is a locally compact topological inverse
   semigroup.
\end{itemize}
\end{theorem}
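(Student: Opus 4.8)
The plan is to bootstrap from Theorem~\ref{theorem-3.2}, whose conclusion already realizes $S$ as $\mathscr{B}_W=\mathbb{Z}\times G\times\mathbb{Z}$ with a semigroup topology for which each $G_{i,j}$ is homeomorphic to $G=G_{0,0}$ via the obvious coordinate bijection (the restriction of the translation map $\varphi^{k,l}_{i,j}$ used in the proof of Theorem~\ref{theorem-3.2}$(ii)$--$(iii)$), $\theta$ is a continuous endomorphism of $G$, and every point has a neighbourhood lying inside the ``column'' $\bigcup\{G_{i-k,j-k}\colon k=0,1,2,\dots\}$. Since the maximal subgroups of the $I$-bisimple semigroup $S$ are precisely the $\mathscr{H}$-classes $G_{n,n}$, the hypothesis that $S$ has a compact maximal subgroup forces $G$, and hence every subspace $G_{i,j}$, to be compact; being Hausdorff, each $G_{i,j}$ is then closed in $\mathscr{B}_W$.

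For $(i)$ I would show that every $G_{i,j}$ is open in $\mathscr{B}_W$, by the argument used in the proof of Theorem~\ref{theorem-2.4}, with the group $G$ playing the role of the compact ideal. Fix $(i,g,j)$ and let $h\colon\mathscr{B}_W\to\mathscr{B}_W$, $h(x)=x\cdot(j-1,e,j-1)$; this right translation is continuous, and a direct computation with formula~\eqref{eq-3.1} --- the same bookkeeping of the index difference $m-n$ as in Remark~\ref{remark-1.2} and in the proof of Theorem~\ref{theorem-3.2}$(v)$ --- gives $h^{-1}(G_{i-1,j-1})=\bigcup\{G_{i-k,j-k}\colon k=1,2,3,\dots\}$, which is closed because $G_{i-1,j-1}$ is closed. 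Taking the neighbourhood $U_{(i,g,j)}\subseteq\bigcup\{G_{i-k,j-k}\colon k=0,1,2,3,\dots\}$ supplied by Theorem~\ref{theorem-3.2}$(v)$, the set $U_{(i,g,j)}\setminus h^{-1}(G_{i-1,j-1})$ is an open neighbourhood of $(i,g,j)$ lying inside $G_{i,j}$. Hence all $G_{i,j}$ are open; combining this with the identification of the subspace topology of each $G_{i,j}$ with that of $G$ from Theorem~\ref{theorem-3.2}$(ii)$--$(iii)$, the topology on $\mathscr{B}_W$ is exactly the product topology on $\mathbb{Z}\times G\times\mathbb{Z}$ with $\mathbb{Z}$ discrete, which is $(i)$.

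For $(ii)$, first note that $S$ is regular and $E(S)$ is a chain, so $S$ is an inverse semigroup, with $(a,g,b)^{-1}=(b,g^{-1},a)$ in $\mathscr{B}_W$ (the $f_{m,n}$ and $\theta$ do not enter, since the relevant index differences vanish). Next, $G$ is a compact Hausdorff semitopological group, so by the classical Ellis theorem on the automatic joint continuity of multiplication and continuity of inversion in locally compact semitopological groups, $G$ is in fact a topological group. Now, in the product topology the case distinction in~\eqref{eq-3.1} depends only on the discrete $\mathbb{Z}$-coordinates, so it suffices to check that on each block $G_{a,b}\times G_{c,d}$ the operation is jointly continuous; there it is a composition of the (now jointly continuous) multiplication of $G$, a fixed power of the continuous map $\theta$, and translations by the fixed elements $f_{m,n}$, hence jointly continuous. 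Likewise $(a,g,b)\mapsto(b,g^{-1},a)$ is continuous because inversion in $G$ is. Therefore $\mathscr{B}_W$ is a topological inverse semigroup, and it is locally compact since every point has the compact open neighbourhood $G_{i,j}$.

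The routine parts are the index computation identifying $h^{-1}(G_{i-1,j-1})$ and the bookkeeping check that the homeomorphisms of Theorem~\ref{theorem-3.2}$(ii)$--$(iii)$ are the obvious coordinate bijections, so that ``$G_{i,j}$ open and homeomorphic to $G$'' really yields the product topology. The step I expect to be the genuine obstacle is the semitopological-to-topological passage in $(ii)$: mere separate continuity of the operation of $G$ does not make $\mathscr{B}_W$ a topological semigroup, because the products mixing two different levels $G_{a,b}$ and $G_{c,d}$ require joint continuity, so one must invoke compactness through the Ellis theorem to upgrade $G$ to a topological group before formula~\eqref{eq-3.1} becomes jointly continuous.
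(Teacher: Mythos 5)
Your proposal is correct and follows essentially the same route as the paper: openness of each $\mathscr{H}$-class $G_{i,j}$ via the closed preimage of a compact (hence closed) $\mathscr{H}$-class under a right translation combined with Theorem~\ref{theorem-3.2}$(v)$, then Ellis' theorem to upgrade $G$ to a topological group and a blockwise verification of joint continuity and of inversion from formula~(\ref{eq-3.1}). Your index bookkeeping (translating by $(j-1,e,j-1)$ into $G_{i-1,j-1}$) is in fact the correct version of the translation the paper writes as $x\mapsto x\cdot(i-1,e,i-1)$ landing in $G_{i-1,i-1}$, which as printed only handles the case $i=j$; otherwise the two arguments coincide.
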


\begin{proof}
$(i)$ By item $(i)$ of Theorem~\ref{theorem-3.2} we have that the
semitopological semigroup $S$ is topologically isomorphic to a
semitopological semigroup $\mathscr{B}_W=\mathbb{Z}\times
G\times\mathbb{Z}$. It is obvious to show that for arbitrary
$i,j\in\mathbb{Z}$ the $\mathscr{H}$-class $G_{i,j}$ of
$\mathscr{B}_W$ is an open subset in $\mathscr{B}_W$. We fix an
arbitrary $(i,g,j)\in G_{i,j}$. Then by
Theorem~\ref{theorem-3.2}$(v)$ there exists an open neighbourhood
$U_{(i,g,j)}$ of the point $(i,g,j)$ in $\mathscr{B}_W$ such that
$U_{(i,g,j)}\subseteq \bigcup\left\{G_{i-k,j-k}\colon
k=0,1,2,3,\ldots\right\}$. Since the semitopological semigroup $S$
has a maximal compact subgroup, Theorem~\ref{theorem-3.2}$(ii)$
implies that every $\mathscr{H}$-class $G_{m,n}$ of $\mathscr{B}_W$
is a compact subset in $\mathscr{B}_W$. Then the separate continuity
of the semigroup operation in $\mathscr{B}_W$ and Theorem~1.4.1 of
\cite{Engelking1989} imply that $\left\{x\in
\mathscr{B}_W\colon x\cdot(i-1,e,i-1)\in G_{i-1,i-1}\right\}$ is a closed set
in $\mathscr{B}_W$. Therefore there exists an open
neighbourhood $V_{(i,g,j)}\subseteq U_{(i,g,j)}$ of the point
$(i,g,j)$ in $\mathscr{B}_W$ such that $V_{(i,g,j)}\subseteq
G_{i,j}$. This completes the proof of the statement.

$(ii)$ Statement $(i)$, Theorem~\ref{theorem-3.2}$(ii)$ and
Theorem~3.3.13 of \cite{Engelking1989} imply that $S$ is a locally
compact space. Then statement $(i)$, Corollary~3.3.10 from
\cite{Engelking1989} and Ellis Theorem (see Theorem~2 of
\cite{Ellis1957} or Theorem~1.18 of \cite[Vol.~1]{CHK}) imply that
every maximal subgroup $G_{n,n}$ of $\mathscr{B}_W$ is a topological
group. We put $G=G_{n,n}$ for some $n\in\mathbb{Z}$ with the induced
topology from $\mathscr{B}_W$. Assertion $(iii)$ of
Theorem~\ref{theorem-3.2} implies that the topological group $G$ is
correctly defined. Let $\mathfrak{B}_G$ be a base of the topology of
the topological group $G$. Then statement $(i)$ and assertion $(ii)$
of Theorem~\ref{theorem-3.2} imply that the family
\begin{equation*}
    \mathfrak{B}_{\mathscr{B}_W}=\left\{U_{i,j}\colon U\in
    \mathfrak{B}_G \hbox{ and } i,j\in\mathbb{Z}\right\},
\end{equation*}
where $U_{i,j}=\left\{(i,x,j)\colon x\in U\right\}\subseteq
G_{i,j}$, determines a base of the topology of the semitopological
semigroup $\mathscr{B}_W$.

Since $G$ is a topological group and $\theta\colon G\rightarrow G$
is a continuous homomorphism, we conclude that for arbitrary
integers $a,b,c,d$ with $b\geqslant c$, arbitrary $g,h\in G$ and any
open neighbourhood $W$ of the point $g\cdot
f^{-1}_{b-c,c}\cdot\theta^{b-c}(h)\cdot f_{b-c,d}$ in the
topological space $G$ there exist open neighbourhoods $W_{g}$ and
$W_{h}$ of the points $g$ and $h$ in $G$, respectively, such that
\begin{equation*}
    W_{g}\cdot f^{-1}_{b-c,c}\cdot\theta^{b-c}(W_{h})\cdot
    f_{b-c,d}\subseteq W.
\end{equation*}
Then in the case when $b\geqslant c$ we have that
\begin{equation*}
    (a,W_{g},b){\cdot}(c,W_{h},d){\subseteq}
    (a, W_{g}\cdot f^{-1}_{b-c,c}\cdot\theta^{b-c}(W_{h})\cdot
    f_{b-c,d},d-c+b){\subseteq} (a, W,d-c+b).
\end{equation*}
Similarly, the continuity of the group operation in $G$ and the
continuity of the homomorphisms $\theta$ imply that for arbitrary
integers $a,b,c,d$ with $b\leqslant c$, arbitrary $g,h\in G$ and any
open neighbourhood $U$ of
$f^{-1}_{c-b,a}\cdot\theta^{c-b}(g)\cdot f_{c-b,b}\cdot h$ in the
topological space $G$ there exist open neighbourhoods $U_{g}$ and
$U_{h}$ of the points $g$ and $h$ in $G$, respectively, such that
\begin{equation*}
    f^{-1}_{c-b,a}\cdot\theta^{c-b}(U_{g})\cdot f_{c-b,b}\cdot
    U_{h}\subseteq U.
\end{equation*}
Then in the case when $b\leqslant c$ we have that
\begin{equation*}
    (a,U_{g},b){\cdot}(c,U_{h},d)\subseteq
    (a-b+c, f^{-1}_{c-b,a}\cdot\theta^{c-b}(U_{g})\cdot
    f_{c-b,b}\cdot U_{h},d)\subseteq (a-b+c,U,d).
\end{equation*}
Hence the semigroup operation is continuous in $\mathscr{B}_W$.

Also, since the inversion in $G$ is continuous we have that for
every element $g$ of $G$ and any open neighbourhood $W_{g^{-1}}$ of
its inverse $g^{-1}$ in $G$ there exists open neighbourhood $U_{g}$
of $g$ in $G$ such that $\left(U_{g}\right)^{-1}\subseteq
W_{g^{-1}}$. Then we get that
$
    (a,U_{g},b)^{-1}\subseteq (b,W_{g^{-1}},a),
$
for arbitrary integers $a$ and $b$. This completes the proof that
$\mathscr{B}_W$ is a topological inverse semigroup.
\end{proof}

If $S$ is a topological inverse semigroup then the maps
$\mathfrak{l}\colon S\rightarrow E(S)$ and $\mathfrak{r}\colon
S\rightarrow E(S)$ defined by the formulae $\mathfrak{l}(x)=x\cdot
x^{-1}$ and $\mathfrak{r}(x)=x^{-1}\cdot x$ are continuous. Hence
Theorem~\ref{theorem-3.2} implies the following corollary:

\begin{corollary}\label{corollary-3.4}
Let $S$ be a regular $I$-bisimple topological inverse semigroup.
Then every $\mathscr{H}$-class of $S$ is a closed-and-open subset of
$S$.
\end{corollary}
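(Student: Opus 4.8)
The plan is to realize each $\mathscr{H}$-class of $S$ as an intersection of preimages of singleton idempotents under the two continuous maps $\mathfrak{l}$ and $\mathfrak{r}$, and then to exploit the discreteness of the band $E(S)$ obtained in Theorem~\ref{theorem-3.2}$(vi)$. First I would invoke the standard description of Green's relations in an inverse semigroup (see \cite[Section~2.1]{CP}): for $a,b\in S$ one has $a\,\mathscr{R}\,b$ if and only if $aa^{-1}=bb^{-1}$, and $a\,\mathscr{L}\,b$ if and only if $a^{-1}a=b^{-1}b$; since $\mathscr{H}=\mathscr{R}\cap\mathscr{L}$, the $\mathscr{H}$-class of an element $a\in S$ is exactly
\[
H_a=\{x\in S\colon xx^{-1}=aa^{-1}\ \text{and}\ x^{-1}x=a^{-1}a\}=\mathfrak{l}^{-1}(\{aa^{-1}\})\cap\mathfrak{r}^{-1}(\{a^{-1}a\}).
\]

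Next I would use that, since $S$ is a topological inverse semigroup, the maps $\mathfrak{l},\mathfrak{r}\colon S\rightarrow E(S)$ defined by $\mathfrak{l}(x)=x\cdot x^{-1}$ and $\mathfrak{r}(x)=x^{-1}\cdot x$ are continuous, as noted immediately before the statement. By Theorem~\ref{theorem-3.2}$(vi)$ the band $E(S)$ is a discrete subspace of $S$, so every singleton $\{e\}$ with $e\in E(S)$ is simultaneously open and closed in $E(S)$. Hence $\mathfrak{l}^{-1}(\{aa^{-1}\})$ and $\mathfrak{r}^{-1}(\{a^{-1}a\})$ are closed-and-open subsets of $S$, and therefore so is their intersection $H_a$. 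This would complete the proof.

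I do not expect any genuine obstacle here; the only points that need care are invoking the correct inverse-semigroup characterization of the $\mathscr{H}$-relation and observing that discreteness of $E(S)$ yields \emph{closed}-and-open singletons, not merely open ones. As an alternative presentation one may instead identify $S$ with $\mathscr{B}_W$ via Theorem~\ref{theorem-3.2}, compute from formula~(\ref{eq-3.1}) that $\mathfrak{l}((m,a,n))=(m,e,m)$ and $\mathfrak{r}((m,a,n))=(n,e,n)$, so that $\mathfrak{l}^{-1}(\{(i,e,i)\})\cap\mathfrak{r}^{-1}(\{(j,e,j)\})=G_{i,j}$, and conclude the clopenness exactly as above; note that Theorem~\ref{theorem-3.2}$(v)$ is not needed for this corollary, although it would supply a separate proof of openness along the lines of the argument in Theorem~\ref{theorem-3.3}$(i)$.
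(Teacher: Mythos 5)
Your proof is correct and is precisely the argument the paper intends: the remark immediately preceding the corollary supplies the continuity of $\mathfrak{l}$ and $\mathfrak{r}$, Theorem~\ref{theorem-3.2}$(vi)$ supplies the discreteness of $E(S)$, and each $\mathscr{H}$-class is then the intersection $\mathfrak{l}^{-1}(\{aa^{-1}\})\cap\mathfrak{r}^{-1}(\{a^{-1}a\})$ of preimages of clopen singletons. There is no substantive difference from the paper's route.
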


A topological space $X$ is called \emph{Baire} if for each sequence
$A_1, A_2,\ldots, A_i,\ldots$ of nowhere dense subsets of $X$ the
union $\displaystyle\bigcup_{i=1}^\infty A_i$ is a co-dense subset
of $X$~\cite{Engelking1989}.

Since every Hausdorff Baire topology on a countable topological
group is discrete, Corollary~\ref{corollary-3.4} implies the
following:

\begin{corollary}\label{corollary-3.5}
Every regular $I$-bisimple countable Hausdorff Baire topological
inverse semigroup is discrete.
\end{corollary}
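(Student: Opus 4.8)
The plan is to reduce the statement to the fact recalled just before it — that a countable Hausdorff Baire topological group is discrete — and then to spread discreteness over all of $S$ via its $\mathscr{H}$-class decomposition. First I would invoke Theorem~\ref{theorem-3.2}: since $S$ is a regular $I$-bisimple topological inverse semigroup, it is topologically isomorphic to $\mathscr{B}_W=\mathbb{Z}\times G\times\mathbb{Z}$, so without loss of generality $S=\mathscr{B}_W$, the $\mathscr{H}$-classes of $S$ are exactly the sets $G_{i,j}$, $i,j\in\mathbb{Z}$, and $S=\bigcup\{G_{i,j}\colon i,j\in\mathbb{Z}\}$ is a partition into countably many pieces. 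Because $S$ is a \emph{topological inverse} semigroup, multiplication is jointly continuous and inversion is continuous, and therefore the maximal subgroup $G_{0,0}$, equipped with the subspace topology, is a topological group; it is countable and Hausdorff since $S$ is. By Corollary~\ref{corollary-3.4} every $\mathscr{H}$-class $G_{i,j}$ is a closed-and-open subset of $S$; in particular $G_{0,0}$ is open in $S$.

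The heart of the argument is the next step. An open subspace of a Baire space is Baire, so $G_{0,0}$ is a countable Hausdorff Baire topological group, hence discrete by the fact recalled before the statement. Concretely: $G_{0,0}$ is $T_1$ and countable, so it is the union of its countably many singletons; if all of these were nowhere dense, $G_{0,0}$ would fail to be Baire, so some singleton has nonempty interior, i.e.\ $G_{0,0}$ has an isolated point, and homogeneity of a topological group then forces every point to be isolated.

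Finally I would propagate this to all of $S$. By Theorem~\ref{theorem-3.2}$(ii)$ each $\mathscr{H}$-class $G_{i,j}$ is homeomorphic to $G_{0,0}$, hence discrete; by Corollary~\ref{corollary-3.4} each $G_{i,j}$ is open in $S$. Thus $S$ is covered by open subspaces each of which is discrete, so every point of $S$ is isolated, i.e.\ $S$ is discrete.

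As for the main obstacle: there is essentially no computation, and the only points that need care are (a) observing that being a topological \emph{inverse} semigroup — rather than merely semitopological — is what makes $G_{0,0}$ an honest topological group, so that homogeneity is available; and (b) that the off-diagonal $\mathscr{H}$-classes $G_{i,j}$ with $i\neq j$, which are only homeomorphic copies of $G_{0,0}$ and carry no group structure of their own, are handled by Theorem~\ref{theorem-3.2}$(ii)$ together with the openness supplied by Corollary~\ref{corollary-3.4}. Everything else is a routine appeal to the Baire category property and the structure theory already established.
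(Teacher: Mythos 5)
Your proof is correct and follows exactly the route the paper intends: Corollary~\ref{corollary-3.4} makes the $\mathscr{H}$-classes open, so the maximal subgroup is an open (hence Baire) countable Hausdorff topological group and therefore discrete, and Theorem~\ref{theorem-3.2}$(ii)$ together with openness spreads discreteness over all of $S$. The paper leaves all of this as a one-line remark, so your write-up simply supplies the same argument in full detail.
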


A Tychonoff space $X$ is called \emph{\v{C}ech complete} if for
every compactification $cX$ of $X$ the remainder $cX\setminus c(X)$
is an $F_\sigma$-set in $cX$~\cite{Engelking1989}. Since every
\v{C}ech complete space (and hence every locally compact space) is
Baire, Corollary~\ref{corollary-3.5} implies the following:

\begin{corollary}\label{corollary-3.6}
Every regular $I$-bisimple countable Hausdorff \v{C}ech complete
(locally compact) topological inverse semigroup is discrete.
\end{corollary}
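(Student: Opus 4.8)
The plan is to reduce this immediately to Corollary~\ref{corollary-3.5}, whose hypotheses require only that the semigroup be regular $I$-bisimple, countable, Hausdorff, Baire, and a topological inverse semigroup. Thus it suffices to recall two purely topological facts: that every \v{C}ech complete Tychonoff space is a Baire space, and that every locally compact Hausdorff space is \v{C}ech complete. The first is classical (see \cite{Engelking1989}), and the second shows that the locally compact case is a special instance of the \v{C}ech complete case, so it is enough to treat the latter.

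Concretely, I would argue as follows. Let $S$ be a regular $I$-bisimple countable Hausdorff \v{C}ech complete topological inverse semigroup. Being \v{C}ech complete, its underlying space is (Tychonoff and) Baire by the fact just recalled. Hence all the hypotheses of Corollary~\ref{corollary-3.5} are met, and we conclude that $S$ is discrete. The locally compact case then follows since a locally compact Hausdorff space is \v{C}ech complete, hence Baire. (If one prefers a slightly more self-contained route, one may instead invoke Corollary~\ref{corollary-3.4}: an $\mathscr{H}$-class is open, hence a Baire space; a countable Hausdorff Baire topological group is discrete, since otherwise no singleton is open and the group becomes a countable union of nowhere dense sets; and by Theorem~\ref{theorem-3.2}$(ii)$ all $\mathscr{H}$-classes are open and homeomorphic, so $S$ is discrete.)

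I expect no genuine obstacle here: the substantive work has already been carried out in Theorem~\ref{theorem-3.2} and Corollaries~\ref{corollary-3.4} and~\ref{corollary-3.5}. The only point deserving a moment's care is the interplay of separation axioms, since \v{C}ech completeness is usually defined only for Tychonoff spaces while a topological inverse semigroup is assumed merely Hausdorff; but \v{C}ech completeness is posited as part of the hypothesis, so the space is in particular Tychonoff and the Baire property applies without further ado.
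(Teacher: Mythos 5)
Your proposal is correct and follows exactly the paper's route: the paper derives Corollary~\ref{corollary-3.6} from Corollary~\ref{corollary-3.5} by noting that every \v{C}ech complete space (and hence every locally compact Hausdorff space) is Baire. Your additional remarks on separation axioms and the alternative route via Corollary~\ref{corollary-3.4} are sound but not needed.
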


The following example implies that there exists a Hausdorff locally
compact zero-dimensional $I$-bisimple topological semigroup $S$ with
locally compact (discrete) maximal subgroup $G$ such that $S$ is not
topologically isomorphic to $\mathscr{B}_W=\mathbb{Z}\times
G\times\mathbb{Z}$ with the product topology and hence $S$ is not a
topological inverse semigroup.

\begin{example}\label{example-3.7}
Let $Z$ be the additive group of integers and $\theta\colon
Z\rightarrow Z$ be an annihilating homomorphism, i.e., $\theta(m)=e$
is the unity of $Z$ for every $m\in Z$. We put
$\mathscr{B}(Z,\mathbb{Z})$ be the $\mathbb{Z}$-Bruck extension of
the group $Z$. Then Theorem~\ref{theorem-3.1} implies that
$\mathscr{B}(Z,\mathbb{Z})$ is an $I$-bisimple semigroup.

We determine the topology $\tau$ on $\mathscr{B}(Z,\mathbb{Z})$ in
the following way:
\begin{itemize}
  \item[$(i)$] all non-idempotent elements of the semigroup
   $\mathscr{B}(Z,\mathbb{Z})$ are isolated points in
   $\left(\mathscr{B}(Z,\mathbb{Z}),\tau\right)$; \qquad and
  \item[$(ii)$] the family $\mathfrak{B}_{(i,e,j)}=
   \big\{U^n_{i,j}\colon i,j\in\mathbb{Z}, n\in\mathbb{Z}\big\}$,
   where $U^n_{i,j}=\{(i,e,j)\}\cup\{(i-1,k,j-1)\colon k\geqslant
   n\}$, is a base of the topology $\tau$ at the point
   $(i,e,j)\in\mathscr{B}(Z,\mathbb{Z})$, $i,j\in\mathbb{Z}$.
\end{itemize}

Simple verifications show that $\tau$ is a Hausdorff locally compact
zero-dimensional topology on $\mathscr{B}(Z,\mathbb{Z})$. Later we
shall prove that $\tau$ is a semigroup topology on
$\mathscr{B}(Z,\mathbb{Z})$.

We remark that the semigroup operation on
$\mathscr{B}(Z,\mathbb{Z})$ is defined by the formula
\begin{equation*}
    (i,g,j)\cdot(m,h,n)=
\left\{
  \begin{array}{ll}
    (i-j+m,h,n),        & \hbox{if }~j<m; \\
    (i,g\cdot h,n),     & \hbox{if }~j=m; \\
    (i,g,n-m+j),        & \hbox{if }~j>m,\\
  \end{array}
\right.
\end{equation*}
for arbitrary $i,j,m,n\in\mathbb{Z}$ and $g,h\in Z$. Since all
non-idempotent elements of the semigroup $\mathscr{B}(Z,\mathbb{Z})$
are isolated points in
$\left(\mathscr{B}(Z,\mathbb{Z}),\tau\right)$, it is sufficient to
show that the semigroup operation on
$\left(\mathscr{B}(Z,\mathbb{Z}),\tau\right)$ is continuous in the
following cases:
\begin{equation*}
    {\textbf{a)}}~(i,g,j)\cdot(m,e,n); \qquad
    {\textbf{b)}}~(i,e,j)\cdot(m,g,n); \qquad
    {\textbf{c)}}~(i,e,j)\cdot(m,e,n),
\end{equation*}
where $e$ is the unity of $G$ and $g\in G\setminus\{e\}$.

Then we have that in case \textbf{a)}:
\begin{itemize}
  \item[$(1)$] if $j{<}m{-}1$ then $(i,g,j){\cdot}(m,e,n){=}(i{-}j{+}m,e,n)$ and
   $\{(i,g,j)\}{\cdot} U_{m,n}^k\subseteq U_{i-j+m,n}^{k}$;

  \item[$(2)$] if $j{=}m{-}1$ then $(i,g,j){\cdot}(m,e,n){=}(i{+}1,e,n)$ and
   $\{(i,g,j)\}{\cdot} U_{m,n}^k\subseteq U_{i+1,n}^{k+g}$;

  \item[$(3)$] if $j\geqslant m$ then $(i,g,j)\cdot(m,e,n)=(i,g,n-m+j)$ and
   $\{(i,g,j)\}\cdot U_{m,n}^k\subseteq \{(i,g,n-m+j)\}$,
\end{itemize}
in case \textbf{b)}:
\begin{itemize}
  \item[$(1)$] if $j\leqslant m$ then
   $(i,e,j)\cdot(m,g,n)=(i-j+m,g,n)$ and
   $U_{i,j}^k\cdot\{(m,g,n)\} \subseteq \{(i-j+m,g,n)\}$;

  \item[$(2)$] if $j=m+1$ then $(i,e,j)\cdot(m,g,n)=(i,e,n+1)$ and
   $U_{i,j}^k\cdot\{(m,g,n)\} \subseteq U_{i,n+1}^{k+g}$;

  \item[$(2)$] if $j{>}m{+}1$ then $(i,e,j){\cdot}(m,g,n){=}(i,e,n-m+j)$ and
   $U_{i,j}^k\cdot\{(m,g,n)\} \subseteq U_{i,n-m+j}^{k}$,
\end{itemize}
and in case \textbf{c)}:
\begin{itemize}
  \item[$(1)$] if $j<m$ then
   $(i,e,j)\cdot(m,e,n)=(i-j+m,e,n)$ and
   $U_{i,j}^k\cdot U_{m,n}^l\subseteq U^l_{i-j+m,n}$;
  \item[$(2)$] if $j=m$ then $(i,e,j)\cdot(m,e,n)=(i,e,n)$ and
   $U_{i,j}^k\cdot U_{m,n}^l \subseteq U_{i,n }^{k+l}$;
  \item[$(3)$] if $j>m$ then $(i,e,j)\cdot(m,e,n)=(i,e,n-m+j)$ and
   $U_{i,j}^k\cdot U_{m,n}^l\subseteq U^k_{i,n-m+j}$,
\end{itemize}
for arbitrary integers $k$ and $l$. Hence
$\left(\mathscr{B}(Z,\mathbb{Z}),\tau\right)$ is a topological
semigroup. It is obvious that the inversion in
$\left(\mathscr{B}(Z,\mathbb{Z}),\tau\right)$ is not continuous.
\end{example}

\begin{remark}\label{remark-3.8}
\begin{itemize}
  \item[$(1)$] We observe that the similar propositions to
   Theorems~\ref{theorem-3.2} and \ref{theorem-3.3},
   Corollaries~\ref{corollary-3.4}, \ref{corollary-3.5} and
   \ref{corollary-3.6} hold for $\omega$-bisimple (semi)topological
   semigroups as topological Bruck-Reilly extensions.

  \item[$(2)$] Also Example~\ref{example-3.7} shows that there
   exists a Hausdorff locally compact zero-dimensional
   $\omega$-bisimple topological semigroup $S$ with a locally compact
   (discrete) maximal subgroup $G$ such that $S$ is not
   topologically isomorphic to the Bruck-Reilly extension
   with the product topology and hence $S$ is not
   a topological inverse semigroup.

  \item[$(3)$] The statement of Theorem~\ref{theorem-3.3} is true in the case when the subsemigroup $C(S)=\{(i,g,i)\colon i\in\mathbb{Z} \hbox{ and } g\in G\}$ is weakly uniform (the definition of a weakly uniform topological semigroup see in \cite{Shneperman1968}). In this case we have that inversion in $C(S)$ is continuous (see \cite{Demenchuk1970} and \cite{Demenchuk1971}) and hence by Proposition~\ref{proposition-2.3} we get that  every $\mathscr{H}$-class of $S$ is an open-and-closed subset of $S$. This implies that the inversion in $S$ is continuous, too.
\end{itemize}
\end{remark}

The following example implies that there exists a Hausdorff locally compact zero-dimensional $I$-bisimple semitopological semigroup $S$ with continuous inversion and locally compact (discrete) maximal subgroup $G$ such that $S$ is not
topologically isomorphic to $\mathscr{B}_W=\mathbb{Z}\times G\times\mathbb{Z}$ with the product topology and hence $S$ is not a topological inverse semigroup.

\begin{example}\label{example-3.9}
Let $Z$ be the additive group of integers and $\theta\colon Z\rightarrow Z$ be an annihilating homomorphism.

We determine the topology $\tau$ on $\mathscr{B}(Z,\mathbb{Z})$ in
the following way:
\begin{itemize}
  \item[$(i)$] all non-idempotent elements of the semigroup
   $\mathscr{B}(Z,\mathbb{Z})$ are isolated points in
   $\left(\mathscr{B}(Z,\mathbb{Z}),\tau\right)$; \qquad and

  \item[$(ii)$] the family $\mathfrak{B}_{(i,e,j)}=
   \big\{U^{m,n}_{i,j}\colon i,j\in\mathbb{Z}, m,n\in\mathbb{Z}\big\}$,
   where
 \begin{equation*}
   U^{m,n}_{i,j}=\{(i,e,j)\}\cup\{(i-1,k,j-1)\colon k\leqslant-n\}\cup \{(i-1,k,j-1) \colon k\geqslant n\},
 \end{equation*}
  is a base of the topology $\tau$ at the point
   $(i,e,j)\in\mathscr{B}(Z,\mathbb{Z})$, $i,j\in\mathbb{Z}$.
\end{itemize}

Simple verifications show that $\tau$ is a Hausdorff locally compact
zero-dimensional topology on $\mathscr{B}(Z,\mathbb{Z})$. The proof of the separate continuity of semigroup operation and the continuity of inversion in
$(\mathscr{B}(Z,\mathbb{Z}),\tau)$ is similar to Example~\ref{example-3.7}.
\end{example}

\begin{remark}\label{remark-3.10}
Example~\ref{example-3.9} shows that there exists a Hausdorff locally compact zero-dimensional $\omega$-bisimple semitopological semigroup $S$ with continuous inversion and a locally compact (discrete) maximal subgroup $G$ such that $S$ is not
topologically isomorphic to the Bruck-Reilly extension with the product topology and hence $S$ is not a topological inverse semigroup.
\end{remark}

\section*{Acknowledgements}

The research of the third-named author was carried out with the
support of the Estonian Science Foundation and co-funded by Marie
Curie Action, grant ERMOS36.

The authors are grateful to the referee for several useful comments
and suggestions.

\end{document}